\newtheorem*{cor}{Corollary}
\newtheorem*{lem}{Lemma}
\newtheorem*{prop}{Proposition}
\newtheorem*{thm}{Theorem}
\newtheorem{theorem}{Theorem}
\theoremstyle{definition}
\newtheorem*{defn}{Definition}
\theoremstyle{definition}
\newtheorem*{rem}{Remark}
\newenvironment{pf}{\proof}{\endproof}
\newcounter{cnt}
\newenvironment{enumerit}{\begin{list}{{\hfill\rm(\roman{cnt})\hfill}}{%
\settowidth{\labelwidth}{{\rm(iv)}}\leftmargin=\labelwidth%
\advance\leftmargin by \labelsep\rightmargin=0pt\usecounter{cnt}}}{\end{list}} \makeatletter
\def\mydggeometry{\makeatletter\dg@YGRID=1\dg@XGRID=20\unitlength=0.003pt\makeatother}
\makeatother \theoremstyle{remark}
\numberwithin{equation}{section}
\let\bwdg\bigwedge
\def\bigwedge{{\textstyle\bwdg}}
\begin{document}

\newcommand{\thmref}[1]{Theorem~\ref{#1}}
\newcommand{\secref}[1]{Section~\ref{#1}}
\newcommand{\lemref}[1]{Lemma~\ref{#1}}
\newcommand{\propref}[1]{Proposition~\ref{#1}}
\newcommand{\corref}[1]{Corollary~\ref{#1}}
\newcommand{\remref}[1]{Remark~\ref{#1}}
\newcommand{\defref}[1]{Definition~\ref{#1}}
\newcommand{\er}[1]{(\ref{#1})}

\newcommand{\wt}{\operatorname{wt}}
\newcommand{\conv}{\operatorname{conv}}
\newcommand{\cone}{\operatorname{cone}}
\newcommand{\ad}{\operatorname{ad}}
\newcommand{\Fin}{\operatorname{Fin}}
\newcommand{\argmax}{\operatorname{argmax}}
\newcommand{\supp}{\operatorname{supp}}
\newcommand{\relint}{\operatorname{relint}}

\newcommand{\zz}{\mathbb{Z}_+}
\newcommand{\V}{\mathbb{V}}
\newcommand{\vla}{\mathbb{V}^\lambda}
\newcommand{\F}{\mathbb{F}}
\newcommand{\R}{\mathbb{R}}
\newcommand{\fz}{\mathcal{F}_{\mathbb{Z}}}
\newcommand{\ff}{\mathcal{F}_{\mathbb{F}}}
\newcommand{\fr}{\mathcal{F}_{\mathbb{R}}}
\newcommand{\rz}{\mathcal{R}_{\mathbb{Z}}}
\newcommand{\rf}{\mathcal{R}_{\mathbb{F}}}
\newcommand{\liehr}{\mathfrak{h}_{\mathbb{R}}}
\newcommand{\wtvla}[1]{\wt V_{#1}(\lambda)}
\newcommand{\rhovla}[1]{\rho_{\lambda,{#1}}}

\newcommand{\comm}[1]{}

\newcommand{\tensor}{\otimes}
\newcommand{\from}{\leftarrow}
\newcommand{\disp}{\mathbf{v}}
\newcommand{\nc}{\newcommand}
\newcommand{\rnc}{\renewcommand}
\newcommand{\dist}{\operatorname{dist}}
\newcommand{\id}{\operatorname{id}}
\newcommand{\ord}{\operatorname{\emph{ord}}}
\newcommand{\sgn}{\operatorname{sgn}}
\newcommand{\gldim}{\operatorname{gl.dim}}
\newcommand{\Irr}{\operatorname{Irr}}
\newcommand{\Ht}{\operatorname{ht}}
\newcommand{\qbinom}[2]{\genfrac[]{0pt}0{#1}{#2}}
\nc{\cal}{\mathcal} \nc{\goth}{\mathfrak} \rnc{\bold}{\mathbf}
\renewcommand{\frak}{\mathfrak}
\renewcommand{\Bbb}{\mathbb}
\nc\bomega{{\mbox{\boldmath $\omega$}}} \nc\bpsi{{\mbox{\boldmath $\Psi$}}}
 \nc\balpha{{\mbox{\boldmath $\alpha$}}}
 \nc\bpi{{\mbox{\boldmath $\pi$}}}
\nc\bsigma{{\mbox{\boldmath $\sigma$}}} \nc\bcN{{\mbox{\boldmath $\cal{N}$}}} \nc\bcm{{\mbox{\boldmath $\cal{M}$}}} \nc\bLambda{{\mbox{\boldmath
$\Lambda$}}}

\newcommand{\lie}[1]{\mathfrak{#1}}
\makeatletter
\def\section{\def\@secnumfont{\mdseries}\@startsection{section}{1}%
  \z@{.7\linespacing\@plus\linespacing}{.5\linespacing}%
  {\normalfont\scshape\centering}}
\def\subsection{\def\@secnumfont{\bfseries}\@startsection{subsection}{2}%
  {\parindent}{.5\linespacing\@plus.7\linespacing}{-.5em}%
  {\normalfont\bfseries}}
\makeatother
\def\subl#1{\subsection{}\label{#1}}
 \nc{\Hom}{\operatorname{Hom}}
  \nc{\mode}{\operatorname{mod}}
\nc{\End}{\operatorname{End}} \nc{\wh}[1]{\widehat{#1}} \nc{\Ext}{\operatorname{Ext}} \nc{\ch}{\text{ch}} \nc{\ev}{\operatorname{ev}}
\nc{\Ob}{\operatorname{Ob}} \nc{\soc}{\operatorname{soc}} \nc{\rad}{\operatorname{rad}} \nc{\head}{\operatorname{head}}
\def\Im{\operatorname{Im}}
\def\gr{\operatorname{gr}}
\def\mult{\operatorname{mult}}
\def\Max{\operatorname{Max}}
\def\ann{\operatorname{Ann}}
\def\sym{\operatorname{sym}}
\def\Res{\operatorname{Res}}
\def\conv{\operatorname{conv}}
\def\und{\underline}
\def\Lietg{$A_k(\lie{g})(\bsigma,r)$}

 \nc{\Cal}{\cal} \nc{\Xp}[1]{X^+(#1)} \nc{\Xm}[1]{X^-(#1)}
\nc{\on}{\operatorname} \nc{\Z}{{\bold Z}} \nc{\J}{{\cal J}} \nc{\C}{{\bold C}} \nc{\Q}{{\bold Q}}
\renewcommand{\P}{{\cal P}}
\nc{\N}{{\Bbb N}} \nc\boa{\bold a} \nc\bob{\bold b} \nc\boc{\bold c} \nc\bod{\bold d} \nc\boe{\bold e} \nc\bof{\bold f} \nc\bog{\bold g}
\nc\boh{\bold h} \nc\boi{\bold i} \nc\boj{\bold j} \nc\bok{\bold k} \nc\bol{\bold l} \nc\bom{\bold m} \nc\bon{\bold n} \nc\boo{\bold o}
\nc\bop{\bold p} \nc\boq{\bold q} \nc\bor{\bold r} \nc\bos{\bold s} \nc\boT{\bold t} \nc\boF{\bold F} \nc\bou{\bold u} \nc\bov{\bold v}
\nc\bow{\bold w} \nc\boz{\bold z} \nc\boy{\bold y} \nc\ba{\bold A} \nc\bb{\bold B} \nc\bc{\bold C} \nc\bd{\bold D} \nc\be{\bold E} \nc\bg{\bold
G} \nc\bh{\bold H} \nc\bi{\bold I} \nc\bj{\bold J} \nc\bk{\bold K} \nc\bl{\bold L} \nc\bm{\bold M} \nc\bn{\bold N} \nc\bo{\bold O} \nc\bp{\bold
P} \nc\bq{\bold Q} \nc\br{\bold R} \nc\bs{\bold S} \nc\bt{\bold T} \nc\bu{\bold U} \nc\bv{\bold V} \nc\bw{\bold W} \nc\bz{\bold Z} \nc\bx{\bold
x} \nc\KR{\bold{KR}} \nc\rk{\bold{rk}} \nc\het{\text{ht }}

\nc\toa{\tilde a} \nc\tob{\tilde b} \nc\toc{\tilde c} \nc\tod{\tilde d} \nc\toe{\tilde e} \nc\tof{\tilde f} \nc\tog{\tilde g} \nc\toh{\tilde h}
\nc\toi{\tilde i} \nc\toj{\tilde j} \nc\tok{\tilde k} \nc\tol{\tilde l} \nc\tom{\tilde m} \nc\ton{\tilde n} \nc\too{\tilde o} \nc\toq{\tilde q}
\nc\tor{\tilde r} \nc\tos{\tilde s} \nc\toT{\tilde t} \nc\tou{\tilde u} \nc\tov{\tilde v} \nc\tow{\tilde w} \nc\toz{\tilde z}

\title[Faces of weight polytopes and a generalization of a theorem of
Vinberg]{Faces of weight polytopes and a generalization of a theorem of
Vinberg}

\author{Apoorva Khare}
\email[A.~Khare]{\tt apoorva.khare@yale.edu}
\address{Department of Mathematics, Yale University, New Haven, CT}

\author{Tim Ridenour}
\email[T.~Ridenour]{\tt tbr4@math.northwestern.edu}
\address{Department of Mathematics, Northwestern University, Evanston,
IL}

\date{\today}
\subjclass[2000]{Primary: 17B20; Secondary: 17B10}
\keywords{Weak $\F$-face, positive weak $\F$-face, generalized Verma
module, polyhedron}


\begin{abstract}

The paper is motivated by the study of graded representations of Takiff
algebras, cominuscule parabolics, and their generalizations. We study
certain special subsets of the set of weights (and of their convex hull)
of the generalized Verma modules (or GVM's) of a semisimple Lie algebra
$\lie g$. In particular, we extend a result of Vinberg and classify the
faces of the convex hull of the weights of a GVM.  When the GVM is
finite-dimensional, we answer a natural question that arises out of
Vinberg's result: when are two faces the same?

We also extend the notion of interiors and faces to an arbitrary subfield
$\F$ of the real numbers, and introduce the idea of a weak $\F$--face of
any subset of Euclidean space. We classify the weak $\F$--faces of all
lattice polytopes, as well as of the set of lattice points in them. We
show that a weak $\F$--face of the weights of a finite-dimensional $\lie
g$--module is precisely the set of weights lying on a face of the convex
hull.
\end{abstract}
\maketitle

\section{Introduction}

In this note, we study the faces of the convex hull of the weights of a
highest weight representation $V$  of a complex semisimple Lie algebra
$\lie g$. The classification of the faces in the case when $V$ is a
simple finite-dimensional representation of $\lie g$ had been obtained by
Vinberg \cite{Vin}. Roughly speaking, his result states that a face of
the weight polytope of a simple finite-dimensional representation is
determined by a pair consisting of an element of the Weyl group and a
subset of the set of simple roots.
Our results extend (and recover) those of Vinberg's for arbitrary
generalized Verma modules. Our methods, however, are completely different
and rely on algebra and convexity theory. In particular, we are able to
work with convex linear combinations of the weights, where the
coefficients are in an arbitrary subfield of the real numbers.
We are also able to answer a natural question arising from Vinberg's
result: namely, when do two different pairs give rise to the same face of
the weight polytope of a finite-dimensional simple Lie algebra.

This paper was motivated by the results in \cite{CG2} (which are further
extended in \cite{CKR}) on representations of Takiff algebras and their
generalizations. In those papers, one showed that one could associate
Koszul algebras in a natural fashion, to certain subsets of the set of
weights of a finite-dimensional representation of a semisimple Lie
algebra. In this paper, we show that the conditions on these subsets is
exactly equivalent to requiring the subset to be the maximal subset of
weights contained in a face. This description generalizes and makes
uniform the results of \cite{CDR}, where the case of the adjoint
representation was analyzed.

\subsection*{Organization}

The paper is organized as follows. In \secref{S2}, we study generalized
Verma modules. These are a family of highest weight $\lie g$-modules,
that run from all Verma modules at one end, to all finite-dimensional
simple modules at the other. The convex hull of their set of weights
turns out always to be a polyhedron, and our main goal in this section is
to classify their faces, in terms of describing the vertices and the
extremal rays. This generalizes Vinberg's result from \cite{Vin}.

For the rest of the paper, we focus on finite-dimensional $\lie
g$-modules $V$. We wish to study the subsets of weights of $V$, which lie
on faces of the convex hull of all weights. To that end, we introduce the
notion of a {\it weak face}, over an arbitrary subfield $\F \subset \R$.
Among these weak $\F$-faces, we then consider {\it positive} weak
$\F$-faces.
In \secref{S3}, we classify the (positive) weak $\F$-faces of $V$. This
generalizes results from \cite{CDR,CG2}, which addressed the example of
$V = \lie g$.

In \secref{S4}, we study (positive) weak $\F$-faces of arbitrary subsets
$X \subset \R^n$. Our main results here concern the case when the convex
hull of $X$ is a polyhedron. In this case, the (positive) weak $\F$-faces
are precisely the elements of $X$ that lie on a proper face of the
polyhedron - in other words, that maximize a linear functional, with
finite (positive) maximum.

Finally, in \secref{S5}, we prove our results from \secref{S3}, using the
techniques developed in \secref{S4}.

\subsection*{Acknowledgements}
The authors are especially grateful to Vyjayanthi Chari for extremely
valuable discussions and her many inputs and suggestions, that helped
bring this manuscript to its present form. The first author would also
like to thank Michel Brion, Shrawan Kumar, and Olivier Mathieu for
valuable discussions.

\section{Results on generalized Verma modules}\label{S2}

Throughout this paper, we let $\Bbb{R}$ (respectively $\Bbb{Q}$,
$\Bbb{Z}$) denote the real numbers (respectively the rationals, and the
integers). For any subset $R \subset \Bbb{R}$, we let $R_+ := R \cap [0,
\infty),\ R_{>0} := R \cap (0, \infty)$. If $A,B \subset V$ are subsets
of an abelian group $(V,+)$, we define their Minkowski sum to be $A+B :=
\{ a+b : a \in A, b \in B \} \subset V$. (If $A = \{ a \}$, we may also
write this as $a+B$.) Similarly, $-B := \{ -b : b \in B \}$.

\subsection{}

Fix a complex semisimple Lie algebra $\lie{g}$ of rank $n$ and a Cartan
subalgebra $\lie h$ of $\lie g$, and let $\Phi \subset \lie h^*$ be the
set of roots of $\lie g$ with respect to $\lie h$.
Set $I = \{ 1, \cdots, n \}$ and fix a set $\{ \alpha_i : i \in I \}$ of
simple roots.
Denote by $\Phi^+$ the corresponding set of positive roots.
Let $\kappa $ be the Killing form on $\lie{g}$; recall that its
restriction to $\lie h$ induces a positive definite inner product $(\ ,\
)$ on the real span $\liehr^*$ of $\Phi^+$.
Let $\{ \omega_i : i \in I \}$ be the basis of $\lie h^*$ which satisfies
$2 (\alpha_i, \omega_j) = \delta_{i,j}(\alpha_i,\alpha_i)$. Since the
Killing form is nondegenerate, it induces an identification of $\liehr$
with $\liehr^*$. Define $h_{\alpha_i} \in \liehr$ to be the vector
identified with $2 \alpha_i / (\alpha_i, \alpha_i)$; these vectors form
an $\R$-basis of $\liehr$.

The root lattice $Q$ (respectively, weight lattice $P$) is the integer
span of the simple roots $\alpha_i$ (respectively, fundamental weights
$\omega_i$), while $Q^+$ (respectively, $P^+$) is the $\Bbb Z_+$-span of
the simple roots (respectively, fundamental weights).
Given a subset $J$ of $I$, let $Q_J$ (respectively $P_J$) be the $\Bbb
Z$-span of the simple roots $\{ \alpha_j : j \in J \}$ (respectively,
the fundamental weights $\{ \omega_j : j \in J \}$), and set $\Phi_J^+ :=
\Phi^+ \cap Q_J,\ P_J^+ := P^+ \cap P_J,\ Q_J^+ := Q^+ \cap Q_J$.

Given any $\lambda \in \lie h^*$, say $\lambda = \sum_{i \in I} r_i
\omega_i$ with all $r_i \in \Bbb R$, we set
$$\supp \lambda := \{ i \in I : r_i \ne 0 \}, \quad J_\lambda := \{ i \in
I: \lambda(h_{\alpha_i}) \in \zz \}.$$

\noindent Clearly, $\lambda \in P^+$ if and only if $J_\lambda = I$.
Finally, let  $W$ be the Weyl group of $\Phi$, namely the subgroup of
$\rm{Aut}(\liehr^*)$ generated by the simple reflections $\{ s_i : i \in
I \}$. Note that the inner product  $(\ ,\ )$ on $\liehr^*$ is
$W$-invariant.

\subsection{}

Fix a Chevalley basis $\{ x_\alpha^\pm, h_i = h_{\alpha_i} : \alpha\in
\Phi^+, 1 \leq i \leq n \}$ of $\lie g$, set $\lie n^\pm =
\bigoplus_{\alpha \in \Phi^+} \Bbb C x_\alpha^\pm$, and write
$$\lie g=\lie n^- \oplus \lie h \oplus \lie n^+.$$

\noindent The subalgebras $\lie n^\pm_J$ are defined in the obvious way.
Let $\lie{p}_J$ be the parabolic Lie subalgebra of $\lie{g}$, defined as
follows:
$$\lie p_J = \lie n^-_J \oplus \lie h \oplus \lie n^+, \quad \lie m_J =
\lie n^-_J \oplus \lie h\oplus\lie n^+_J, \quad \lie u^\pm_J =
\bigoplus_{\alpha \in \Phi^+ \setminus \Phi^+_J} \Bbb C x^\pm_\alpha,$$

\noindent where $\lie m_J$ is reductive with semisimple part $\lie g_J$,
and $\lie u^\pm_J$ is nilpotent.
The subgroup $W_J$ of $W$ generated by $\{ s_j : j \in J \}$ is the Weyl
group of $\lie g_J$, and we set $\rho_J = \sum_{j \in J} \omega_j$.
The following is standard, but we isolate it in the form of a Lemma,
since it is used frequently in the paper.

\begin{lem}\label{L22}
For $w \in W_J$ and $i \notin J$, we have $w \alpha_i \in \Phi^+$, and
hence $w \alpha \in \Phi^+$ for all $\alpha \in \Phi^+ \setminus
\Phi^+_J$.
\end{lem}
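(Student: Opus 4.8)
The plan is to reduce everything to the single‑root statement "$w\alpha_i\in\Phi^+$ for $w\in W_J$ and $i\notin J$", since the second assertion follows from the first by writing $\alpha\in\Phi^+\setminus\Phi^+_J$ in the basis of simple roots: such an $\alpha$ must have a strictly positive coefficient on some $\alpha_i$ with $i\notin J$ (otherwise $\alpha\in Q_J^+$, forcing $\alpha\in\Phi^+_J$). Since $W_J$ permutes $Q_J$ and fixes the coefficients outside $J$ appropriately — more precisely, $s_j\alpha = \alpha - \alpha(h_{\alpha_j})\alpha_j$ changes only the $\alpha_j$‑coordinate — any $w\in W_J$ leaves the $\alpha_i$‑coordinate of $\alpha$ unchanged for every $i\notin J$. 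Hence $w\alpha$ still has a positive coefficient on some simple root, so $w\alpha\in\Phi^+$ (a root is positive iff all its simple‑root coefficients are $\ge 0$, and it cannot be $0$). So the crux is the first claim.

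For the first claim I would induct on the length $\ell(w)$ of $w$ as an element of $W_J$ (using the generators $\{s_j:j\in J\}$). The base case $w=\mathrm{id}$ is trivial. For the inductive step write $w = s_j w'$ with $j\in J$ and $\ell(w')=\ell(w)-1$; by induction $w'\alpha_i\in\Phi^+$, say $w'\alpha_i=\sum_{k\in I}c_k\alpha_k$ with all $c_k\ge 0$. Applying $s_j$ changes only the coefficient $c_j$, so all coefficients $c_k$ with $k\ne j$ are unchanged and remain $\ge 0$. As observed above, $w'\alpha_i$ has $c_i>0$ (the $\alpha_i$‑coefficient is never touched by elements of $W_J$ since $i\notin J$), so $w\alpha_i$ still has a strictly positive coefficient on $\alpha_i$. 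A nonzero element of $Q$ with all coefficients of one sign and at least one strictly positive is a positive root precisely when it lies in $\Phi$; since $w\alpha_i$ is a root and has a positive coefficient, it cannot have any negative coefficient, hence $w\alpha_i\in\Phi^+$.

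Alternatively — and perhaps more cleanly — one avoids induction entirely: it is a standard fact that for $w\in W_J$, the set of positive roots sent to negative roots by $w^{-1}$ is contained in $\Phi^+_J$ (this is the statement that $W_J$, acting on $\Phi_J$, behaves like the Weyl group of $\lie g_J$, together with the fact that $W_J$ stabilizes $\mathbb{Z}\text{-span}(\alpha_k:k\notin J)$ modulo $Q_J$). Equivalently, $w^{-1}(\Phi^+\setminus\Phi^+_J)\subseteq\Phi^+$, which applied to $w^{-1}$ in place of $w$ gives exactly the Lemma. I would cite this from a standard reference on root systems (e.g. Bourbaki or Humphreys) if a self‑contained argument is not desired; otherwise the length induction above is fully elementary.

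The main obstacle — though it is minor — is making rigorous the repeatedly used fact that an element of $W_J$ changes only the coordinates indexed by $J$ when a root is expanded in the simple‑root basis. This is immediate from the formula $s_j\mu=\mu-\mu(h_{\alpha_j})\alpha_j$ and the fact that products of such reflections only ever add multiples of $\{\alpha_j:j\in J\}$; once this is in hand, the positivity of the $\alpha_i$‑coefficient ($i\notin J$) is preserved and everything follows. No serious difficulty is expected.
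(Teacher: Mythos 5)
Your proof is correct. The paper itself offers no proof of this lemma---it states it as ``standard'' and moves on---so there is no argument to compare against. The essential observation, which you make, is that for $w \in W_J$ one has $w\mu - \mu \in \mathbb{R}\text{-span}\{\alpha_j : j \in J\}$ for every $\mu \in \liehr^*$ (immediate from $s_j\mu - \mu = -\mu(h_{\alpha_j})\alpha_j$), so the simple-root coefficients of $w\mu$ indexed by $I \setminus J$ coincide with those of $\mu$. Taking $\mu = \alpha_i$ with $i \notin J$, the coefficient of $\alpha_i$ in $w\alpha_i$ remains $1 > 0$; since $w\alpha_i$ is a root and roots have all simple-root coefficients of one sign, $w\alpha_i \in \Phi^+$. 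Taking $\mu = \alpha \in \Phi^+ \setminus \Phi^+_J$, the definition $\Phi^+_J = \Phi^+ \cap Q_J$ guarantees that $\alpha$ has a strictly positive coefficient on some $\alpha_i$ with $i \notin J$, and the same sign argument applies. The length induction you set up is harmless but unnecessary---the direct coefficient argument already handles the case $\mu = \alpha_i$ without it. The alternative route you mention (the inversion set of any $w \in W_J$ is contained in $\Phi^+_J$) is indeed the standard fact one would cite from Bourbaki or Humphreys if a self-contained argument were not wanted.
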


Given any Lie algebra $\lie a$, we let $\bu(\lie a)$ be the universal
enveloping algebra of $\lie a$. The Poincare--Birkhoff--Witt theorem
gives us an isomorphism of vector spaces:
$$\bu(\lie g) \cong \bu(\lie n^-) \otimes \bu(\lie h) \otimes \bu(\lie
n^+).$$

\subsection{}

We now recall the definition and elementary properties of the {\it
generalized Verma modules}. Recall that a weight module $V$ for a
reductive Lie algebra $\lie a $ with Cartan subalgebra $\lie t$ is one
which has a decomposition
$$V = \bigoplus_{\mu \in \lie t^*} V_\mu,$$

\noindent where $V_\mu = \{v \in V : hv = \mu(h)v,\ \ \forall\ \ h \in
\lie t\}$. We set $\wt V = \{\mu \in \lie t^*: V_\mu \ne 0\}$.

Given $\lambda \in \lie h^*$ and $J\subset J_\lambda$, the generalized
Verma module $M(\lambda,J)$ is the $\lie g$-module generated by an
element $m_\lambda$ with defining relations:
$$\lie n^+ m_\lambda = 0, \quad h m_\lambda = \lambda(h) m_\lambda, \quad
(x^-_\alpha)^{\lambda(h_\alpha)+1} m_\lambda=0,$$

\noindent for all $h \in \lie h$ and $\alpha \in \Phi^+_J$. The following
is standard - see \cite{Kumar}:

\begin{prop}\label{elemgen}
Let $\lambda \in \lie h^*$ and $J \subset J_\lambda$.
\begin{enumerit}
\item The $\lie g$-module $M(\lambda,J)$ is a free $\bu(\lie
u_J^-)$-module, and $\dim \bu(\lie m_J) m_\lambda < \infty$. In
particular, $\wt(\bu(\lie m_J) m_\lambda)$ is a finite subset of $\lie
h^*$, and
$$\wt M(\lambda, J) = \wt (\bu(\lie m_J) m_\lambda) - \left\{
\sum_{\alpha \notin \Phi^+_J} r_\alpha \alpha : r_\alpha \in \Bbb Z_+
\right\}.$$

\item The set $\wt M(\lambda, J)$ is $W_J$-invariant.
\end{enumerit}
\end{prop}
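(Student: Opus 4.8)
The plan is to realize $M(\lambda,J)$ as a module parabolically induced from $\lie p_J$, and then read off both statements from the Poincar\'e--Birkhoff--Witt (PBW) theorem. Since $J \subseteq J_\lambda$, the functional $\lambda$ restricts to a dominant integral weight of $\lie g_J$, so there is a finite-dimensional simple $\lie m_J$-module $V_J(\lambda)$ with a highest weight vector $v_\lambda$ that is killed by $\lie n^+_J$ and on which $\lie h$ acts by $\lambda$; regard $V_J(\lambda)$ as a $\lie p_J$-module by letting $\lie u^+_J$ (an ideal of $\lie p_J$) act by $0$. I claim $M(\lambda,J) \cong \bu(\lie g)\otimes_{\bu(\lie p_J)} V_J(\lambda)$, and I would prove it by producing surjections in both directions that fix the cyclic generators. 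For the map $M(\lambda,J) \twoheadrightarrow \bu(\lie g)\otimes_{\bu(\lie p_J)} V_J(\lambda)$: the element $1 \otimes v_\lambda$ of the induced module satisfies all the defining relations of $M(\lambda,J)$, since $\lie n^+ = \lie n^+_J \oplus \lie u^+_J$ kills $v_\lambda$, $\lie h$ acts by $\lambda$, and $(x^-_\alpha)^{\lambda(h_\alpha)+1} v_\lambda = 0$ for $\alpha \in \Phi^+_J$ (the Serre-type relations inside the finite-dimensional module $V_J(\lambda)$). For the reverse map: the cyclic submodule $\bu(\lie m_J)m_\lambda$ of $M(\lambda,J)$ is a highest weight $\lie g_J$-module of highest weight $\lambda|_{\lie g_J}$ satisfying $(x^-_{\alpha_j})^{\lambda(h_{\alpha_j})+1}m_\lambda = 0$, hence is a quotient of $V_J(\lambda)$; moreover a short induction on the length of a monomial $y_1 \cdots y_k m_\lambda$ with $y_i \in \lie m_J$, using only $\lie u^+_J m_\lambda = 0$ and $[\lie m_J,\lie u^+_J] \subseteq \lie u^+_J$, shows that $\lie u^+_J$ annihilates all of $\bu(\lie m_J)m_\lambda$, so $v_\lambda \mapsto m_\lambda$ is a $\lie p_J$-module map $V_J(\lambda) \to M(\lambda,J)$, which by Frobenius reciprocity induces the desired $\lie g$-map (onto because $m_\lambda$ generates $M(\lambda,J)$). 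As both composites fix the generators $1 \otimes v_\lambda$ and $m_\lambda$, the two maps are mutually inverse; in particular $\bu(\lie m_J)m_\lambda \cong V_J(\lambda)$ is finite-dimensional, has $W_J$-invariant weight set by Weyl's complete reducibility for $\lie g_J$, and is annihilated by $\lie u^+_J$.

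With the identification in hand I invoke PBW. The subspace $\lie u^-_J$ is a Lie subalgebra of $\lie g$: if $\alpha,\beta \in \Phi^+ \setminus \Phi^+_J$ and $\alpha+\beta \in \Phi$ then $\alpha+\beta \in \Phi^+$ and $\supp(\alpha+\beta) = \supp\alpha \cup \supp\beta \not\subseteq J$, so $\alpha+\beta \notin \Phi^+_J$. Since $\lie g = \lie u^-_J \oplus \lie p_J$, the algebra $\bu(\lie g)$ is free as a right $\bu(\lie p_J)$-module on a PBW monomial basis of $\bu(\lie u^-_J)$, and therefore $M(\lambda,J) \cong \bu(\lie u^-_J) \otimes_\C V_J(\lambda)$ as left $\bu(\lie u^-_J)$-modules (in particular free) and, tracking the adjoint $\lie h$-action on the first tensor factor, as $\lie h$-modules whose weights are sums of a weight of $\bu(\lie u^-_J)$ and a weight of $V_J(\lambda)$. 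A PBW monomial $\prod_{\alpha \notin \Phi^+_J} (x^-_\alpha)^{r_\alpha}$ has weight $-\sum_{\alpha \notin \Phi^+_J} r_\alpha \alpha$, so $\wt \bu(\lie u^-_J) = \{-\sum_{\alpha \notin \Phi^+_J} r_\alpha \alpha : r_\alpha \in \zz\}$; combining this with $V_J(\lambda) \cong \bu(\lie m_J)m_\lambda$ yields the displayed identity of part (i), and the finiteness assertions follow from $\dim V_J(\lambda) < \infty$.

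For part (ii), write the identity of part (i) as $\wt M(\lambda,J) = \wt(\bu(\lie m_J)m_\lambda) - C$ with $C := \{\sum_{\alpha \notin \Phi^+_J} r_\alpha \alpha : r_\alpha \in \zz\}$; since $W_J$ acts linearly, it suffices to check that $\wt(\bu(\lie m_J)m_\lambda)$ and $C$ are both $W_J$-stable. The former is again Weyl's theorem for the finite-dimensional $\lie g_J$-module $\bu(\lie m_J)m_\lambda$. For $C$: by \lemref{L22}, any $w \in W_J$ sends $\Phi^+ \setminus \Phi^+_J$ into $\Phi^+$, and since $w\alpha_i - \alpha_i \in Q_J$ for $i \notin J$, the coefficients of $w\alpha$ on the simple roots outside $J$ coincide with those of $\alpha$, so $w\alpha \notin Q_J$; hence $w$ maps $\Phi^+ \setminus \Phi^+_J$ bijectively onto itself and fixes the additive monoid $C$ that it generates. (One can also bypass the explicit formula: for $j \in J$ the operators $x^\pm_{\alpha_j}$ act locally nilpotently on all of $M(\lambda,J)$ --- the set of vectors on which they do is a $\lie g$-submodule containing $m_\lambda$ --- so $\exp(x^-_{\alpha_j})\exp(-x^+_{\alpha_j})\exp(x^-_{\alpha_j})$ is a well-defined invertible operator on $M(\lambda,J)$ carrying the $\mu$-weight space onto the $s_j\mu$-weight space, whence $\wt M(\lambda,J)$ is $s_j$-invariant for all $j \in J$.)

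The step I expect to be the main obstacle is the identification in the first paragraph --- concretely, the finite-dimensionality of $\bu(\lie m_J)m_\lambda$ together with the triviality of the $\lie u^+_J$-action on it. This is exactly where the hypothesis $J \subseteq J_\lambda$ is used, and it relies on the standard but not purely formal fact that a highest weight $\lie g_J$-module of dominant integral highest weight satisfying the relations $(x^-_{\alpha_j})^{\lambda(h_{\alpha_j})+1}v = 0$ is the finite-dimensional simple module. Everything downstream of this is PBW bookkeeping, with no convexity theory involved.
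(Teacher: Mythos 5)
Your argument is correct, and there is no in-paper proof to compare against: the authors simply record this as a standard fact and cite Kumar's book. Your reconstruction via parabolic induction is the textbook route. Briefly: you correctly realize $M(\lambda,J)\cong \bu(\lie g)\otimes_{\bu(\lie p_J)}V_J(\lambda)$ by exhibiting mutually inverse surjections that fix the cyclic generators; the key nonformal input (which you rightly flag) is that the integrability hypothesis $J\subset J_\lambda$ forces $\bu(\lie m_J)m_\lambda$ to be the finite-dimensional simple $\lie m_J$-module, and that $\lie u_J^+$ kills it, which your commutator induction handles cleanly. The PBW step $\bu(\lie g)\cong \bu(\lie u_J^-)\otimes_{\Bbb C}\bu(\lie p_J)$ (using that $\lie u_J^-$ is a subalgebra, which you verify via supports of roots) then gives freeness over $\bu(\lie u_J^-)$ and the weight decomposition in (i). For (ii), your two arguments are both standard and valid: the first reduces to $W_J$-stability of the two summands, with \lemref{L22} plus the observation $w\alpha_i\equiv\alpha_i\pmod{Q_J}$ for $i\notin J$ showing that $W_J$ permutes $\Phi^+\setminus\Phi_J^+$; the parenthetical alternative via local nilpotency of $x^\pm_{\alpha_j}$ and the operator $\exp(x^-_{\alpha_j})\exp(-x^+_{\alpha_j})\exp(x^-_{\alpha_j})$ is the integrable-module argument and is also fine (the set of locally nilpotent vectors is a submodule because $\ad x^\pm_{\alpha_j}$ is nilpotent on $\lie g$). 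No gaps.
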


In the special case when $\lambda\in P^+$, the module $M(\lambda, I)$ is
the irreducible finite-dimensional module with highest weight $\lambda$.

\subsection{}\label{S24}

Given any subset $X$ of $\liehr^*$ we let $\conv_{\Bbb R}(X)$ be the
convex hull of $X$; i.e.,
$$\conv_{\Bbb R}(X)=\left\{\sum_{s=1}^k r_s x_s\ :\ k \in \Bbb Z_+, r_s
\in \Bbb R_+, x_s \in X,\ \sum_{s=1}^k r_s = 1 \right\}.$$

\noindent Also define $\cone_{\Bbb R}(X)$ to be the cone of $X$, i.e.,
$$\cone_{\Bbb R}(X) = \left\{ \sum_{s=1}^k r_s x_s \ : \ k \in \Bbb Z_+,
r_s \in \Bbb R_+, x_s \in X \right\}.$$

\begin{prop}\label{P24}
For $\lambda \in \liehr^*$ and $J \subset J_\lambda$, we have
$$\conv_{\Bbb R}(\wt M(\lambda, J)) = \conv_{\Bbb R}(\wt \bu(\lie
m_J)m_\lambda) - \cone_{\Bbb R}(\Phi^+ \setminus \Phi_J^+),$$

\noindent and hence $\conv_{\Bbb R}(\wt M(\lambda, J))$ is a
$W_J$-invariant subset of $\liehr^*$.
\end{prop}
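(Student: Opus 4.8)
The plan is to deduce the identity from Proposition~\ref{elemgen}(i), which already expresses $\wt M(\lambda,J)$ as a Minkowski difference $\wt(\bu(\lie m_J)m_\lambda) - S$, where $S = \{\sum_{\alpha\notin\Phi^+_J} r_\alpha\alpha : r_\alpha\in\zz\}$. Passing to convex hulls, the main task is to commute $\conv_\R$ with a Minkowski difference of this special shape, and then to identify $\conv_\R(S)$ with $\cone_\R(\Phi^+\setminus\Phi^+_J)$.

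First I would record the general fact that for subsets $A,B$ of a real vector space one has $\conv_\R(A+B) = \conv_\R(A)+\conv_\R(B)$: the inclusion $\supseteq$ is immediate, and $\subseteq$ follows by writing a convex combination of elements $a_s+b_s$ and splitting it, using that $A$ and $B$ live in complementary roles of the sum (this is the standard fact that Minkowski sum commutes with convex hull). Applying this with $A = \wt(\bu(\lie m_J)m_\lambda)$ and $B = -S$ gives
$$\conv_\R(\wt M(\lambda,J)) = \conv_\R(\wt\bu(\lie m_J)m_\lambda) - \conv_\R(S).$$
Next I would check $\conv_\R(S) = \cone_\R(\Phi^+\setminus\Phi^+_J)$. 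The inclusion $\subseteq$ holds because $S\subset\cone_\R(\Phi^+\setminus\Phi^+_J)$ and the cone on the right is convex; conversely, any element $\sum_{\alpha\notin\Phi^+_J} r_\alpha\alpha$ with $r_\alpha\in\R_+$ can be scaled by a large rational, then approximated/realized: more cleanly, $\cone_\R(\Phi^+\setminus\Phi^+_J)$ is generated as a convex set by $\zz$-combinations together with $0\in S$, and every nonnegative real combination lies in the convex hull of $\{0\}$ and large integer multiples lying in $S$. (Concretely: $\R_+\cdot v \subset \conv_\R(\{0\}\cup\zz\cdot v)$ for any $v$, since $tv = (1-t/N)\cdot 0 + (t/N)\cdot Nv$ once $N\ge t$; then extend additively over the finite set of generators.) This yields the displayed formula.

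Finally, $W_J$-invariance follows formally: by Lemma~\ref{L22}, $W_J$ permutes $\Phi^+\setminus\Phi^+_J$, hence fixes $\cone_\R(\Phi^+\setminus\Phi^+_J)$ setwise; and $\wt\bu(\lie m_J)m_\lambda$ is $W_J$-invariant because $\bu(\lie m_J)m_\lambda$ is a finite-dimensional $\lie m_J$-module (its weights being a $W_J$-stable set), so its convex hull is $W_J$-invariant too. A Minkowski difference of two $W_J$-invariant sets under a linear group action is $W_J$-invariant, giving the claim.

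The only genuinely delicate point is the first step — justifying that $\conv_\R$ commutes with the Minkowski difference — and more precisely that $\conv_\R(S) = \cone_\R(\Phi^+\setminus\Phi^+_J)$, since $S$ uses only $\zz$-coefficients while the target cone uses $\R_+$-coefficients; the scaling-through-$0$ argument above is what closes that gap, and it relies on $0\in S$ and on the finiteness of the generating set $\Phi^+\setminus\Phi^+_J$. Everything else is routine.
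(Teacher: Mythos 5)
Your proof is correct, and it takes a genuinely different, more modular route than the paper's. The paper proves the reverse inclusion by a single direct computation: starting from $\mu = \sum_k r_k\mu_k - \sum_\alpha m_\alpha\alpha$ with $r_1\neq 0$, it picks one large integer $t\geq \sum_\alpha m_\alpha/r_1$ and rewrites $\mu$ as a convex combination of the $\mu_k$ and the points $\mu_1 - t\alpha$ (which lie in $\wt M(\lambda,J)$ by \propref{elemgen}), absorbing the real coefficients into the slack $r_1(1-r/t)$ on $\mu_1$. You instead factor the argument into two reusable facts: that $\conv_\R$ commutes with Minkowski sums, and that $\conv_\R$ of the monoid $S=\{\sum r_\alpha\alpha: r_\alpha\in\zz\}$ equals $\cone_\R(\Phi^+\setminus\Phi^+_J)$. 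The latter identification is where the real content sits, and your argument there (write $tv = (1-t/N)\cdot 0 + (t/N)\cdot Nv$ for $N\in\Bbb Z_+$ large, then combine over the finitely many generators) is essentially the same large-integer trick the paper uses inline. The one step you leave implicit is that $\conv_\R(S)$ is closed under addition so that the single-generator argument really does ``extend additively''; this holds because $S$ is a submonoid containing $0$, but it deserved a sentence. Your treatment of $W_J$-invariance is also slightly different (invariance of each factor of the Minkowski difference, via \lemref{L22}, rather than citing \propref{elemgen}(ii) directly), and is fine. Overall your decomposition buys modularity and makes clearer which convexity facts are in play, at the cost of needing the closure-under-addition observation; the paper's proof is shorter but its single computation hides the separation between ``$\conv$ of a Minkowski sum'' and ``$\conv$ of the integer cone equals the real cone.''
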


\begin{pf}
It is clear (by \propref{elemgen}) that $\conv_{\Bbb R}(\wt M(\lambda,
J))$ is contained in the right hand side. For the reverse inclusion, let
$$\mu = \sum_k r_k \mu_k - \sum_{\alpha \in \Phi^+ \setminus \Phi_J^+}
m_\alpha \alpha, \quad \mu_k \in \wt \bu(\lie m_J)m_\lambda, \quad r_k,
m_\alpha \in \Bbb R_+, \quad \sum_k r_k = 1.$$

\noindent If $m_{\alpha} = 0$ for all $\alpha$, then we are done since
$\mu_k\in\wt(M(\lambda, J))$ for all $k$. Hence, we may assume that
$m_{\alpha} >0$ for some $\alpha \in \Phi^+\setminus \Phi_J^+$.
Furthermore, we may assume without loss of generality that $r_1 \neq 0$.
Thus, we can write
\[\mu = \sum_{k \neq 1} r_k \mu_k + r_1(\mu_1 - \sum_\alpha
\frac{m_{\alpha}}{r_1} \alpha). \]

Choose $t \in \Bbb Z_+$ such that $\displaystyle r = \sum_\alpha
\frac{m_{\alpha}}{r_1} \le t$.  Since $\mu_1 - t \alpha \in \wt
M(\lambda, J)$, the claim follows by noting that
$$\mu = \sum_{k \neq 1} r_k \mu_k + \left( r_1 - \frac{r_1 r}{t} \right)
\mu_1 + \sum_\alpha \frac{m_{\alpha}}{t} (\mu_1 - t \alpha) \in
\conv_{\Bbb R}(\wt M(\lambda, J)).$$

\noindent The fact that $\conv_{\Bbb R}(\wt M(\lambda, J))$ is
$W_J$-invariant is immediate from Proposition \ref{elemgen}.
\end{pf}

\subsection{}

We now need some more notions from convexity theory. Given $v,w \in
\liehr^* = \R^n$, define the (affine) hyperplane and the corresponding
half-space as follows:
$$H(v,w) := \{u \in \R^n \ | \ v\cdot(u-w) = 0\}, \ \ H^+(v,w) := \{u \in
\R^n \ | \ v\cdot(u-w) \geq 0\}.$$

\noindent Let $\P$ be a (nonempty) subset of $\R^n$. We say that $H(v,w)$
is a {\it supporting hyperplane} of $\P$ if
$$\P\subset H^+(v,w) \ {\rm and} \ \P\cap H(v,w) \neq \emptyset.$$

\noindent A {\it face} of $\P$ is $\P$ or the intersection of $\P$ with a
supporting hyperplane. We will say that $\P$ is a {\it polyhedron} if it
is the intersection of a finite number of affine half-spaces, and a
bounded polyhedron is a {\it polytope}. The following is standard; see
\cite{Zie}, for instance:

\begin{thm}[Decomposition Theorem]\label{T25}
Let $\P$ be a subset of $\R^n$. Then,
\begin{enumerit}
\item {\rm (Weyl-Minkowski Theorem.)} $\P$ is a polytope if and only if
$\P = \conv_{\R}(U)$ for some finite subset $U \subset \R^n$.

\item {\rm (Finite Basis Theorem.)} $\P$ is a polyhedron if and only if
$\P = \conv_{\R}(U)+ \cone_{\R}(V)$ for some finite sets $U,V \subset
\R^n$.
\end{enumerit}
\end{thm}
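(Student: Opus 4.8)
The plan is to deduce both statements from the corresponding dichotomy for cones — the Farkas--Minkowski--Weyl theorem — which in turn I would prove by Fourier--Motzkin elimination. First I would dispense with part (i) as a consequence of part (ii). A polytope is exactly a bounded polyhedron, and a set of the form $\conv_\R(U)+\cone_\R(V)$ with $U,V$ finite is bounded precisely when $\cone_\R(V)=\{0\}$ (otherwise a nonzero $v\in V$ lets $u+tv$ escape to infinity as $t\to\infty$), in which case the set is simply $\conv_\R(U)$; conversely $\conv_\R(U)$ is visibly bounded, and the empty case is handled by taking $U=V=\emptyset$. Thus, granting (ii), part (i) is immediate.

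The engine is the statement: a cone $C\subset\R^m$ is finitely generated, i.e. $C=\cone_\R(W)$ for some finite $W\subset\R^m$, if and only if $C=\{x : Ax\le 0\}$ for some matrix $A$. For the direction ``finitely generated $\Rightarrow$ polyhedral,'' I would write $C=\{x : \exists\,\mu\ge 0,\ x=W\mu\}$ as the projection to the $x$-coordinates of the polyhedron $\{(x,\mu) : x-W\mu=0,\ \mu\ge 0\}\subset\R^m\times\R^{|W|}$, and invoke the key lemma that a coordinate projection of a polyhedron is again a polyhedron. That lemma is Fourier--Motzkin elimination: to eliminate one variable $y$ from a finite system of linear inequalities, split the inequalities into those with positive, negative, and zero coefficient on $y$, replace each positive/negative pair by the inequality obtained by cancelling $y$, check that the projected solution set equals the solution set of the new finite system, and induct on the number of variables eliminated. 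For the converse, ``polyhedral $\Rightarrow$ finitely generated,'' with $C=\{x : Ax\le 0\}$ and rows $a_1,\dots,a_r$ of $A$, I would put $D:=\cone_\R(a_1,\dots,a_r)$, apply the first direction to write $D=\{y : \langle b_j,y\rangle\le 0,\ 1\le j\le s\}$, and claim $C=\cone_\R(b_1,\dots,b_s)$: the inclusion $\supseteq$ is immediate since each $a_i\in D$ forces $\langle b_j,a_i\rangle\le 0$; and for $\subseteq$, if some $x\in C$ lay outside $\cone_\R(b_1,\dots,b_s)$ — a closed set, being polyhedral by the first direction — then a separating functional $c$ would satisfy $\langle c,b_j\rangle\le 0$ for all $j$, hence $c\in D$, say $c=\sum_i\mu_i a_i$ with $\mu_i\ge 0$, while $\langle c,x\rangle>0$ contradicts $\langle c,x\rangle=\sum_i\mu_i\langle a_i,x\rangle\le 0$.

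With the cone statement in hand, part (ii) follows by homogenization. Given finite $U,V\subset\R^n$, the cone $\widetilde C:=\cone_\R(\{(u,1) : u\in U\}\cup\{(v,0) : v\in V\})\subset\R^{n+1}$ is finitely generated, hence polyhedral, and $\conv_\R(U)+\cone_\R(V)$ is exactly its slice $\{x\in\R^n : (x,1)\in\widetilde C\}$ by the affine hyperplane $\{t=1\}$, hence a polyhedron. Conversely, given a polyhedron $\P=\{x : Ax\le b\}$, the cone $\widetilde C:=\{(x,t) : Ax-tb\le 0,\ t\ge 0\}$ is polyhedral, so $\widetilde C=\cone_\R(W)$ for a finite $W$ whose elements (all having nonnegative last coordinate) we may rescale to have last coordinate $0$ or $1$; setting $U=\{u : (u,1)\in W\}$ and $V=\{v : (v,0)\in W\}$ and expanding $(x,1)\in\widetilde C$ in the generators shows $\P=\conv_\R(U)+\cone_\R(V)$, since the coefficients of the $(u,1)$-generators sum to the last coordinate, namely $1$.

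The main obstacle is the Fourier--Motzkin elimination lemma itself: the three-way sign bookkeeping, the verification that the reduced system has exactly the projected solution set, and making the induction on the number of eliminated variables airtight. Everything else — the two homogenization passages and the separation argument — is routine once that lemma and a basic separating-hyperplane theorem for closed convex sets are available.
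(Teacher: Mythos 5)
Your proof is correct, and it follows the canonical route: reduce everything to the Farkas--Minkowski--Weyl dichotomy for cones, prove ``finitely generated~$\Rightarrow$~polyhedral'' by Fourier--Motzkin elimination, get the converse by the polar/separation trick, and lift to polyhedra via the $t=1$ slice of the homogenization in $\R^{n+1}$; the derivation of (i) from (ii) by observing that boundedness forces $\cone_\R(V)=\{0\}$ is also clean. The paper itself supplies no proof for this theorem, simply labelling it standard and citing Ziegler's \emph{Lectures on Polytopes}, where the argument given is precisely the one you outline, so there is nothing to contrast.
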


In particular, the convex hull of the union of a finite set with a polytope
is also a polytope.

Using the Decomposition Theorem, we have the following corollary to
\propref{P24}:

\begin{cor}
The set $\conv_{\Bbb R}(\wt \bu(\lie m_J) m_\lambda)$ is a convex
polytope, and $\conv_{\Bbb R}(\wt M(\lambda, J))$ is a convex polyhedron
in $\liehr^*$.
\end{cor}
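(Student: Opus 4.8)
The plan is to deduce the corollary directly from \propref{P24} and the Decomposition Theorem (\thmref{T25}). First I would handle $\conv_{\Bbb R}(\wt \bu(\lie m_J) m_\lambda)$: by \propref{elemgen}(i), the set $\wt \bu(\lie m_J) m_\lambda$ is a \emph{finite} subset of $\lie h^*$, so by the Weyl--Minkowski part of \thmref{T25} its convex hull is a polytope. Next, by \propref{P24} we have the identity
$$\conv_{\Bbb R}(\wt M(\lambda, J)) = \conv_{\Bbb R}(\wt \bu(\lie m_J) m_\lambda) - \cone_{\Bbb R}(\Phi^+ \setminus \Phi_J^+).$$
Here $\conv_{\Bbb R}(\wt \bu(\lie m_J) m_\lambda) = \conv_{\Bbb R}(U)$ for the finite set $U := \wt \bu(\lie m_J) m_\lambda$, while $-\cone_{\Bbb R}(\Phi^+ \setminus \Phi_J^+) = \cone_{\Bbb R}(V)$ where $V := -(\Phi^+ \setminus \Phi_J^+)$ is again a finite set (since $\Phi^+$ is finite). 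Therefore $\conv_{\Bbb R}(\wt M(\lambda, J)) = \conv_{\Bbb R}(U) + \cone_{\Bbb R}(V)$, which is exactly the form appearing in the Finite Basis Theorem, so it is a polyhedron in $\liehr^*$.

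The only genuinely delicate point is the bookkeeping: one must note that the Minkowski \emph{difference} $A - B$ used in \propref{P24} is the same as the Minkowski \emph{sum} $A + (-B)$ in the paper's conventions (this was fixed in \secref{S2}), and that $-\cone_{\Bbb R}(X) = \cone_{\Bbb R}(-X)$, which is immediate from the definition of $\cone_{\Bbb R}$ since negating each generator negates every nonnegative combination. With these identifications in place, no further computation is needed. I would also remark that the containment $\wt \bu(\lie m_J) m_\lambda \subset \liehr^*$ holds because all weights of a highest weight module lie in $\lambda - Q^+ \subset \liehr^*$, so the resulting polytope and polyhedron genuinely sit inside $\liehr^*$ and not merely in $\lie h^* \otimes_{\Bbb Q} \R$ or some larger space.

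I do not expect any real obstacle here; the corollary is a formal consequence of the two cited results. If anything, the subtlety worth flagging for the reader is purely notational (the $A-B$ versus $A+(-B)$ convention and $-\cone = \cone(-\,\cdot\,)$), and a one-line reminder that finiteness of $\wt \bu(\lie m_J) m_\lambda$ comes from \propref{elemgen}(i) while finiteness of $\Phi^+ \setminus \Phi^+_J$ is automatic. Everything else is a direct application of \thmref{T25}.
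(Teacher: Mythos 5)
Your proposal is correct and follows exactly the route the paper intends (the paper itself merely says ``Using the Decomposition Theorem, we have the following corollary to Proposition 2.4'' and leaves the bookkeeping implicit): finiteness of $\wt \bu(\lie m_J) m_\lambda$ from Proposition 2.3(i) plus Weyl--Minkowski gives the polytope claim, and the identity from Proposition 2.4 rewritten as $\conv_{\Bbb R}(U) + \cone_{\Bbb R}(V)$ with $U,V$ finite plus the Finite Basis Theorem gives the polyhedron claim. The notational remarks you flag ($A - B = A + (-B)$ and $-\cone_{\Bbb R}(X) = \cone_{\Bbb R}(-X)$) are exactly the right things to spell out, and your observation that everything lands in $\liehr^*$ because $\lambda \in \liehr^*$ and the weights lie in $\lambda - Q^+$ is a reasonable sanity check.
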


\subsection{}

One of the main results of this paper is the following:

\begin{theorem}\label{Tvin}
Let $\lambda \in \liehr^*$, $J\subset J_\lambda$, and let
$F \subset \conv_\R(\wt M(\lambda, J))$.
Then $F$ is a face of $\conv_\R(\wt M(\lambda, J))$ if and only if there
exists a subset $I_0$ of $I$ and $w \in W_J$, such that
$$wF = \conv_\R(\wt M(\lambda,J) \cap (\lambda - Q_{I_0}^+)).$$
\end{theorem}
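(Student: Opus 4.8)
The plan is to prove both directions of the equivalence, using \propref{P24} to reduce statements about $\conv_\R(\wt M(\lambda,J))$ to statements about the polytope $\conv_\R(\wt \bu(\lie m_J)m_\lambda)$ and the cone $\cone_\R(\Phi^+\setminus\Phi_J^+)$. The key observation is that by the Finite Basis Theorem, $\conv_\R(\wt M(\lambda,J)) = P_0 - C_0$ where $P_0 := \conv_\R(\wt\bu(\lie m_J)m_\lambda)$ is a polytope and $C_0 := \cone_\R(\Phi^+\setminus\Phi_J^+)$, and a standard fact (which I would isolate as a lemma) says that every proper face of such a polyhedron is of the form $F_{P_0} - F_{C_0}$, where $F_{P_0}$ is a face of $P_0$ and $F_{C_0}$ is a face of $C_0$, cut out by a common linear functional $\phi$ that is bounded above on $C_0$ (i.e. nonpositive on $C_0$) and attains its maximum on $P_0$ along $F_{P_0}$.

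For the forward direction, given a proper face $F = H(\phi,w_0)\cap\conv_\R(\wt M(\lambda,J))$, I would first analyze the linear functional $\phi$. Since $\phi \le 0$ on $C_0 = \cone_\R(\Phi^+\setminus\Phi_J^+)$, we have $\phi(\alpha)\le 0$ for all $\alpha\in\Phi^+\setminus\Phi_J^+$. Now the crucial point is to replace $\phi$ by a "dominant" representative using the $W_J$-action: by \propref{P24}, $\conv_\R(\wt M(\lambda,J))$ is $W_J$-invariant, so for any $w\in W_J$ the functional $w\phi$ cuts out the face $wF$; I would choose $w$ so that $w\phi$ is $W_J$-antidominant (or dominant — the sign convention to be fixed so that the maximum of $\lambda - Q^+$-type weights is achieved), i.e. $(w\phi)(h_{\alpha_j})\le 0$ for all $j\in J$. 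By \lemref{L22}, $w$ still sends $\Phi^+\setminus\Phi_J^+$ into $\Phi^+$, so $w\phi$ remains $\le 0$ on all roots in $\Phi^+\setminus\Phi_J^+$ as well (after re-examining: since $w^{-1}(\Phi^+\setminus\Phi_J^+)\subset\Phi^+\setminus\Phi_J^+$, the condition is preserved). Then I set $I_0 := \{i\in I : (w\phi)(h_{\alpha_i}) = 0\} = \supp$-complement data, and claim $wF = \conv_\R(\wt M(\lambda,J)\cap(\lambda - Q_{I_0}^+))$. The inclusion $\supseteq$ follows because every weight $\mu$ of $M(\lambda,J)$ satisfies $\mu\le\lambda$ in the usual order and $\lambda-\mu\in Q^+$, so $(w\phi)(\lambda-\mu)\ge 0$ with equality exactly when $\lambda-\mu\in Q_{I_0}^+$; hence such $\mu$ all lie on the face. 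The reverse inclusion $\subseteq$ requires showing every weight on the face $wF$ lies in $\lambda - Q_{I_0}^+$, which uses that $wF$ is a face of a polytope-minus-cone: writing a weight $\mu\in wF$ via \propref{P24} and applying the functional $w\phi$, one forces each $\mu_k$ to be a weight of $\bu(\lie m_J)m_\lambda$ lying on the corresponding face of $P_0$ and each $m_\alpha\alpha$ contribution to vanish unless $\alpha\in Q_{I_0}$.

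For the converse direction, given $I_0\subset I$ and $w\in W_J$, I would exhibit an explicit supporting hyperplane whose associated face is $w^{-1}\conv_\R(\wt M(\lambda,J)\cap(\lambda-Q_{I_0}^+))$. The natural candidate: pick $\psi\in\liehr^*$ with $\psi(h_{\alpha_i}) = 0$ for $i\in I_0$ and $\psi(h_{\alpha_i}) < 0$ for $i\notin I_0$ (e.g. $\psi = -\sum_{i\notin I_0}\omega_i$, or rather its identification in $\liehr$), so that $\psi$ evaluated on $Q^+$ is $\le 0$ with zero locus exactly $Q_{I_0}$. Viewing the linear functional $u\mapsto\psi(\lambda - u)$ — equivalently $\mu\mapsto -\psi\cdot\mu$ up to constants — this is maximized on $\conv_\R(\wt M(\lambda,J))$ precisely along $\conv_\R(\wt M(\lambda,J)\cap(\lambda-Q_{I_0}^+))$, provided the maximum is finite; finiteness holds because $\psi\le 0$ on $\cone_\R(\Phi^+\setminus\Phi_J^+)$. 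Then applying $w^{-1}$ (using $W_J$-invariance again) transports this to the desired face $F$. I would need to double-check that the set $\wt M(\lambda,J)\cap(\lambda-Q_{I_0}^+)$ is nonempty (it contains $\lambda$) so that the hyperplane genuinely supports, and that its convex hull is a face and not all of $\conv_\R(\wt M(\lambda,J))$ unless $I_0 = I$ (in which case $F$ is the improper face, consistent with the "$\P$ or..." clause in the definition of face).

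The main obstacle will be the $\subseteq$ inclusion in the forward direction: showing that a weight lying on an arbitrary face of the polyhedron $P_0 - C_0$ necessarily lies in the specified $\lambda - Q_{I_0}^+$ translate. This requires carefully combining the face-decomposition lemma for $\conv_\R = \text{polytope} - \text{cone}$ with the representation-theoretic fact (\propref{elemgen}) that the "polytope part" $\wt\bu(\lie m_J)m_\lambda$ lies in $\lambda - Q_J^+$ and is $W_J$-stable, and then checking that the $W_J$-reduction to an antidominant functional is compatible with all these constraints simultaneously — in particular that conjugating $\phi$ does not spoil its nonpositivity on $\Phi^+\setminus\Phi_J^+$, for which \lemref{L22} is exactly the needed input.
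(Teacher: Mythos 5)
Your proof follows essentially the same route as the paper's: identify a face with the maximizer of a linear functional $(\nu,-)$ (the paper's \lemref{L2}), conjugate by $w\in W_J$ to make $w\nu$ dominant on $J$, argue that $w\nu$ is in fact dominant on all of $I$, set $I_0$ to be the set of $i$ with $(w\nu,\alpha_i)=0$, and for the converse exhibit the explicit functional $(\rho_{I\setminus I_0},-)$ transported by $W_J$-invariance. The two places you deviate are minor: you route through a Minkowski face-decomposition lemma for $P_0-C_0$ and prove full dominance via \lemref{L22} together with the $W_J$-invariance of $\Phi^+\setminus\Phi_J^+$, whereas the paper proves $(w\nu,\alpha_i)\ge 0$ for $i\notin J$ directly by observing that $\lambda - r\alpha_i\in\wt M(\lambda,J)$ for all $r\in\Bbb Z_+$, so a negative value would make $(w\nu,-)$ unbounded above — a shorter argument that avoids the decomposition lemma altogether.

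One point to fix before the argument is watertight: your signs are inconsistent. Because $\conv_\R(\wt M(\lambda,J)) = P_0 - C_0$ and the weights lie in $\lambda - Q^+$, boundedness of the supporting functional $\phi$ forces $\phi\ge 0$ on $C_0$ (not $\le 0$), and the correct reduction is to the $W_J$-\emph{dominant} representative, $(w\phi)(h_{\alpha_j})\ge 0$ for $j\in J$; with the antidominant choice you state, the crucial inequality $(w\phi)(\lambda-\mu)\ge 0$ would come out reversed. You flagged ``sign convention to be fixed,'' which is exactly right. Finally, the ``$\subseteq$'' inclusion you label the main obstacle is actually easy once $w\phi$ is fully dominant and needs neither the decomposition lemma nor a careful dissection of the cone part: write a point of $wF$ as a finite convex combination $\sum_k r_k\mu_k$ of weights of $M(\lambda,J)$; applying $w\phi$ and using $(w\phi,\mu_k)\le(w\phi,\lambda)$ with equality iff $\lambda-\mu_k\in Q_{I_0}^+$ forces every $\mu_k$ with $r_k>0$ into $\lambda-Q_{I_0}^+$. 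This is essentially what lies behind the paper's ``it is clear that.''
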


\begin{pf}
Let $F$ be a face of $\conv_{\R}(\wt M(\lambda,J))$. By \lemref{L2}, $F$
maximizes some linear functional $\varphi \in (\R^n)^*$ in
$\conv_{\R}(\wt M(\lambda,J))$. Let $\nu \in \lie{h}_{\R}^*$ be such that
$\varphi(\mu) = (\nu, \mu)$ for all $\mu \in \lie{h}_{\R}^*$. Choose $w
\in W_J$ such that $(w(\nu), \alpha_j) \geq 0$ for all $j \in J$. Notice
that $wF$ maximizes the inner product $(w(\nu), -)$ and, hence, is a face
of $\conv_{\R}(\wt M(\lambda, J))$ . 

Suppose that $(w(\nu), \alpha_i) < 0$ for some $i \in I \setminus J$.
Since $i \not\in J$, $\lambda - r\alpha_i \in \wt M(\lambda,J)$ for all
$r \in \Bbb Z_+$. However, $(w(\nu), \lambda - r\alpha_i) = (w(\nu),
\lambda) - r(w(\nu), \alpha_i)$ can be arbitrarily large, which
contradicts $(w(\nu), -)$ having a finite maximum in $\conv_{\R}(\wt
M(\lambda,J))$. Thus, $w(\nu)$ must be in the fundamental Weyl chamber. 

Write $w(\nu) = \sum_{i \in I} a_i \omega_i$ with $a_i \geq 0$ for all $i
\in I$. Letting $I_0 = \{ i \in I \ | \ a_i = 0\}$, it is clear that $wF
= \conv_{\R}(\wt M(\lambda, J) \cap (\lambda - Q_{I_0}^+))$.

For the converse, let $\rho_{I \setminus I_0} = \sum_{i \in I \setminus
I_0} \omega_i$, and consider the linear functional $\varphi$ given by
$$\varphi(\mu) = (\rho_{I \setminus I_0}, \mu).$$

The subset of $\conv_{\R}(\wt M(\lambda, J))$ that maximizes $\varphi$ is
precisely $\conv_{\R}(\wt M(\lambda,J) \cap (\lambda - Q_{I_0}^+))$.
Hence $\conv_{\R}(\wt M(\lambda,J) \cap (\lambda - Q_{I_0}^+))$ is a face
of $\conv_{\R}(\wt M(\lambda,J))$. 

Suppose that $F \subset \conv_{\R}(\wt M(\lambda, J))$ and $wF =
\conv_{\R}(\wt M(\lambda,J)\cap(\lambda-Q_{I_0}^+))$ for some $w \in
W_J$. Notice that $F$ maximizes the linear functional $\varphi\circ w$
where $\varphi(\mu) = (\rho_{I\setminus I_0}, \mu)$ as above; therefore,
$F$ is a face of $\conv_{\R}(\wt M(\lambda, J))$ by \lemref{L2}.
\end{pf}

As a consequence, we obtain information about the set of weights of
$M(\lambda, J)$ that lie in a face. 

\begin{cor}
Let $\lambda \in \liehr^*$, $J\subset J_\lambda$, and suppose that $F$ is
a face of $\conv_\R (\wt M(\lambda, J))$. There exist $w \in W_J$ and
$I_0 \subset I$ such that
$$w(F \cap \wt M(\lambda, J))= \wt M(\lambda, J) \cap (\lambda -
Q_{I_0}^+).$$
\end{cor}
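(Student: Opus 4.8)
The plan is to deduce the corollary directly from \thmref{Tvin} by intersecting everything with $\wt M(\lambda,J)$ and tracking how the Weyl group action interacts with this set. By \thmref{Tvin}, since $F$ is a face of $\conv_\R(\wt M(\lambda,J))$, there exist $I_0 \subset I$ and $w \in W_J$ with $wF = \conv_\R(\wt M(\lambda,J) \cap (\lambda - Q_{I_0}^+))$. First I would apply $w$ to both sides of the trivial identity $F \cap \wt M(\lambda,J) = F \cap \wt M(\lambda,J)$ and use that $w$ permutes $\wt M(\lambda,J)$ (Proposition \ref{elemgen}(ii), since $w \in W_J$), so that $w(F \cap \wt M(\lambda,J)) = wF \cap \wt M(\lambda,J) = \conv_\R\bigl(\wt M(\lambda,J) \cap (\lambda - Q_{I_0}^+)\bigr) \cap \wt M(\lambda,J)$.

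It therefore remains to show that $\conv_\R\bigl(\wt M(\lambda,J)\cap(\lambda-Q_{I_0}^+)\bigr) \cap \wt M(\lambda,J) = \wt M(\lambda,J)\cap(\lambda-Q_{I_0}^+)$. The inclusion $\supseteq$ is immediate. For $\subseteq$, suppose $\mu \in \wt M(\lambda,J)$ lies in the convex hull of $S := \wt M(\lambda,J)\cap(\lambda-Q_{I_0}^+)$; I must show $\mu \in \lambda - Q_{I_0}^+$. Write $\mu = \sum_s r_s \nu_s$ with $\nu_s \in S$, $r_s \in \R_{>0}$, $\sum_s r_s = 1$. Each $\nu_s = \lambda - \sum_{i \in I_0} c_{s,i}\alpha_i$ with $c_{s,i} \in \zz$, so $\mu = \lambda - \sum_{i\in I_0}\bigl(\sum_s r_s c_{s,i}\bigr)\alpha_i$, i.e.\ $\lambda - \mu \in \cone_\R(\{\alpha_i : i \in I_0\})$. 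On the other hand, since all weights of $M(\lambda,J)$ lie in $\lambda - Q^+$ (by Proposition \ref{elemgen}(i), as $\wt\bu(\lie m_J)m_\lambda \subset \lambda - Q^+_J$), we have $\lambda - \mu \in Q^+ \subset Q$. The key point is then the linear-algebra fact that $\cone_\R(\{\alpha_i : i \in I_0\}) \cap Q = Q_{I_0}^+$: since the simple roots are $\R$-linearly independent, a nonnegative real combination of the $\alpha_i$ ($i \in I_0$) that also lies in the root lattice $Q$ must have all its coefficients be nonnegative integers supported on $I_0$. This gives $\lambda - \mu \in Q_{I_0}^+$, hence $\mu \in S$, as required.

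The main obstacle — really the only non-formal step — is the identity $\conv_\R(S) \cap \wt M(\lambda,J) = S$, and within it the observation that $\cone_\R(\{\alpha_i : i\in I_0\}) \cap Q = Q_{I_0}^+$. This is elementary given linear independence of simple roots, but it is the place where one genuinely uses that weights of a GVM sit in the single lattice coset $\lambda + Q$ together with the highest-weight normalization; one should be slightly careful that the $\nu_s$ all lie below $\lambda$ by elements of $Q^+$, not merely of $Q_{I_0}$, so that the coefficients $\sum_s r_s c_{s,i}$ are forced to be nonnegative rationals and then, by lattice membership, nonnegative integers. Everything else is a direct application of \thmref{Tvin} and $W_J$-invariance of $\wt M(\lambda,J)$.
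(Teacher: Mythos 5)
Your proof is correct and follows essentially the same route as the paper's: apply \thmref{Tvin} to get $w$ and $I_0$, use $W_J$-invariance of $\wt M(\lambda,J)$ to push $w$ through the intersection, and then verify $\conv_\R(\wt M(\lambda,J)\cap(\lambda-Q_{I_0}^+))\cap\wt M(\lambda,J)=\wt M(\lambda,J)\cap(\lambda-Q_{I_0}^+)$ via linear independence of the simple roots. Your packaging of the final step as the lattice--cone identity $\cone_\R(\{\alpha_i:i\in I_0\})\cap Q=Q_{I_0}^+$ is just a reorganization of the same comparison-of-coefficients argument the paper performs directly.
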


\begin{proof}
By \thmref{Tvin}, there exist $w \in W_J$ and $I_0 \subset I$, such that
\[ wF = \conv_\R(\wt M(\lambda,J) \cap (\lambda - Q^+_{I_0})). \]

\noindent By Proposition \ref{elemgen}, $\wt M(\lambda,J)$ is
$W_J$-invariant, and so we have
\[ wF \cap \wt M(\lambda,J) = w(F \cap \wt M(\lambda,J)), \]

\noindent and the corollary follows if we prove that
\[ \conv_\R(\wt M(\lambda, J) \cap (\lambda - Q^+_{I_0})) \cap \wt
M(\lambda, J) = ( \lambda - Q^+_{I_0}) \cap \wt M(\lambda, J). \]

It is clear that the right hand side is contained in the left hand side.
For the reverse inclusion, given $\mu \in \conv_\R(\wt M(\lambda, J) \cap
(\lambda - Q^+_{I_0})) \cap \wt M(\lambda, J)$, we write:
$$\mu = \lambda - \sum_{i \in I} n_i \alpha_i = \sum_{s=1}^k a_s (\lambda
- \sum_{i \in I_0} m_{si} \alpha_i),$$

\noindent where $n_i \in \Bbb Z_+$ for $i \in I$, $m_{si} \in \Bbb Z_+$
for $1\le s\le k$ and $i\in I_0$, and $a_s \in \R_+$, with $\sum_{s=1}^k
a_s = 1$. Using the linear independence of $\alpha_i$, $i\in I$, we see
immediately that $n_i = 0\ \forall i \notin I_0$, and hence $\mu\in
\lambda - Q^+_{I_0}$. The reverse inclusion is proved, and we are done.
\end{proof}

\begin{rem}
In the case when $\lambda\in P^+$ and $J = J_\lambda = I$, the Theorem is
proved in \cite{Vin}. Our proof as we mentioned in the introduction is
quite different.
\end{rem}

\subsection{}\label{S27}

Another corollary of the above theorem is:

\begin{cor}
$F$ is a face of $\conv_\R(\wt(M(\lambda,J)))$ if and only if
$$F = \conv_{\R}(w(\wt \bu(\lie m_{I_0\cap J}) m_\lambda)) - \cone_{\Bbb
R} w(\Phi_{I_0}^+ \setminus \Phi_{I_0 \cap J}^+)$$

\noindent for some $w \in W_J$ and $I_0 \subset I$.
\end{cor}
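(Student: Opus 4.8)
The plan is to feed the face classification of \thmref{Tvin} into a lattice-point computation and then convexify by the argument of \propref{P24}. By \thmref{Tvin}, $F$ is a face of $\conv_\R(\wt M(\lambda,J))$ if and only if there exist $w\in W_J$ and $I_0\subset I$ with $wF=\conv_\R\bigl(\wt M(\lambda,J)\cap(\lambda-Q^+_{I_0})\bigr)$. Each element of $W_J$ acts linearly on $\liehr^*$, hence commutes with $\conv_\R$, $\cone_\R$ and Minkowski sums and differences, and $w\mapsto w^{-1}$ is a bijection of $W_J$; so it suffices to prove, for every $I_0\subset I$, the identity
\begin{equation*}
\conv_\R\bigl(\wt M(\lambda,J)\cap(\lambda-Q^+_{I_0})\bigr)=\conv_\R\bigl(\wt\bu(\lie m_{I_0\cap J})m_\lambda\bigr)-\cone_\R\bigl(\Phi^+_{I_0}\setminus\Phi^+_{I_0\cap J}\bigr),
\end{equation*}
and then apply $w^{-1}$ to both sides of $wF=(\text{right-hand side})$ and relabel $w^{-1}$ as $w$.

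The first step is the corresponding identity at the level of lattice points:
\begin{equation*}
\wt M(\lambda,J)\cap(\lambda-Q^+_{I_0})=\wt\bu(\lie m_{I_0\cap J})m_\lambda-\cone_{\zz}\bigl(\Phi^+_{I_0}\setminus\Phi^+_{I_0\cap J}\bigr).
\end{equation*}
For the nontrivial inclusion, by \propref{elemgen}(i) write $\mu\in\wt M(\lambda,J)$ as $\mu=\mu'-\sum_{\alpha\in\Phi^+\setminus\Phi^+_J}r_\alpha\alpha$ with $\mu'\in\wt\bu(\lie m_J)m_\lambda$ and $r_\alpha\in\zz$; then $\lambda-\mu'\in Q^+_J$, and $\lambda-\mu'$ together with each $\alpha$ has nonnegative simple-root coordinates. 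Hence $\mu\in\lambda-Q^+_{I_0}$ forces the $\alpha_i$-coordinate of every summand to vanish for each $i\notin I_0$, i.e. $\supp(\lambda-\mu')\subset I_0\cap J$ and $\supp\alpha\subset I_0$ whenever $r_\alpha>0$; the latter means $\alpha\in\Phi^+_{I_0}\setminus\Phi^+_{I_0\cap J}$, using that $Q_{I_0}\cap Q_J=Q_{I_0\cap J}$ (so $\Phi^+_{I_0}\cap\Phi^+_J=\Phi^+_{I_0\cap J}$), which is immediate from linear independence of the simple roots. Finally, the weights $\mu'$ of $\bu(\lie m_J)m_\lambda=\bu(\lie n^-_J)m_\lambda$ with $\supp(\lambda-\mu')\subset I_0\cap J$ are exactly the weights of $\bu(\lie m_{I_0\cap J})m_\lambda$: the $\mu'$-weight space of $\bu(\lie n^-_J)m_\lambda$ is spanned by monomials $x^-_{\beta_1}\cdots x^-_{\beta_k}m_\lambda$ with $\beta_l\in\Phi^+_J$ and $\beta_1+\cdots+\beta_k=\lambda-\mu'$, and nonnegativity of root coordinates forces each $\beta_l\in\Phi^+_{I_0\cap J}$, so these monomials lie in $\bu(\lie n^-_{I_0\cap J})m_\lambda=\bu(\lie m_{I_0\cap J})m_\lambda$. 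The reverse inclusion is clear.

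It remains to convexify. The proof of \propref{P24} applies verbatim with $(\Phi^+,J)$ replaced by $(\Phi^+_{I_0},\,I_0\cap J)$ and $M(\lambda,J)$ replaced by the lattice set on the right-hand side above — equivalently, by the generalized Verma module $M(\lambda,I_0\cap J)$ for the semisimple Lie algebra $\lie g_{I_0}$ with simple roots $\{\alpha_i:i\in I_0\}$ (note $I_0\cap J\subset J\subset J_\lambda$). Indeed, the only module-theoretic input used in that proof is that $\mu_1-t\alpha$ lies in this lattice set for all large $t\in\zz$ when $\mu_1\in\wt\bu(\lie m_{I_0\cap J})m_\lambda$ and $\alpha\in\Phi^+_{I_0}\setminus\Phi^+_{I_0\cap J}$, which is immediate from the lattice identity just established. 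This gives the displayed convex-hull identity, and combining it with \thmref{Tvin} as explained completes the proof. I expect the lattice-point identity to be the only real step — in particular the root-support bookkeeping that simultaneously pins down the ``Levi part'' $\mu'$ (controlled by $\lie m_{I_0\cap J}$) and shows that only roots of $\Phi^+_{I_0}\setminus\Phi^+_{I_0\cap J}$ can occur; once that is in hand, convexifying is just \propref{P24} for $\lie g_{I_0}$, and the face statement follows from \thmref{Tvin} by linearity of $W_J$.
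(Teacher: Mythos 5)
Your proof is correct and follows essentially the same route as the paper: reduce via \thmref{Tvin} and linearity of $W_J$ to the identity \eqref{E27}, recognize $\wt M(\lambda,J)\cap(\lambda-Q^+_{I_0})$ as the weight set of the $\lie g_{I_0}$-highest-weight module $\bu(\lie g_{I_0})m_\lambda$, and convexify by rerunning the argument of \propref{elemgen}/\propref{P24} relative to $\lie g_{I_0}$. The only difference is that you make explicit, with the PBW and simple-root-coordinate bookkeeping, the lattice-point identity that the paper states in one line ("$\wt M(\lambda,J)\cap(\lambda-Q^+_{I_0})$ is the set of weights of $\bu(\lie g_{I_0})m_\lambda$"), which is a welcome amount of extra detail but not a different method.
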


\begin{pf}
We first prove the following statement (which generalizes Proposition
\ref{P24}):

For all $I_0, J \subset I$,
\begin{equation}\label{E27}
\conv_\R(\wt M(\lambda,J) \cap (\lambda-Q_{I_0}^+)) = \conv_{\R}(\wt
\bu(\lie m_{I_0 \cap J}) m_\lambda) - \cone_{\Bbb R} (\Phi_{I_0}^+
\setminus \Phi_{I_0 \cap J}^+).
\end{equation}

To prove this equation, note that $\wt(M(\lambda, J)) \cap (\lambda -
Q_{I_0}^+)$ is the same as the set of weights of the
$\lie{g}_{I_0}$-submodule $\bu(\lie{g}_{I_0}) m_{\lambda}$.
Restricting our attention to $\lie{g}_{I_0}$, we see that, as in
\propref{elemgen},
$$\conv_{\Bbb R}(\wt \bu(\lie{g}_{I_0}) m_{\lambda}) = \conv_{\R}(\wt
\bu(\lie{m}_{I_0 \cap J}) m_\lambda) - \left\{ \sum_{\alpha \in
\Phi_{I_0}^+ \setminus \Phi_{I_0 \cap J}^+} r_\alpha \alpha\ : \ r_\alpha
\in \R_+ \right\}.$$ 

This proves Equation \eqref{E27}. Now, by Theorem \ref{Tvin}, if $F$ is a
face, there exist $w \in W_J$ and $I_0 \subset I$ such that $w^{-1}(F) =
\conv_\R(\wt(M(\lambda,J)) \cap (\lambda-Q_{I_0}^+))$. The result then
follows from Equation \eqref{E27} and the linearity of $w$.
\end{pf}

We claim that the above corollary is a special case of the following more
general result - which also generalizes a result of Vinberg in
\cite{Vin}.

\begin{prop}
Let $\lambda \in \liehr^*$, and let $J \subset I$. Suppose
$\bp(\lambda,J)$ is the polyhedron in $\lie{h}_{\R}^*$ given by
$$\bp(\lambda,J) = \conv_{\R}(W_J(\lambda)) - \cone_{\Bbb R}
(\Phi^+\setminus\Phi_J^+).$$

\noindent Then $F$ is a face of $\bp(\lambda,J)$ if and only if
$$w(F) = \conv_{\R}(W_{J\cap I_0}(\lambda')) - \cone_{\Bbb
R}(\Phi_{I_0}^+ \setminus \Phi_{J \cap I_0}^+),$$

\noindent for some $w \in W_J$ and $I_0 \subset I$, where $\lambda' \in
W_J(\lambda)$ satisfies $\lambda'(h_j) \geq 0 \ \forall \ j \in J$.
\end{prop}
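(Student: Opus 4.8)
The plan is to reduce this Proposition to \thmref{Tvin} and its Corollary in \secref{S27}, by recognizing $\bp(\lambda,J)$ as the weight polytope of a generalized Verma module — at least after passing to a dominant representative. First I would argue that $\bp(\lambda,J)$ depends only on the $W_J$-orbit of $\lambda$: replacing $\lambda$ by any $\lambda' \in W_J(\lambda)$ leaves $\conv_\R(W_J(\lambda))$ unchanged, and does not affect $\cone_\R(\Phi^+\setminus\Phi_J^+)$ at all, so $\bp(\lambda,J) = \bp(\lambda',J)$. Hence we may assume from the outset that $\lambda$ itself is $J$-dominant, i.e. $\lambda(h_j)\ge 0$ for all $j\in J$ (such a representative exists since $W_J$ acts on $\liehr^*$ with the $J$-dominant cone as a fundamental domain). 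Next I would identify $\conv_\R(W_J(\lambda))$ with $\conv_\R(\wt\,\bu(\lie m_J)m_\lambda)$: when $\lambda$ is $J$-dominant and lies in $P_J^+$ this is the standard statement that the weights of the finite-dimensional $\lie g_J$-module of highest weight $\lambda$ have convex hull equal to the $W_J$-orbit hull of $\lambda$; for general real $\lambda$ one takes this as the definition of the polytope, and the point is simply that $\conv_\R(W_J(\lambda))$ and $\wt\,\bu(\lie m_J)m_\lambda$ have the same convex hull because the extreme points of the orbit hull are exactly the $W_J$-translates of the dominant representative. Combining, $\bp(\lambda,J) = \conv_\R(\wt M(\lambda,J))$ by \propref{P24} (in the case $\lambda$ is actually a weight) or by the same formula taken as a definition otherwise.

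Once this identification is in place, the forward and backward directions follow almost verbatim from the proof of \thmref{Tvin}. For the forward direction: if $F$ is a face of $\bp(\lambda,J)$, then by \lemref{L2} it maximizes a linear functional $(\nu,-)$; choose $w\in W_J$ with $w\nu$ dominant on $J$; the same cone-in-the-polyhedron argument (the rays $\alpha_i$, $i\notin J$, are present in $\cone_\R(\Phi^+\setminus\Phi_J^+)$, forcing $(w\nu,\alpha_i)\ge 0$ for $i\notin J$) shows $w\nu$ is fully dominant; writing $w\nu=\sum a_i\omega_i$ and $I_0=\{i: a_i=0\}$ gives $w(F)=\conv_\R(\wt M(\lambda,J)\cap(\lambda-Q_{I_0}^+))$, which by Equation \eqref{E27} of the Corollary in \secref{S27} equals $\conv_\R(\wt\,\bu(\lie m_{I_0\cap J})m_\lambda) - \cone_\R(\Phi_{I_0}^+\setminus\Phi_{I_0\cap J}^+) = \conv_\R(W_{J\cap I_0}(\lambda)) - \cone_\R(\Phi_{I_0}^+\setminus\Phi_{J\cap I_0}^+)$, using the orbit-hull identification once more with the now-dominant $\lambda' = \lambda$. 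For the converse: given such a description of $w(F)$, note that $\rho_{I\setminus I_0} = \sum_{i\in I\setminus I_0}\omega_i$ and its pullback $\varphi\circ w$ is maximized on $F$ exactly there, so $F$ is a face by \lemref{L2}; here one must also observe that $W_{J\cap I_0}(\lambda)$ and $\wt\,\bu(\lie m_{J\cap I_0})m_\lambda$ again have the same convex hull, and that the $J$-dominance hypothesis on $\lambda'$ is precisely what lets one pick $\lambda' = w^{-1}$(dominant representative) consistently.

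The main obstacle I anticipate is the bookkeeping around $\lambda$ being an arbitrary element of $\liehr^*$ rather than a genuine dominant integral weight: the generalized Verma module $M(\lambda,J)$ is only defined when $J\subset J_\lambda$, so strictly speaking $\conv_\R(\wt M(\lambda,J))$ need not make sense for the $\lambda$ in this Proposition. The clean way around this is to treat the formula $\conv_\R(W_J(\lambda)) - \cone_\R(\Phi^+\setminus\Phi_J^+)$ as the primary object, prove the orbit-hull identity $\conv_\R(W_J(\mu)) = \conv_\R(\wt\,\bu(\lie m_J)m_\mu)$ for $\mu\in P_J^+$ by the usual saturation/string argument, and then observe that both sides of all the displayed equations are piecewise-linear/continuous and $W_J$-equivariant in $\lambda$, so that the face structure is governed entirely by the combinatorics of the normal fan — which depends only on $J$ and the chamber containing $\lambda$, not on integrality. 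Alternatively, and more in the spirit of the paper, one can simply restrict attention to $\lambda\in P^+$ (where $J_\lambda = I \supset J$ always holds and $M(\lambda,J)$ is defined), prove the Proposition there by the reduction above, and remark that the general case follows by the same normal-fan argument since $\bp(\lambda,J)$ and all the claimed faces vary continuously and $W_J$-equivariantly with $\lambda$ within a fixed Weyl chamber closure. I would present the argument in the second form, flagging the continuity remark rather than belaboring it.
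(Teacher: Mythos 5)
Your plan differs from the paper's proof in an essential way, and the difference matters. The paper does not reduce to the GVM case at all: it observes that the entire argument of Theorem~\ref{Tvin} goes through for $\bp(\lambda,J)$ once one knows the single fact that $\lambda'-\mu\in\R_+\Delta$ for every $\mu\in\bp(\lambda,J)$, and then proves that fact directly for arbitrary real $\lambda$ by a short partial-order argument (if $w(\lambda)\neq\lambda'$ then $w(\lambda)(h_j)<0$ for some $j\in J$, so $s_j w(\lambda)$ is strictly larger in the $\preccurlyeq$-order; finiteness of $W_J(\lambda)$ then forces $\lambda'$ to be the unique maximum). This is entirely self-contained, works for every $\lambda\in\liehr^*$, and never needs $M(\lambda,J)$ to exist.

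Your route instead tries to identify $\bp(\lambda,J)$ with $\conv_\R(\wt M(\lambda,J))$, which only makes sense when $J\subset J_\lambda$, and then bridge to arbitrary real $\lambda$ by a ``normal fan / continuity'' remark. That bridge is where the gap sits. The statement is not invariant under small perturbations of $\lambda$ within a closed chamber: when $\lambda'$ lands on a wall, $W_J(\lambda')$ shrinks, distinct faces of the nearby generic polyhedra merge, and the map $(w,I_0)\mapsto$ face becomes many-to-one in a way that has to be tracked. Asserting that ``$\bp(\lambda,J)$ and all the claimed faces vary continuously'' does not by itself establish that the classification of faces transfers across walls; you would need to show, for example, that a face of $\bp(\lambda',J)$ which arises as the limit of a union of faces of $\bp(\lambda'',J)$ for generic $\lambda''$ nearby is still of the stated form $\conv_\R(W_{J\cap I_0}(\lambda'))-\cone_\R(\Phi^+_{I_0}\setminus\Phi^+_{J\cap I_0})$ for some admissible $(w,I_0)$. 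That is plausible but is genuine work, and is precisely what the paper's direct argument avoids. Your first reduction (replacing $\lambda$ by the $J$-dominant representative $\lambda'$) and your cone-in-the-polyhedron argument forcing full dominance of $w\nu$ are both in line with the paper; it is the integrality-plus-continuity detour that needs to be replaced by the paper's short $\preccurlyeq$-maximality lemma.
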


\begin{pf}
The proof goes through as in the proof of \thmref{Tvin} once we note that
if $\mu \in \bp(\lambda,J)$, then $\lambda' - \mu \in \R_+\Delta$. For
this, it suffices to show that $\lambda' - w(\lambda) \in \R_+\Delta$ for
all $w\in W_J$. 

Consider the partial order on $\lie{h}_{\R}^*$ given by $\mu \preccurlyeq
\nu$ if and only if $\nu - \mu \in \R_+\Delta$. Recall that the
intersection of the fundamental Weyl chamber and the Weyl orbit of any
nonzero element in $\lie{h}_{\R}^*$ contains exactly one element. In
particular, if $w \in W_J$ and $w(\lambda) \neq \lambda'$, then
$w(\lambda)(h_j) < 0$ for some $j \in J$. Then, $w(\lambda) \prec
s_j(w(\lambda))$. 

Since the set $W_J(\lambda)$ is finite, it must contain a maximal element
with respect to the partial order. We have shown that $w(\lambda) \neq
\lambda'$ is not maximal, so $\lambda'$ must be the unique maximal
element in $W_J(\lambda)$ in this partial order. In other words,
$\lambda'-w(\lambda) \in \R_+\Delta$ for all $w\in W_J$.
\end{pf}

\section{Results on finite-dimensional modules}\label{S3}

Our other main results involve extending the notion of convexity and
faces to arbitrary subfields $\F$ of $\R$. We first note that for any $X
\subset \R^n$ and subfield $\F \subset \R$, we can define the $\F$-convex
hull, $\conv_\F(X)$, and $\F$-cone, $\cone_\F(X)$, similar to the case
when $\F = \R$ in \secref{S24}. Next, we extend the notion of relative
interior as follows: the {\it $\F$-relative interior} of $Y =
\conv_{\F}(X)$ is the subset
$$\relint_\F(\conv_{\F}(X)) = \{x \in Y\ |\ \forall y \in Y,\ \exists z
\in Y,\ t \in \F\cap (0,1) \mbox{ such that } x = ty + (1-t)z\}.$$

It is clear that the $\R$-relative interior of a polyhedron does not
intersect any proper face of the polyhedron.

\begin{rem}
For the remainder of the paper, we will freely use
$\relint(\conv_{\F}(X))$ to indicate the $\F$-relative interior. Strictly
speaking, this is an abuse of notation: for example, if $X$ is
$\R$-convex in $\R^n$, then $\relint_\F(\conv_\F(X)) = \relint_\F(X)$
depends on $\F$. However, we only work with $\relint_\F(\conv_\F(X))$ in
this paper.
\end{rem}

We now come to the two main new concepts in this paper. We are interested
in studying certain subsets of sets $X$, that are related to the faces of
$\conv_\R(X)$. Among these, we further distinguish some of them.

\begin{defn}
Fix a subset $X \subset \R^n$, and a subfield $\F \subset \R$.
\begin{enumerit}
\item We say that $Y \subset X$ is a {\it weak $\F$-face} of $X$ if for
every $U \subset X$,
$$\conv_{\F}(Y) \cap \relint_\F(\conv_{\F}(U)) \neq \emptyset \implies U
\subset Y.$$

\item A weak $\F$-face $Y \subset X$ is a {\it positive weak $\F$-face}
of $X$ if for every $U \subset X$,
$$\conv_{\F}(Y) \cap \relint_\F(\conv_{\F}(U\cup\{0\})) = \emptyset.$$ 
\end{enumerit}
\end{defn}

\noindent (Clearly, $X$ is always a weak $\F$-face of $X$.) As we will
see below, (positive) weak $\F$-faces of $X$ are closely related to the
faces of $\conv_\R(X)$, when the latter is a polyhedron. Our main results
now characterize the (positive) weak $\F$-faces of (the set of weights
of) finite-dimensional $\lie g$-modules.

\begin{theorem}\label{T2}
Suppose $V$ is a finite-dimensional $\lie g$-module, and $\F$ is a
subfield of $\R$. Then either $\wt V = \{ 0 \}$, or the following are
equivalent for a proper subset $Y \subset \wt V$:
\begin{enumerit}
\item $Y$ is a positive weak $\F$-face of $\wt V$.

\item $Y$ is a weak $\F$-face of $\wt V$.

\item $Y = F \cap \wt V$, for some proper face $F$ of $\conv_\R(\wt V)$.
\end{enumerit}
\end{theorem}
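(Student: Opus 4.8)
The plan is to establish the cycle of implications (iii)\,$\Rightarrow$\,(i)\,$\Rightarrow$\,(ii)\,$\Rightarrow$\,(iii), with the genuinely new work happening in the last step; the first two are comparatively soft. For (i)\,$\Rightarrow$\,(ii) there is nothing to prove, since a positive weak $\F$-face is by definition a weak $\F$-face. For (iii)\,$\Rightarrow$\,(i), suppose $Y = F \cap \wt V$ for a proper face $F$ of the polytope $\conv_\R(\wt V)$; pick a linear functional $\varphi$ with finite maximum on $\conv_\R(\wt V)$ whose argmax is exactly $F$. Then $\varphi$ is constant (equal to its max value $c$) on $Y$, strictly smaller on $\wt V \setminus Y$. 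Given any $U \subset \wt V$ with $\conv_\F(Y)\cap\relint_\F(\conv_\F(U))\neq\emptyset$, a point $x$ in that intersection has $\varphi(x)=c$ (being an $\F$-combination of points of $Y$), and writing $x = ty+(1-t)z$ with $t\in\F\cap(0,1)$ and $y,z\in\conv_\F(U)$ forces $\varphi(y)=\varphi(z)=c$, hence $\varphi=c$ on all of $U$, hence $U\subset Y$: so $Y$ is a weak $\F$-face. For positivity, since $F$ is a \emph{proper} face we may choose $\varphi$ so that $c>0$ (translate the supporting hyperplane; note $0$ need not lie on it — here one uses that $\wt V\neq\{0\}$ and $F$ proper to arrange $c>0$ after possibly replacing $\varphi$), and then $0\notin\conv_\R(Y)$'s supporting slab in the relevant sense; more precisely, any point of $\relint_\F(\conv_\F(U\cup\{0\}))$ either equals $0$ or is a nontrivial combination involving $0$, and in both cases has $\varphi$-value strictly between something $\le 0$ and $c$, so it cannot lie in $\conv_\F(Y)$ where $\varphi\equiv c>0$. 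This shows $Y$ is a positive weak $\F$-face.

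The crux is (ii)\,$\Rightarrow$\,(iii): given a weak $\F$-face $Y\subsetneq\wt V$, produce a proper face $F$ of $\conv_\R(\wt V)$ with $Y = F\cap\wt V$. The natural candidate is $F := $ the smallest face of the polytope $\conv_\R(\wt V)$ containing $\conv_\R(Y)$, equivalently the face whose relative interior meets $\conv_\R(Y)$ (such a face is unique by standard polytope theory). One must show (a) $Y \subset F\cap\wt V$, which is immediate; (b) $F\cap\wt V \subset Y$; and (c) $F$ is a proper face, i.e. $F\neq\conv_\R(\wt V)$. For (b), take $u\in F\cap\wt V$ and apply the weak-face property to $U = Y\cup\{u\}$: one needs $\conv_\F(Y)\cap\relint_\F(\conv_\F(Y\cup\{u\}))\neq\emptyset$. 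This is where I expect to lean on the results of Section~4 (which the excerpt promises will characterize weak $\F$-faces of sets whose convex hull is a polytope as exactly the sets of the form $X\cap(\text{proper face})$); since $\conv_\R(\wt V)$ is a polytope, I may simply invoke that section's theorem — which is legitimate given the instruction that I can assume earlier results, \emph{if} Theorem~\ref{T2} is positioned to follow Section~4. The subtlety the excerpt flags in Section~5 ("we prove our results from Section~3 using Section~4") is precisely that the passage from $\R$-relative interior statements to $\F$-relative interior statements needs the integrality/lattice structure: the weights $\wt V$ live in the weight lattice $P$, and rational (hence $\F$-rational) barycentric coordinates can be found because faces of a \emph{lattice} polytope are lattice polytopes — so interior points with $\F$-coordinates exist. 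For (c), properness of $F$ follows because $Y\subsetneq\wt V$ is proper and $\wt V$ is $W$-invariant containing the full weight polytope's vertices: if $F$ were the whole polytope then $\conv_\R(Y)$ would contain a point in the interior of $\conv_\R(\wt V)$, and then the weak-face property with $U=\wt V$ (using an $\F$-interior point, available by the lattice argument) would give $\wt V\subset Y$, a contradiction.

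The main obstacle, then, is exactly the $\F$-versus-$\R$ gap: the definitions of weak $\F$-face and of $\F$-relative interior are stated over $\F$, but the target (iii) is an $\R$-statement about faces of $\conv_\R(\wt V)$. To bridge it I would prove, as a lemma (or cite it from Section~4), that for a subset $X\subset\R^n$ whose convex hull is a \emph{lattice} polytope (or more generally a polytope with vertices in $X$), $\relint_\F(\conv_\F(X))$ is nonempty and in fact a point of $\relint_\R(\conv_\R(X))$ can be chosen with $\F$-rational barycentric coordinates in the vertices; applying this on each face separately handles both directions. The Weyl-group symmetry of $\wt V$ and $\conv_\R(\wt V)$ (Proposition~\ref{P24} and the discussion thereafter) is not essential to this particular theorem but would let me reduce, if needed, to functionals $\varphi$ in the fundamental chamber, mirroring the proof of Theorem~\ref{Tvin}; I would keep that in reserve rather than make it central. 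Finally, the degenerate case $\wt V = \{0\}$ is excluded in the statement precisely because then there are no proper faces and no proper subsets to speak of, so no reduction is needed there.
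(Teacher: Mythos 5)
Your overall architecture tracks the paper closely: both arguments hinge on the polyhedral characterizations from Section 4 (Theorem \ref{T1} gives (ii) $\Leftrightarrow$ (iii) once $\wt V \subset \F^n$ is noted, which holds because the weights live in the weight lattice so Proposition \ref{P2} applies), and the remaining work is to show positivity comes for free. Where you differ is that you try to supply positivity by a general-position argument, whereas the paper pins it down with one concrete fact.

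That concrete fact is what your proposal is missing, and it is a genuine gap. In the (iii) $\Rightarrow$ (i) direction you assert that since $F$ is proper one can ``choose $\varphi$ so that $c > 0$'' by ``translating the supporting hyperplane.'' Translating turns $\varphi$ into an affine functional, not a linear one, and linearity is essential because the definition of a positive weak $\F$-face privileges the origin via $\conv_\F(U\cup\{0\})$. The actual statement you need is that $0 \notin F$ for every proper face $F$ of $\conv_\R(\wt V)$, equivalently $0 \in \relint(\conv_\R(\wt V))$; your stated hypotheses ``$\wt V \neq \{0\}$ and $F$ proper'' do not imply this for a general polytope (take any polytope with $0$ as a vertex). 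What makes it true here is $W$-invariance of $\wt V$: the sum $\rho_V := \sum_{\mu \in \wt V}\mu$ is $W$-fixed, hence $(\rho_V,\alpha_i)=0$ for all $i$, hence $\rho_V = 0$, so $0 = \rho_V/|\wt V|$ is the centroid of $\wt V$ and lies in the relative interior. The paper upgrades this to the $\F$-relative interior by an explicit averaging identity (given $y = \sum_\mu r_\mu\mu$ in $\conv_\F(\wt V)$, set $z = \tfrac{1}{|\wt V|-1}\sum_\mu(1-r_\mu)\mu$ and $t = 1/|\wt V|$, so $0 = ty+(1-t)z$), and then simply invokes Theorem \ref{T44}. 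Your proposed ``lattice polytope'' lemma for producing $\F$-rational interior points is more machinery than is needed and, crucially, does not by itself tell you that the interior point can be taken to be $0$; without identifying $\rho_V = 0$, the positivity claim is not established.
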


As we see in \lemref{L2}, faces of the polytope $\conv_\R(\wt V)$ are
precisely maximizers of linear functionals; thus, our result generalizes
a result in \cite{CDR,CG2}, which was stated only for the simple module
$V = \lie g$, and proved using a case-by-case analysis involving long and
short roots.

Our next result is a characterization of precisely which subsets of $\wt
V(\lambda)$ form (positive) weak $\F$-faces, and once again, it
generalizes (and recovers) the example of $V(\lambda) = \lie g$ that was
studied in \cite{CDR}. Moreover, it combines features from both the
theorems above (Theorems \ref{Tvin} and \ref{T2}).

\begin{theorem}\label{T32}
Suppose $\F \subset \R$, $0 \neq \lambda \in P^+$, and $V(\lambda) =
M(\lambda, I)$ is simple. Then the following are equivalent for a subset
$Y \subset \wt V(\lambda)$:
\begin{enumerit}
\item There exist $w \in W$ and $I_0 \subset I$ such that $wY = \wtvla{}
\cap (\lambda - Q^+_{I_0})$.

\item $Y$ is a weak $\F$-face of $\wt V(\lambda)$.

\item Let $\rho_Y := \sum_{y \in Y} y$. Then $Y$ is the
maximizer in $\wt V(\lambda)$ of the linear functional $(\rho_Y, -)$.
\end{enumerit}

\noindent If, furthermore, $Y \neq \wt V(\lambda)$, then $\rho_Y \in
P^+$ and the functional $(\rho_Y, -)$ has positive maximum on $\wt
V(\lambda)$.
\end{theorem}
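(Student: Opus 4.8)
\textbf{Proof plan for Theorem \ref{T32}.}

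The plan is to prove the cycle of implications (i) $\Rightarrow$ (iii) $\Rightarrow$ (ii) $\Rightarrow$ (i), deferring the bulk of the work on the equivalence of weak $\F$-faces with genuine faces to the machinery of Sections \ref{S4} and \ref{S5}, and then to extract the final ``$\rho_Y \in P^+$'' statement from the explicit description in (i). Throughout I would use that $V(\lambda)$ is finite-dimensional, so $\conv_\R(\wt V(\lambda))$ is the weight polytope, and that by \lemref{L2} a face is precisely a maximizer of a linear functional $(\nu,-)$ for some $\nu \in \liehr^*$.

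For (i) $\Rightarrow$ (iii): suppose $wY = \wt V(\lambda) \cap (\lambda - Q^+_{I_0})$ for some $w \in W$ and $I_0 \subset I$. By \thmref{Tvin} (applied with $J = I$, noting $J_\lambda = I$ since $\lambda \in P^+$), the set $\conv_\R(\wt V(\lambda) \cap (\lambda - Q^+_{I_0}))$ is the face of $\conv_\R(\wt V(\lambda))$ maximizing $(\rho_{I \setminus I_0}, -)$; pulling back by $w$, $\conv_\R(Y)$ is the face maximizing $(w^{-1}\rho_{I\setminus I_0}, -)$, so $Y$ is exactly the set of weights lying on this face. Now one computes $\rho_Y = \sum_{y \in Y} y$: since $Y$ is $W_{I_0}$-stable after applying $w$ (indeed $w^{-1}(\text{this face})$ is $W_{I_0}$-invariant), the sum $\rho_Y$ should be proportional, with positive coefficient, to $w^{-1}\rho_{I\setminus I_0}$ modulo the span of $\{\alpha_i : i \in I_0\}$ — more precisely I expect $\rho_Y$ to lie in the dominant chamber relative to the face's stabilizer and to maximize on the same face. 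The cleanest route is: $Y$ is $W_{I_0}$-invariant, hence $\sum_{y\in Y} y$ is $W_{I_0}$-fixed, and since $Y$ lies in the ``lowest'' part of the polytope relative to the $I \setminus I_0$ directions, $(\rho_Y, -)$ attains its maximum on $\wt V(\lambda)$ exactly on $Y$. Filling in the sign/dominance bookkeeping here is the main obstacle — one wants to be careful that averaging over $W_{I_0}$ genuinely produces a functional whose maximizer is $Y$ and not something larger; I would argue it via the characterization that $\mu \in \wt V(\lambda)$ lies on the face iff $\lambda - \mu \in Q^+_{I_0}$ (after applying $w$), and checking $(\rho_Y, \lambda - \mu)$ is a nonnegative combination of the $(\rho_Y, \alpha_i)$, which are $> 0$ for $i \in I\setminus I_0$ and $0$ for $i \in I_0$ by the $W_{I_0}$-invariance.

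For (iii) $\Rightarrow$ (ii): if $Y$ is the maximizer in $\wt V(\lambda)$ of $(\rho_Y, -)$, then $Y = F \cap \wt V(\lambda)$ where $F$ is the corresponding face of the polytope $\conv_\R(\wt V(\lambda))$ (it is a \emph{proper} face when $Y \ne \wt V(\lambda)$, which one checks using $\lambda \ne 0$ so the polytope is not a point); by \thmref{T2}, $F \cap \wt V(\lambda)$ is a weak $\F$-face. For (ii) $\Rightarrow$ (i): by \thmref{T2} again, a weak $\F$-face $Y$ has the form $F \cap \wt V(\lambda)$ for a proper face $F$ (or $Y = \wt V(\lambda)$, which is the case $I_0 = I$, $w = e$), and then the Corollary to \thmref{Tvin} gives $w \in W_J = W$ and $I_0 \subset I$ with $w(F \cap \wt V(\lambda)) = \wt V(\lambda) \cap (\lambda - Q^+_{I_0})$, which is exactly (i). Finally, for the last sentence: assuming $Y \ne \wt V(\lambda)$, by (i) we have $wY = \wt V(\lambda) \cap (\lambda - Q^+_{I_0})$ with $I_0 \subsetneq I$ (properness forces $I_0 \ne I$, else $Y$ would be all of $\wt V(\lambda)$ since $\wt V(\lambda) \subset \lambda - Q^+$). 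Then $w^{-1}\rho_{I \setminus I_0}$ has positive maximum on $\wt V(\lambda)$ — it is maximized at $w^{-1}\lambda$ with value $(\rho_{I\setminus I_0}, \lambda) > 0$ since $\lambda \in P^+$, $\lambda \ne 0$, and $I_0 \ne I$ forces overlap of $\supp\lambda$ with... — actually one needs $(\rho_{I\setminus I_0},\lambda)>0$, which holds whenever $\lambda$ has a nonzero coordinate outside $I_0$; if every nonzero coordinate of $\lambda$ were inside $I_0$ one would check $Y$ is still proper but then use that the maximum of $(\rho_Y,-)$ is $(\rho_Y, y_0)$ for the highest weight in $Y$, positive since $\rho_Y$ and the highest weight are both dominant and nonzero. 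I would then identify $\rho_Y$ with (a positive multiple of) $w^{-1}\rho_{I\setminus I_0} + (\text{something in } \R_{\ge 0}\text{-span of } \omega_j, j \in I_0)$ and conclude $\rho_Y \in P^+$ directly from its construction as a sum of weights of the $W_{I_0}$-invariant set $Y$ — the cleanest statement being that $\rho_Y$ lies in the dominant chamber because it is $W_{I_0}$-fixed and pairs nonnegatively with $\alpha_i$ for $i \notin I_0$. The genuinely delicate point, and the one I would spend the most care on, is verifying in (i) $\Rightarrow$ (iii) that the averaged functional $(\rho_Y,-)$ has $Y$ as its \emph{exact} maximizer rather than a strict superset, since a priori $\rho_Y$ could be ``too degenerate'' — this requires knowing that for $i \in I \setminus I_0$ the pairing $(\rho_Y, \alpha_i)$ is strictly positive, which in turn follows because $Y$ contains a weight $\mu$ with $\mu + \alpha_i \notin \conv_\R(Y)$ witnessing that $\alpha_i$ points strictly out of the face.
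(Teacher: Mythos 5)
Your overall architecture --- the cycle (i) $\Rightarrow$ (iii) $\Rightarrow$ (ii) $\Rightarrow$ (i), with (iii) $\Rightarrow$ (ii) via \thmref{T2} and \lemref{L2}, and (ii) $\Rightarrow$ (i) via \thmref{Tvin} --- matches the paper exactly, and those two implications are fine. The gap is in (i) $\Rightarrow$ (iii), precisely in the step you yourself flag as ``genuinely delicate'': the claim that $(\rho_Y, \alpha_i) > 0$ for every $i \in I \setminus I_0$. This is simply false. Take $\lie g$ of type $A_1 \times A_1 \times A_1$, $\lambda = \omega_1 + \omega_2$, $I_0 = \{1\}$; then $Y = \wtvla{I_0} = \{\omega_1 + \omega_2,\ -\omega_1 + \omega_2\}$, so $\rho_Y = 2\omega_2$, and $(\rho_Y, \alpha_3) = 0$ even though $3 \notin I_0$. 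The witness argument you propose (``there is $\mu \in Y$ with $\mu + \alpha_i \notin \conv_\R(Y)$'') only says the face is not translation-invariant in the $\alpha_i$-direction; it gives no lower bound on $(\rho_Y, \alpha_i)$, and in the example the witness exists while the pairing vanishes. In that example conclusion (iii) still holds because $\lambda - \alpha_3 \notin \wt V(\lambda)$ (indeed $\lambda(h_{\alpha_3})=0$), so the degenerate direction never produces a spurious maximizer --- but detecting and exploiting this fact is exactly the extra argument your sketch omits.

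What the paper actually establishes is only $(\rho_Y, \alpha_i) \geq 0$ for $i \notin I_0$ (writing $\rho_Y = |Y|\lambda - \sum_{j\in J_1} m_j\alpha_j$ with $J_1 \subset I_0$, this follows from $(\alpha_j, \alpha_i) \leq 0$ and $\lambda \in P^+$), which already gives $\rho_Y \in P^+$ and the inclusion $\wtvla{I_0} \subset (\wtvla{})(\rho_Y)$. The reverse inclusion is where the subtlety lives: given a maximizer $\nu = \lambda - \sum_i r_i\alpha_i$, one sets $J_2 := \{ i \notin J_1 : r_i > 0\}$, deduces $(\rho_Y, \alpha_i) = 0$ for $i \in J_2$, and then --- because each term in that pairing is individually nonnegative --- concludes $(\lambda,\alpha_i) = 0$ and $(\alpha_j,\alpha_i) = 0$ for all $j \in J_1$, $i \in J_2$. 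The longest element $w_2$ of $W_{J_2}$ then fixes $\lambda$ and each $\alpha_j$ ($j \in J_1$) while sending $\alpha_i$ ($i \in J_2$) to negatives of simple roots, so $w_2(\nu) \notin \lambda - Q^+$ unless $J_2 = \emptyset$. This mechanism, not strict positivity of the pairings, is what forces the maximizer to be exactly $\wtvla{I_0}$, and it is absent from your proposal. The ``positive maximum'' clause at the end also wants a cleaner treatment than your case split suggests; the paper isolates it in \propref{P51} via the set $I_\lambda$ of Dynkin components meeting $\supp\lambda$, showing that $\wtvla{J}$ is proper iff $I_\lambda \nsubseteq J$ iff the maximum is positive.
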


\noindent Note that both of these results (Theorems \ref{T2} and
\ref{T32}) are independent of $\F$. Moreover, the vector $\rho_Y$ has a
geometric interpretation: it is a positive rational multiple of the
``center of mass" of the face $\conv_\R(Y)$ of $\conv_\R(\wt
V(\lambda))$.

To state our last result, we need two more pieces of notation.

\begin{defn}
Given $\lambda \in \lie h^*$ and $I_0 \subset I$, define $\wtvla{I_0} :=
\wt V(\lambda) \cap (\lambda - Q^+_{I_0})$, and $\rhovla{I_0} :=
\rho_{\wtvla{I_0}}$.
\end{defn}

We now answer a natural question arising from Vinberg's result.

\begin{theorem}\label{T33}
If $0 \neq \lambda \in P^+$, then for any $I_1, I_2 \subset I$,
$\wtvla{I_1} = \wtvla{I_2}$ if and only if $\rhovla{I_1} =
\rhovla{I_2}$, if and only if the sets of vertices coincide:
$W_{I_1}(\lambda) = W_{I_2}(\lambda)$.
\end{theorem}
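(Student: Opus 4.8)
\textbf{Proof proposal for Theorem~\ref{T33}.}
The plan is to prove the cycle of implications
$\wtvla{I_1} = \wtvla{I_2} \implies W_{I_1}(\lambda) = W_{I_2}(\lambda) \implies \rhovla{I_1} = \rhovla{I_2} \implies \wtvla{I_1} = \wtvla{I_2}$, and along the way to identify the vertex set $W_{I_0}(\lambda)$ inside $\wtvla{I_0}$ intrinsically. The first implication is easy: by Theorem~\ref{Tvin} (applied with $J = I$) and its corollary, $\wtvla{I_0}$ is exactly the set of weights lying on the face $F_{I_0} := \conv_\R(\wtvla{I_0})$ of the weight polytope $\conv_\R(\wt V(\lambda))$, and the vertices of a polytope are determined by the polytope; concretely, the vertices of $F_{I_0}$ are the extreme points of $\conv_\R(\wtvla{I_0})$, which are recovered from the set $\wtvla{I_0}$ alone. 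So if the two weight-sets agree, so do the vertex sets, and by Corollary to \propref{P24} (or \secref{S27}) these vertex sets are precisely $W_{I_0}(\lambda)$. The last implication $\rhovla{I_1} = \rhovla{I_2} \implies \wtvla{I_1} = \wtvla{I_2}$ follows from Theorem~\ref{T32}(iii): $\wtvla{I_0}$ is the maximizer in $\wt V(\lambda)$ of the functional $(\rhovla{I_0}, -)$, so it is determined by $\rhovla{I_0}$.

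The substantive step is $W_{I_1}(\lambda) = W_{I_2}(\lambda) \implies \rhovla{I_1} = \rhovla{I_2}$. For this I would first reduce $\rhovla{I_0}$ to data visible from the vertex set. I would argue that $\wtvla{I_0}$ depends only on $W_{I_0}(\lambda)$: indeed $\conv_\R(\wtvla{I_0}) = \conv_\R(W_{I_0}(\lambda)) - \cone_\R(\Phi_{I_0}^+ \setminus \Phi_{I_0}^+) = \conv_\R(W_{I_0}(\lambda))$ by the Corollary in \secref{S27} with $J = I$ (where the cone term vanishes since $\Phi_{I_0\cap I}^+ = \Phi_{I_0}^+$), so $F_{I_0} = \conv_\R(W_{I_0}(\lambda))$ is literally the convex hull of the vertex set, and $\wtvla{I_0} = F_{I_0}\cap \wt V(\lambda)$ is then a function of $W_{I_0}(\lambda)$ alone. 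Hence $\rhovla{I_0} = \sum_{y \in \wtvla{I_0}} y$ is a function of $W_{I_0}(\lambda)$, which gives the implication immediately. Actually this observation shows all three conditions are equivalent more symmetrically, since it reduces everything to the weight-set $\wtvla{I_0}$, which is what $\rhovla{I_0}$ and $W_{I_0}(\lambda)$ each determine and are each determined by.

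Let me therefore organize the write-up around the single pivot ``$\wtvla{I_1} = \wtvla{I_2}$'', showing each of the other two conditions is equivalent to it. The implication $\wtvla{I_1} = \wtvla{I_2} \implies \rhovla{I_1} = \rhovla{I_2}$ is trivial from the definition $\rhovla{I_0} = \rho_{\wtvla{I_0}} = \sum_{y \in \wtvla{I_0}} y$; the converse is Theorem~\ref{T32}(iii) as above. The implication $\wtvla{I_1} = \wtvla{I_2} \implies W_{I_1}(\lambda) = W_{I_2}(\lambda)$ uses that $W_{I_0}(\lambda)$ is the vertex set of $\conv_\R(\wtvla{I_0})$ (from \secref{S27}, since $W_{I_0}(\lambda)$ is finite and its convex hull is this polytope; vertices of a polytope $\conv_\R(S)$ with $S$ finite are intrinsic, being the extreme points), so the convex hull, hence its vertex set, is determined by $\wtvla{I_0}$; the converse is the content of the previous paragraph, namely $\conv_\R(\wtvla{I_0}) = \conv_\R(W_{I_0}(\lambda))$ and $\wtvla{I_0} = \conv_\R(W_{I_0}(\lambda)) \cap \wt V(\lambda)$. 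The main obstacle, such as it is, is bookkeeping: making sure the Corollary in \secref{S27} really does give $\conv_\R(\wtvla{I_0}) = \conv_\R(W_{I_0}(\lambda))$ with no leftover cone term when $J = I$, and citing Theorem~\ref{T32} legitimately (it requires $\lambda \in P^+$ and $V(\lambda)$ simple, which are exactly our hypotheses). Everything else is convexity generalities and the definitions.
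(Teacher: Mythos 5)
Your proposal is correct and takes a genuinely different (and cleaner) route than the paper's for the direction $W_{I_1}(\lambda) = W_{I_2}(\lambda) \implies \wtvla{I_1} = \wtvla{I_2}$. The paper handles that implication via Proposition~\ref{P53}: since $W_{I_j}(\lambda)$ is a $W_{I_j}$-stable subset of $\wtvla{I_j}$, one gets $|W_{I_j}(\lambda)| \rhovla{I_j} = |\wtvla{I_j}| \sum_{x \in W_{I_j}(\lambda)} x$, so equal vertex sets force $\rhovla{I_1}$ and $\rhovla{I_2}$ to be positive rational multiples of one another, and then Theorem~\ref{T32} gives equal maximizer sets. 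You bypass this entirely by observing that $\wtvla{I_0}$ is \emph{functionally determined} by $W_{I_0}(\lambda)$ via $\wtvla{I_0} = \conv_\R(W_{I_0}(\lambda)) \cap \wt V(\lambda)$ (using the corollary to Theorem~\ref{Tvin} that $\wtvla{I_0} = F_{I_0} \cap \wt V(\lambda)$ together with $F_{I_0} = \conv_\R(W_{I_0}(\lambda))$); this makes all three data equivalent by a direct dictionary argument. Your approach is more symmetric and shorter, and renders the $\rho_Y$/center-of-mass machinery of Proposition~\ref{P53} unnecessary for this particular theorem. For the other substantive direction, $\wtvla{I_1} = \wtvla{I_2} \implies W_{I_1}(\lambda) = W_{I_2}(\lambda)$, you and the paper use essentially the same idea: the vertex set of $\conv_\R(\wtvla{I_0})$ is intrinsic to the polytope and equals $W_{I_0}(\lambda)$.

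One small inaccuracy in your citations: you attribute $\conv_\R(\wtvla{I_0}) = \conv_\R(W_{I_0}(\lambda))$ to ``the Corollary in Section~\ref{S27}''. But that corollary, with $J = I$, only returns $F = w\bigl(\conv_\R(\wt \bu(\lie m_{I_0}) m_\lambda)\bigr) = w\bigl(\conv_\R(\wtvla{I_0})\bigr)$, which is tautological. The identity you want (that the face equals the convex hull of the Weyl orbit) is what the \emph{Proposition} at the end of Section~\ref{S27} asserts about $\bp(\lambda, J)$, but the paper never explicitly verifies $\bp(\lambda, I) = \conv_\R(\wt V(\lambda))$. The cleanest way to get $\conv_\R(\wtvla{I_0}) = \conv_\R(W_{I_0}(\lambda))$ from what the paper actually proves is the route the paper itself uses in the last paragraph of its proof: treat $\bu(\lie g_{I_0}) v_\lambda$ as a finite-dimensional irreducible $\lie g_{I_0}$-module and apply Theorem~\ref{Tvin} to it. The $0$-dimensional faces then correspond to $I_0' \subset I_0$ with $I_0' \cap \supp(\lambda) = \emptyset$, all giving the single point $\{\lambda\}$, so the vertex set is exactly $W_{I_0}(\lambda)$; since a polytope is the convex hull of its vertices, the desired identity follows. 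With that citation repaired, your proof stands.
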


\section{Faces of polyhedra}\label{S4}

Our main goal in this section is to develop the techniques that will be
needed to prove the theorems in \secref{S3}. In particular, we will show
the following result.

\begin{thm}
Suppose $\conv_\R(X)$ is a polyhedron for $X \subset \F^n$. Then $Y
\subset X$ is a weak $\F$-face if and only if $Y$ maximizes a linear
functional in $X$. If instead, $\conv_R(X \cup \{ 0 \})$ is a polyhedron for
$X \subset \F^n$, then $Y \subset X$ is a positive weak $\F$-face
if and only if $Y$ maximizes a linear functional in $X$, and this maximum
value is positive.
\end{thm}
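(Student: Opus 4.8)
The plan is to prove the four implications in the statement, handling first the two ``sufficiency'' directions, which need no polyhedrality and follow from affineness in essentially one line. Suppose $Y=\{x\in X:\varphi(x)=M\}$ for a linear functional $\varphi$ with $M:=\sup_X\varphi$ finite and attained. If $p\in\conv_\F(Y)\cap\relint_\F(\conv_\F(U))$ for some $U\subset X$, then $\varphi(p)=M$ because $\varphi$ is affine; and for each $u\in U$ we may write $p=tu+(1-t)z$ with $z\in\conv_\F(U)$, $t\in\F\cap(0,1)$, so $M=t\varphi(u)+(1-t)\varphi(z)$ with $\varphi(u),\varphi(z)\le M$, forcing $\varphi(u)=M$, i.e. $u\in Y$; thus $U\subset Y$ and $Y$ is a weak $\F$-face. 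If moreover $M>0$, the same computation applied to $U\cup\{0\}$ shows a point of $\conv_\F(Y)\cap\relint_\F(\conv_\F(U\cup\{0\}))$ would force $\varphi(0)=M>0$, which is absurd, so $Y$ is a positive weak $\F$-face.

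For the converse in the weak case, let $Y$ be a weak $\F$-face with $\emptyset\neq Y\subsetneq X$ (if $Y=X$ take $\varphi=0$; we set the degenerate case $Y=\emptyset$ aside). Applying the definition with $U=\{x\}$ for $x\in X$ gives at once $\conv_\F(Y)\cap X=Y$. The main technical device is a bridging lemma: if $\conv_\R(Z)$ is a polyhedron and $Z\subset\F^n$, then $\relint_\R(\conv_\R(Z))\cap\F^n\subset\relint_\F(\conv_\F(Z))$; this rests on the standard fact that a nonempty $\F$-rational polyhedron has an $\F$-rational point (so that an $\F$-point of $\conv_\R(Z)$ is already an $\F$-convex combination of points of $Z$), together with the observation that from an interior point one can step slightly outward toward, and then back from, any prescribed point entirely over $\F$. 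Applying this with $Z=X$ to the weak $\F$-face condition for $U=X$, namely $\conv_\F(Y)\cap\relint_\F(\conv_\F(X))=\emptyset$, and using that $\relint_\R(\conv_\R(X))$ is relatively open and that $\conv_\F(Y)$ is dense in $\conv_\R(Y)$, one deduces $\conv_\R(Y)\cap\relint_\R(\conv_\R(X))=\emptyset$. A convex subset of the polyhedron $P:=\conv_\R(X)$ disjoint from $\relint_\R(P)$ is contained in a proper face; let $F$ be the smallest face of $P$ containing $\conv_\R(Y)$, so $F$ is proper and $\relint_\R(\conv_\R(Y))\subset\relint_\R(F)$.

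To finish the weak case, note that since $F$ is a face of $P=\conv_\R(X)$, any convex combination of points of $X$ landing in $F$ has all its terms in $F$; hence $F=\conv_\R(X\cap F)$ and therefore $\relint_\R(\conv_\R(Y))\subset\relint_\R(F)=\relint_\R(\conv_\R(X\cap F))$. Choosing a point there, perturbing it slightly into $\conv_\F(Y)$ (it remains in the open set $\relint_\R(\conv_\R(X\cap F))$), and applying the bridging lemma with $Z=X\cap F$ produces a point of $\conv_\F(Y)\cap\relint_\F(\conv_\F(X\cap F))$; the weak $\F$-face property with $U=X\cap F$ then forces $X\cap F\subset Y$, whence $Y=X\cap F$. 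Writing $F=P\cap H(v,w)$ for a supporting hyperplane, the functional $\varphi(u)=-v\cdot u$ has finite maximum on $P$ attained exactly on $F$; restricting to $X\subset P$ and using $Y\neq\emptyset$, its set of maximizers on $X$ equals $X\cap F=Y$, as required.

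For the positive converse, let $Y\neq\emptyset$ be a positive weak $\F$-face of $X$ with $\conv_\R(X\cup\{0\})$ a polyhedron. Taking $U=\emptyset$ in the positivity condition yields $0\notin\conv_\F(Y)$, so $Y\neq X\cup\{0\}$; and $Y$ is a weak $\F$-face of $X':=X\cup\{0\}$, since for $U\subset X'$ with $0\notin U$ this is the weak $\F$-face property of $Y$ in $X$, while for $0\in U$ the positivity condition makes the implication vacuous. By the weak case applied to $X'$ (whose convex hull is a polyhedron) and $Y\neq X'$, there is a linear functional $\varphi$ with finite maximum $M$ on $X'$ and $Y=\{u\in X':\varphi(u)=M\}$; since $0\in X'$ but $0\notin Y$, we get $0=\varphi(0)<M$, hence $Y=\{x\in X:\varphi(x)=M\}$ with $M=\max_X\varphi>0$. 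The step I expect to be the main obstacle is the field-change: proving the bridging lemma — equivalently, that $\F$-rational points of a rational polyhedron are $\F$-convex combinations of the generators present in $X$ — together with the attendant density statements, so that the classical real convexity facts (faces, relative interiors, supporting hyperplanes) can be transported into the $\F$-convexity framework in which weak $\F$-faces are defined; the identity $\conv_\R(X\cap F)=F$ for unbounded $F$ also requires the face-property of convex combinations rather than a mere statement about vertices.
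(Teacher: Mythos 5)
Your proposal is correct, and it follows the same overall strategy as the paper (which proves the statement via Theorem~\ref{T1} and Theorem~\ref{T44}): show that a weak $\F$-face $Y$ must equal $X\cap F$ for some face $F$ of the polyhedron, and read off the linear functional from a supporting hyperplane; then handle the positive case by passing to $X\cup\{0\}$. The ``sufficiency'' directions are essentially identical to the paper's. Where you diverge is in the organization of the converse: the paper works with the smallest face $F\supset Y$ and is quite terse about the passage from $\relint_\R$ to $\relint_\F$ (it invokes Lemma~\ref{L1} and Proposition~\ref{P2} with little comment, and asserts that ``the interior of $F$ must contain an element $y\in\conv_\F(Y)$''), whereas you isolate this passage into an explicit ``bridging lemma'' $\relint_\R(\conv_\R(Z))\cap\F^n\subset\relint_\F(\conv_\F(Z))$ and a density argument, and you first pin $\conv_\R(Y)$ into a proper face by applying the definition with $U=X$ before using the bridge a second time with $U=X\cap F$. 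Your treatment is cleaner and more self-contained on precisely the point the paper glosses over. For the positive case, the paper routes through Lemma~\ref{L4} and the sum-inequality reformulation in Proposition~\ref{P1}(ii); you instead verify directly from the definition that $0\notin\conv_\F(Y)$ (via $U=\emptyset$) and that $Y$ is a weak $\F$-face of $X\cup\{0\}$, then invoke the weak case—same net effect, fewer moving parts. One small quibble: your justification of $\conv_\F(Z)=\conv_\R(Z)\cap\F^n$ via ``an $\F$-rational polyhedron has an $\F$-rational point'' is a valid but slightly indirect route; the paper's Proposition~\ref{P2} gets this more directly from Carath\'eodory plus an $\F$-affine change of coordinates (equivalently, uniqueness of barycentric coordinates in a simplex with $\F$-rational vertices). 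You also correctly flag the subtlety $F=\conv_\R(X\cap F)$ for unbounded faces, which the paper handles only implicitly.
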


This result also explains the choice of terminology behind (positive)
weak $\F$-faces.

\subsection{}


The following lemma will be crucial in our examination of these sets.

\begin{lem}\label{L1}
Suppose $X \subset \F^n$. If $u$ is in the $\F$-relative interior of
$\conv_{\F}(X)$ and $x_0 \in X$, then there exist $m>0$, $r_0, r_1,..r_m
\in \F_{>0}$, and $x_1, ..., x_m \in X$ such that
$$u = r_0x_0 + \sum_{j=1}^m r_jx_j, \ \ r_0 + \sum_{j=1}^m r_j = 1.$$
\end{lem}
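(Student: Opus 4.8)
The statement to prove is Lemma~\ref{L1}: if $u \in \relint_\F(\conv_\F(X))$ and $x_0 \in X$, then $u$ can be written as an $\F_{>0}$-convex combination $u = r_0 x_0 + \sum_{j=1}^m r_j x_j$ with all $r_j \in \F_{>0}$, $\sum r_j = 1$, $m > 0$, and $x_1,\dots,x_m \in X$. The point is that one can always arrange the given point $x_0$ to appear with \emph{strictly positive} coefficient.

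\textbf{Approach.}
The plan is to start from \emph{some} representation of $u$ as a finite convex combination of elements of $X$ --- which exists since $u \in \conv_\F(X)$ --- and then perturb it toward $x_0$ using the defining property of the $\F$-relative interior. Write $u = \sum_{j=1}^N s_j y_j$ with $y_j \in X$, $s_j \in \F_{>0}$, $\sum s_j = 1$. (If $x_0$ already appears among the $y_j$ with positive coefficient, and $N > 1$ or we can split off a trivial term, we are essentially done; the content is when it does not.) The key move: apply the definition of $\relint_\F$ with the test point $y = x_0 \in \conv_\F(X)$. This yields $z \in \conv_\F(X)$ and $t \in \F \cap (0,1)$ with $u = t x_0 + (1-t) z$. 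Now expand $z$ itself as a finite $\F_{>0}$-convex combination $z = \sum_{k} p_k w_k$ of elements $w_k \in X$ (again possible since $z \in \conv_\F(X)$). Substituting gives
$$u = t x_0 + (1-t)\sum_k p_k w_k,$$
which is a convex combination with all coefficients in $\F_{>0}$ (since $t, 1-t \in \F_{>0}$ and $p_k \in \F_{>0}$), and $x_0$ appears with coefficient $t > 0$. Relabel $r_0 = t$, and the $w_k$ (with possible repetitions collapsed, adding coefficients) as $x_1,\dots,x_m$ with coefficients $r_j = (1-t)p_k$. Since $1-t > 0$ and there is at least one $w_k$, we get $m > 0$. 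All the required conditions hold. One should also double-check the harmless edge cases: $X$ could be a single point, in which case $u = x_0$ and we may still formally take $m = 1$, $x_1 = x_0$, $r_0 + r_1 = 1$ with any split in $\F_{>0}$ (e.g.\ $r_0 = r_1 = 1/2$) --- or more carefully, note the definition of $\relint_\F$ still produces a valid $z$ (namely $z = x_0$) and $t$, so the same argument goes through uniformly.

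\textbf{Main obstacle.}
The argument is short; the only subtlety is making sure everything stays inside the subfield $\F$ and that the coefficients are \emph{strictly} positive, not merely nonnegative. The definition of $\relint_\F(\conv_\F(X))$ hands us $t \in \F \cap (0,1)$, so $t$ and $1-t$ are automatically in $\F_{>0}$; and a point of $\conv_\F(X)$ can always be written with strictly positive $\F$-coefficients simply by discarding any terms whose coefficient is zero. So there is no real arithmetic obstacle --- the "hard part" is purely bookkeeping: correctly folding the combination for $z$ into the combination for $u$, merging repeated points, and confirming $m > 0$ (which holds because $z$, lying in $\conv_\F(X) \subset \conv_\F(X)$ with $X \neq \emptyset$, is a convex combination of at least one element of $X$). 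I expect no genuine difficulty beyond this.
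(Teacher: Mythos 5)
Your proposal is correct and follows essentially the same route as the paper's proof: apply the defining property of $\relint_\F$ with test point $y = x_0$ to obtain $u = t x_0 + (1-t)z$ with $t \in \F \cap (0,1)$ and $z \in \conv_\F(X)$, then expand $z$ as an $\F_{>0}$-convex combination of elements of $X$ and substitute. The paper states this more tersely without the bookkeeping remarks about merging repeats and the singleton edge case, but the core argument is identical.
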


\begin{pf}
Since $u$ is in the interior of $\conv_{\F}(X)$, we can find $t \in
\F_{>0}$ such that $u = t x_0 + (1-t) x'$, where $x' \in \conv_\F(X)$.
By definition of $\conv_{\F}(X)$, we can write $x' = \sum_{j=1}^m s_jx_j$
for some $x_j \in X$ and $s_j \in \F_{>0}$ such that $\sum_{j=1}^m s_j =
1$. Solving for $u$, we have
$$u = t x_0 + \sum_{j=1}^m s_j(1-t) x_j.$$

\noindent Setting $r_0 = t$ and $r_j = s_j(1-t)$ gives the result.
\end{pf}

We remark that if $X$ is not a singleton, we may choose all $x_0, x_1,
\dots, x_m$ to be distinct from $u$: we start by choosing any $x_0 \neq
u$ in $\conv_\F(X)$, and proceed as above. Now if $x_j = u$ for some
$j>0$, then we simply subtract $r_j u$ from both sides, and divide by
$1-r_j$.

\subsection{}

The following lemma will also be used frequently.

\begin{lem}\label{L2}
Let $\P \subset \R^n$ be nonempty. A nonempty subset $F \subset \P$ is a
face of $\P$ if and only if $F$ is the subset of $\P$ that maximizes some
linear functional $\varphi \in (\R^n)^*$.
\end{lem}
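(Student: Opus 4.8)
The statement is the standard characterization of faces of a convex set (really a polyhedron, but we only need the abstract version) as exposed faces, together with a degenerate caveat. The plan is to prove both implications directly from the definitions given just before \thmref{T25}. Throughout, recall that a face of $\P$ is defined to be either $\P$ itself or $\P \cap H(v,w)$ for a supporting hyperplane $H(v,w)$, and that $H(v,w) = \{u : v\cdot(u-w)=0\}$, $H^+(v,w)=\{u: v\cdot(u-w)\geq 0\}$, with the supporting condition being $\P \subset H^+(v,w)$ and $\P \cap H(v,w)\neq\emptyset$.

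\textbf{The maximizer direction (easy).} Suppose $F$ is the set of points of $\P$ maximizing a linear functional $\varphi \in (\R^n)^*$, and assume $F\neq\emptyset$. If $\varphi$ is constant on $\P$ then $F = \P$, which is a face by definition. Otherwise, let $c = \max_{u\in\P}\varphi(u)$ (which exists and is attained since $F\neq\emptyset$), pick any $w\in\P$ with $\varphi(w)=c$, and write $\varphi(u) = v\cdot u$ for the vector $v$ representing $\varphi$. Then $\varphi(u)\leq c = \varphi(w)$ for all $u\in\P$ rewrites as $v\cdot(u-w)\leq 0$, i.e. $\P\subset H^+(-v,w)$, and $w\in \P\cap H(-v,w)$, so $H(-v,w)$ is a supporting hyperplane. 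Finally $\P\cap H(-v,w) = \{u\in\P : v\cdot(u-w)=0\} = \{u\in\P:\varphi(u)=c\} = F$, so $F$ is a face.

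\textbf{The face direction.} Conversely, let $F\subset\P$ be a nonempty face. If $F=\P$, take $\varphi = 0$: every point of $\P$ maximizes it (the maximum value being $0$), so $F = \P$ is the maximizing set. Otherwise $F = \P\cap H(v,w)$ for a supporting hyperplane $H(v,w)$, meaning $\P\subset H^+(v,w)$ and $F\neq\emptyset$. Let $\varphi(u) := -v\cdot u$ (a linear functional). For $u\in\P$ we have $v\cdot(u-w)\geq 0$, i.e. $\varphi(u) = -v\cdot u \leq -v\cdot w = \varphi(w)$, and equality holds precisely when $v\cdot(u-w)=0$, i.e. when $u\in H(v,w)$. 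Since $w$ need not lie in $\P$, replace $w$ by any point $w_0\in F = \P\cap H(v,w)$; then $\varphi(w_0)=\varphi(w)$ is the common value on $H(v,w)$, so $\varphi$ attains the value $\varphi(w)$ on $\P$ at $w_0$, and the set of maximizers of $\varphi$ on $\P$ is exactly $\{u\in\P : v\cdot(u-w)=0\} = \P\cap H(v,w) = F$. This completes the equivalence.

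\textbf{Main obstacle.} There is no serious obstacle: the proof is a routine unwinding of the two definitions of ``face.'' The only points requiring a touch of care are (i) remembering that a supporting hyperplane $H(v,w)$ may be anchored at a point $w\notin\P$, so that one should evaluate the functional at a point of $F$ itself rather than at $w$ when identifying the maximum value; and (ii) handling the trivial face $F=\P$ separately, which corresponds to the zero functional (or, if one prefers a nonzero functional, any $\varphi$ with $\P$ contained in a single level set — but the zero functional already suffices under the lemma's phrasing). No appeal to the Decomposition Theorem or to $\P$ being a polyhedron is needed; the lemma holds for arbitrary nonempty $\P\subset\R^n$, as stated.
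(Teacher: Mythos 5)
Your proof is correct and follows essentially the same approach as the paper's: in both directions one passes between the linear functional $\varphi(u) = -v\cdot u$ and the supporting hyperplane $H(v,w)$ via the definitions. You are a bit more careful than the paper in two small respects — you explicitly handle the degenerate case $F = \P$ (via $\varphi = 0$), and you note that since the anchor $w$ of the hyperplane need not lie in $\P$, one should evaluate at a point of $F$ to identify the maximum value — but these are minor amplifications of the same argument, not a different route.
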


\begin{pf}
If $F$ is a face of $\P$, $F = \P \cap H(v,w)$ for some supporting
hyperplane $H(v,w)$. Define $\varphi: \R^n \to \R$ by $\varphi(u) =
-v\cdot u$. It is easy to see that $\varphi$ is maximized in $\P$
precisely on $F$.

Similarly, if $\varphi \in (\R^n)^*$ is maximized in $\P$ on $F$, choose
$v$ such that $\varphi(u) = -v \cdot u$ for all $u \in \R^n$. Let $w \in
F$. Then, $F = \P \cap H(v,w)$.
\end{pf}

\begin{prop}\label{P2}
Suppose that $X \subset \F^n$. Then $\conv_\F(X) = \conv_\R(X) \cap
\F^n.$
\end{prop}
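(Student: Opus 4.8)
The plan is to prove both inclusions. One direction is immediate: if $x \in \conv_\F(X)$, then $x$ is an $\F_+$-convex combination $\sum_s r_s x_s$ with $r_s \in \F_+$, $\sum_s r_s = 1$, $x_s \in X$; since $\F \subset \R$, this same expression witnesses $x \in \conv_\R(X)$, and since $X \subset \F^n$ and the $r_s$ lie in $\F$, the point $x$ lies in $\F^n$. Hence $\conv_\F(X) \subset \conv_\R(X) \cap \F^n$.

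For the reverse inclusion, suppose $x \in \conv_\R(X) \cap \F^n$. By Carath\'eodory's theorem (or just by definition), write $x = \sum_{s=1}^k r_s x_s$ with $r_s \in \R_{>0}$, $\sum_s r_s = 1$, and the $x_s \in X$ affinely independent points of $\F^n$ (discarding redundant terms and keeping only points with positive coefficient, we may assume $x_0, \dots$ — more precisely the $x_s$ — are affinely independent; affine independence can be arranged since if they were affinely dependent we could reduce the number of terms). The key observation is then that the barycentric coordinates $(r_1, \dots, r_k)$ of $x$ with respect to an affinely independent family $x_1, \dots, x_k \in \F^n$ are the unique solution of a linear system with coefficients in $\F$: namely $\sum_s r_s x_s = x$ and $\sum_s r_s = 1$, an inhomogeneous linear system in the $r_s$ whose coefficient matrix and right-hand side have all entries in $\F$. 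Affine independence of the $x_s$ guarantees this system has a \emph{unique} solution, and unique solutions of $\F$-linear systems lie in $\F$ (Gaussian elimination stays within $\F$). Therefore each $r_s \in \F$, and since we already know $r_s > 0$, we get $r_s \in \F_{>0}$. This exhibits $x$ as an $\F$-convex combination of points of $X$, so $x \in \conv_\F(X)$.

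The one step that requires a little care — and is the only real obstacle — is the reduction to an affinely independent spanning family \emph{before} invoking uniqueness: one must check that, starting from any finite $\R_+$-representation of $x$, one can pass to one supported on affinely independent points without leaving $\R_+$. This is the standard Carath\'eodory argument: if $x_1, \dots, x_k$ are affinely dependent, there are reals $c_s$, not all zero, with $\sum_s c_s x_s = 0$ and $\sum_s c_s = 0$; replacing $r_s$ by $r_s - \theta c_s$ for the appropriate $\theta$ (chosen so all coefficients stay nonnegative and at least one becomes zero) reduces the number of terms while preserving that it is an $\R_+$-convex combination equal to $x$. Iterating, we reach an affinely independent support. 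Note this reduction happens entirely over $\R$; we do not need the $c_s$ to be in $\F$, since we only use the reduced representation as input to the uniqueness argument, which is where $\F$-rationality re-enters. Once that reduction is in hand, the rest is the linear-algebra fact that $\F$-linear systems with unique solutions have $\F$-rational solutions, and the proof concludes.
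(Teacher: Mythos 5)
Your proof is correct and follows essentially the same route as the paper: both reduce via Carath\'eodory to a representation supported on affinely independent points of $\F^n$, and both then conclude that the barycentric coefficients must lie in $\F$. The only cosmetic difference is in the final step — the paper makes the coefficients visible by applying an explicit $\F$-affine change of coordinates sending the simplex vertices to $0, e_1, \dots, e_r$, whereas you invoke directly the fact that a uniquely solvable $\F$-linear system has its solution in $\F$; these are the same observation.
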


\begin{pf}
The inclusion $\conv_\F(X) \subset \conv_\R(X)\cap \F^n$ is obvious. 

Suppose that $u \in \conv_\R(X) \cap \F^n$. Then, $u \in \conv_\R(U)$ for
some finite subset $U \subset X$. By Caratheodory's theorem, $u$ is in
some $r$-simplex $S \subset \conv_\R(U)$, such that the vertices of $S$
are a subset of $U$. 

Let $\{ s_0, s_1, \dots, s_r \}$ be the vertices of $S$. Then, $u =
\sum_{i=0}^r n_i s_i$ for some $n_i \in \R_+$ with $\sum_{i=0}^r n_i =
1$. 

Let $\psi$ be an $\F$-affine transformation of $\R^n$ such that
$\psi(s_0) = {\mathbf 0}$ and $\psi(s_i) = \mathbf{e}_i,\ 1 \leq i \leq
r$, where $\{ \mathbf{e}_i : 1 \leq i \leq n \}$ are the standard basis
vectors in $\R^n$. It is easy to see that
$$\psi(u) = \sum_{i=1}^r n_i\mathbf{e}_i.$$

\noindent Since $\psi$ is $\F$-affine, $\psi(u) \in \F_+^n$, and $n_i \in
\F_+$ for $i>0$. Furthermore, $n_0 = 1 - \sum_{i=1}^r n_i \in \F_+$, so
$u \in \conv_{\F}(X)$.
\end{pf}

\begin{cor}
Suppose $X \subset \F^n$, and $F$ is a face of $\conv_\R(X)$. Then $F
\cap \F^n$ is a face of $\conv_\F(X)$.
\end{cor}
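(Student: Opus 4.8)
The plan is to deduce this from Proposition \ref{P2} together with \lemref{L2}. Since $F$ is a face of $\conv_\R(X)$, by \lemref{L2} there is a linear functional $\varphi \in (\R^n)^*$ whose maximum over $\conv_\R(X)$ is attained exactly on $F$. The natural candidate for the face of $\conv_\F(X)$ is then the set of points in $\conv_\F(X)$ that maximize $\varphi$; the goal is to show that this set equals $F \cap \F^n$, and then invoke \lemref{L2} in reverse (applied with $\P = \conv_\F(X) \subset \F^n$, which we may view inside $\R^n$) to conclude it is a face of $\conv_\F(X)$.

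The key step is the set equality. First I would observe that $\conv_\F(X) \subset \conv_\R(X)$, so $\varphi$ is still bounded above on $\conv_\F(X)$, with supremum at most $m := \max_{\conv_\R(X)} \varphi$. To see the supremum is actually attained and equals $m$: pick any point $p \in F$; it lies in $\conv_\R(X) = \conv_\R(U)$ for a finite $U \subset X$, and by the argument in the proof of Proposition \ref{P2}, the vertices of any simplex in $\conv_\R(U)$ containing $p$ lie in $U \subset X \subset \F^n$ --- but this alone does not force $p \in \F^n$. Instead, the clean route is: $\varphi$ restricted to the polytope $\conv_\R(X)$ attains its max on the face $F$, which is itself a polytope whose vertices are among the (finitely many) extreme points of $\conv_\R(X)$, and those extreme points all lie in $X \subset \F^n$ (a polytope's vertices are a subset of any generating set). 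So $F = \conv_\R(V_F)$ for some finite $V_F \subset X \subset \F^n$, hence $\conv_\F(V_F) \subset \conv_\F(X)$ consists of points where $\varphi$ equals $m$, showing the max of $\varphi$ on $\conv_\F(X)$ is exactly $m$ and is attained. Then the maximizing set of $\varphi$ in $\conv_\F(X)$ is $\{u \in \conv_\F(X) : \varphi(u) = m\} = \conv_\F(X) \cap F = (\conv_\R(X) \cap \F^n) \cap F = F \cap \F^n$, where the middle equality uses Proposition \ref{P2} and the fact that on $\conv_\R(X)$ the locus $\varphi = m$ is precisely $F$.

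Finally, to conclude that $F \cap \F^n$ is a \emph{face} of $\conv_\F(X)$ rather than merely a maximizing set, apply \lemref{L2} with $\P = \conv_\F(X)$, regarding it as a subset of $\R^n$: a nonempty subset of $\P$ is a face iff it is the maximizer of some linear functional. One should note $F \cap \F^n \neq \emptyset$ since $V_F \subset F \cap \F^n$.

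The main obstacle I expect is the subtlety flagged above: a point of the face $F$ need not have coordinates in $\F$, so one cannot directly transfer a maximizer of $\varphi$ from $\conv_\R(X)$ to $\conv_\F(X)$ point by point. The resolution is to exploit that $F$ is generated by extreme points lying in $X \subset \F^n$, so that $F \cap \F^n \supset \conv_\F(V_F) \neq \emptyset$ and the maximum value $m$ is genuinely realized over $\F$. Everything else is bookkeeping with Proposition \ref{P2} and \lemref{L2}.
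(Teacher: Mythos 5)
The paper's own proof is a one-liner that bypasses linear functionals entirely: write $F = \conv_\R(X) \cap H(v,w)$ for a supporting hyperplane $H(v,w)$, then
\[ F \cap \F^n = \conv_\R(X) \cap H(v,w) \cap \F^n = \conv_\F(X) \cap H(v,w), \]
by \propref{P2}, which is a face of $\conv_\F(X)$ by definition. Your route through \lemref{L2} and maximizers of linear functionals is more elaborate but can be made to work.

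However, as written there is a genuine gap. The corollary carries no hypothesis that $\conv_\R(X)$ is a polytope; $X$ can be an arbitrary subset of $\F^n$. Your ``clean route'' for showing the maximum of $\varphi$ is attained over $\F$ invokes ``the polytope $\conv_\R(X)$,'' its ``finitely many extreme points,'' and the decomposition $F = \conv_\R(V_F)$ with $V_F$ finite --- none of which is available when $X$ is infinite and $\conv_\R(X)$ is not a polytope. The repair is easy and does not require polytopality: take any $p \in F$ and write $p = \sum_i a_i x_i$ with $x_i \in X$, $a_i > 0$, $\sum_i a_i = 1$. Since $\varphi(x_i) \leq \varphi(p) = m$ for each $i$ and $\varphi(p) = \sum_i a_i \varphi(x_i)$, every $x_i$ must satisfy $\varphi(x_i) = m$, i.e.\ $x_i \in F$. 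Hence $F \cap X \neq \emptyset$, so $F \cap \F^n \supset F \cap X \neq \emptyset$, and $m$ is attained on $\conv_\F(X)$. With this fix your identification $\{u \in \conv_\F(X) : \varphi(u) = m\} = F \cap \F^n$ is correct and \lemref{L2} applies; still, the paper's direct hyperplane computation is shorter and avoids the attainment question entirely.
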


\begin{pf}
Let $H(v,w)$ be a supporting hyperplane for $\conv_{\R}(X)$ such that $F
= \conv_{\R}(X) \cap H(v,w)$. Then,
$$F \cap \F^n = \conv_{\R}(X) \cap H(v,w)\cap\F^n = \conv_{\F}(X) \cap
H(v,w).$$
\end{pf}

\subsection{}

We now prove a general result relating weak $\F$-faces and polyhedra.

\begin{thm}\label{T1}
Suppose that $\conv_\R(X)$ is a polyhedron for $X \subset \F^n$. Then, $Y
\subset X$ is a weak $\F$-face if and only if $Y = F \cap X$ for some
face $F$ of $\conv_\R(X)$. Moreover, $\conv_\R(Y) = F$ in this case.
\end{thm}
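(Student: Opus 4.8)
The plan is to prove the two directions of the equivalence separately, and extract the statement $\conv_\R(Y) = F$ as a byproduct. Throughout, I will use \propref{P2} (so that $\conv_\F(X) = \conv_\R(X) \cap \F^n$) and \lemref{L2} (faces of $\conv_\R(X)$ are exactly maximizers of linear functionals), together with \lemref{L1} (relative-interior points can be written with strictly positive $\F$-coefficients using any prescribed point of $X$).

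First I would handle the easier direction: if $Y = F \cap X$ for a face $F$ of $\conv_\R(X)$, then $Y$ is a weak $\F$-face. Pick (via \lemref{L2}) a linear functional $\varphi$ maximized on $\conv_\R(X)$ precisely along $F$, with maximum value $c$. Let $U \subset X$ be such that $\conv_\F(Y) \cap \relint_\F(\conv_\F(U)) \ne \emptyset$, and take $u$ in that intersection. Since $u \in \conv_\F(Y) \subset \conv_\R(F) = F$, we have $\varphi(u) = c$. Now fix any $x_0 \in U$; by \lemref{L1} applied to $u \in \relint_\F(\conv_\F(U))$ we may write $u = r_0 x_0 + \sum_{j=1}^m r_j x_j$ with all $x_j \in U$, all $r_j \in \F_{>0}$, and $\sum r_j = 1$. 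Applying $\varphi$ gives $c = \varphi(u) = r_0\varphi(x_0) + \sum r_j \varphi(x_j)$, a strict convex combination of values $\varphi(x_j) \le c$; hence $\varphi(x_j) = c$ for all $j$, in particular $\varphi(x_0) = c$, so $x_0 \in F \cap X = Y$. As $x_0 \in U$ was arbitrary, $U \subset Y$, proving $Y$ is a weak $\F$-face.

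For the converse, suppose $Y \subset X$ is a weak $\F$-face. The key move is to consider the (real) face $F := \conv_\R(Y) \cap$ (supporting hyperplane), but more cleanly: let $F$ be the smallest face of the polyhedron $\conv_\R(X)$ containing $\conv_\R(Y)$ (equivalently, the face whose relative interior meets $\relint_\R(\conv_\R(Y))$ — this exists because $\conv_\R(X)$ is a polyhedron). Then pick $\varphi$ maximized on $\conv_\R(X)$ exactly along $F$, with value $c$. I claim $Y \subseteq F \cap X$ and conversely. The inclusion $Y \subseteq F$ is immediate. For $F \cap X \subseteq Y$: take any $x_0 \in F \cap X$. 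Choose a point $u \in \relint_\F(\conv_\F(Y))$ lying in $\relint_\R(F)$ — here one must check such an $\F$-point exists, which follows since $Y \subseteq \F^n$ and the $\F$-points are dense enough (the relative interior of $\conv_\F(Y)$ in its affine span is nonempty and consists of $\F$-points, and it lies inside $\relint_\R(F)$ by choice of $F$). Now let $U := Y \cup \{x_0\}$. Since $x_0 \in F$ and $u \in \relint_\R(F)$, the segment from $u$ in the direction away from $x_0$ stays in $F$; pushing this into the $\F$-world using $u \in \F^n$ and $x_0 \in \F^n$ gives $t \in \F \cap (0,1)$ and $z \in \conv_\F(U)$ with $u = t x_0 + (1-t)z$, which (checking the definition) shows $u \in \relint_\F(\conv_\F(U))$. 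As $u \in \conv_\F(Y)$, the weak-$\F$-face property forces $U \subseteq Y$, i.e. $x_0 \in Y$. Hence $Y = F \cap X$.

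Finally, $\conv_\R(Y) = F$: we have $\conv_\R(Y) \subseteq F$ by construction, and $F = \conv_\R(F \cap X) \subseteq \conv_\R(Y)$ since every face of the polyhedron $\conv_\R(X)$ is itself a polyhedron and hence (by \thmref{T25}, the Decomposition Theorem, with $X \subset \F^n$ so that $F \cap X$ contains all the vertices and enough of the extremal-ray generators of $F$) equals the convex hull of its own intersection with $X$. The main obstacle I anticipate is this last point together with the existence of the $\F$-relative-interior point $u$ inside $\relint_\R(F)$: one needs that for a rational/$\F$-rational polyhedron $\conv_\R(X)$ with $X \subset \F^n$, each face is $\F$-spanned by $F \cap X$ (vertices are $\F$-points, and along unbounded directions one can take $\F$-rational ray generators), so that $\conv_\F(Y)$ has nonempty $\F$-relative interior positioned correctly. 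This is where one leans hardest on \propref{P2} and the rationality built into the hypothesis $X \subset \F^n$; the convexity bookkeeping after that is routine.
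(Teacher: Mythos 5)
Your forward direction matches the paper's argument and is fine. The converse has a real gap, and the closing paragraph on $\conv_\R(Y) = F$ does not hold up.

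In the converse, you set $U = Y \cup \{x_0\}$, take $u \in \relint_\F(\conv_\F(Y)) \cap \relint_\R(F)$, and claim to produce $t \in \F \cap (0,1)$ and $z \in \conv_\F(U)$ with $u = tx_0 + (1-t)z$. But such a $z$ lies on the ray from $x_0$ through $u$ past $u$; that ray stays inside $F$ a little past $u$, but need not stay inside $\conv_\R(Y \cup \{x_0\})$ at all. If $x_0$ is not in the affine hull of $\conv_\R(Y)$, then $\conv_\R(Y)$ is precisely the face of the pyramid $\conv_\R(Y \cup \{x_0\})$ opposite the apex $x_0$, so $u$ lies on the \emph{boundary} of $\conv_\R(Y\cup\{x_0\})$, and there is no $z \in \conv_\R(U)$ (hence none in $\conv_\F(U)$) with $u$ strictly between $x_0$ and $z$. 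You have no a priori control that $x_0$ lies in that affine hull --- establishing this is essentially the statement you are trying to prove. Minimality of $F$ gives $\relint_\R(\conv_\R(Y)) \subset \relint_\R(F)$, but does not force $\conv_\R(Y)$ and $F$ to share an affine hull.

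The paper avoids this by taking $U = F\cap X$ all at once. The key extra ingredient making this work is $\conv_\R(F\cap X) = F$, which can be proved from the forward direction you already have: any $f \in F$ is a convex combination $\sum_i a_i x_i$ with $x_i \in X$, and since $F$ is a weak $\R$-face of $\conv_\R(X)$ (forward direction with $\F=\R$), \propref{P1} forces every $x_i$ with $a_i>0$ into $F$, hence into $F\cap X$. With $\conv_\R(F\cap X) = F$ in hand, $\relint_\R(F) = \relint_\R(\conv_\R(F\cap X))$, so the chosen interior point really does lie in $\relint_\F(\conv_\F(F\cap X))$, and the weak-face condition gives $F\cap X \subset Y$ in one stroke.

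This same argument is what proves $\conv_\R(Y) = F$. Your appeal to the Decomposition Theorem does not do it: the theorem tells you $F = \conv_\R(U') + \cone_\R(V')$ for \emph{some} finite $U',V'$, but gives you no way to take $U' \subset X$, and for unbounded faces there is no reason the extremal-ray directions are witnessed inside $F\cap X$. The \propref{P1} argument above is the one that closes this.
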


\noindent In particular, faces of polyhedra are weak $\R$-faces, using $\F
= \R$.

\begin{pf}
First, suppose that $Y = F \cap X$ for some face $F$ of $\conv_{\R}(X)$.
By \lemref{L2}, one can find a linear functional $\varphi \in (\R^n)^*$
such that $\varphi(u) \geq \varphi(v)$ for all $u \in F$ and $v \in
\conv_{\R}(X)$. Let $x_0 \in U \subset X$, and suppose $u \in
\conv_{\F}(Y) \cap \relint(\conv_{\F}(U))$.

We can write $u = \sum_{y\in Y} s_y y$ with $s_y \in \F_+$ and
$\sum_{y\in Y} s_y = 1$, and, thus, $\varphi(u) = \varphi(F)$. By
\lemref{L1}, $ u = \sum_{j=0}^m r_jx_j$ for some $r_j \in \F_{>0}$ and
$x_j \in U$. Applying $\varphi$, we have
$$\varphi(F) = \varphi(u) = \sum_{j=0}^m r_j \varphi(x_j) \leq
\sum_{j=0}^m r_j \varphi(F) = \varphi(F).$$

\noindent Since $r_0$ is positive, $\varphi(x_0) = \varphi(F)$, so $x_0
\in F \cap X = Y$. Since $x_0 \in U$ was arbitrary, $U \subset Y$, so $Y
= F \cap X$ is a weak $\F$-face of $X$.

Now, let $Y$ be a weak $\F$-face of $X$, and let $F$ be the smallest face
of $\conv_{\R}(X)$ such that $Y \subset F$. If $\# F \cap X =1$, then $Y
= F\cap X$ and we are done.  Suppose that $\# F \cap X >1$. Since $F$ is
minimal and $\conv_{\R}(X)$ is a polyhedron, the interior of $F$ must
contain an element $y \in \conv_{\F}(Y)$. Let $x \in F \cap X$. If $x =
y$, then it is clear that $x \in Y$. 

Suppose that $x \neq y$. Then, by \lemref{L1}, $y = r_0x + \sum_{i=1}^m
r_ix_i$ for some $r_i \in \F_{>0}$ and $x_i \in F \cap X$. In particular,
$y \in \conv_{\F}(Y) \cap \relint(\conv_\F(F \cap X))$. Since $Y$ is a
weak $\F$-face of $X$, this gives that $F \cap X \subset Y$.

Finally, given $Y = F \cap X$ for some face $F$ of $\conv_\R(X)$, clearly
we have $\conv_\R(Y) \subset F$. Conversely, given $f \in F \subset
\conv_\R(X)$, $1 \cdot f = \sum_i a_i x_i$ for some $x_i \in X$, with
$0 \leq a_i$ adding up to 1. Now use \propref{P1} with $\F = \R$: since
$F$ is a weak $\R$-face of the polyhedron (by the remark following
the statement of this result), hence each $x_i \in F$. But then $x_i \in
F \cap X = Y$, so $f \in \conv_\R(Y)$ as desired.
\end{pf}

\subsection{}

We now study positive weak $\F$-faces. We start with an equivalent
characterization.

\begin{lem}\label{L4}
For all subsets $X \subset \R^n$ and subfields $\F\subset\R$, the
positive weak $\F$-faces of $X$ are the weak $\F$-faces $Y \subset X$
such that $Y$ is a weak $\F$-face of $X\cup\{0\}$ and $0 \notin
\conv_{\F}(Y)$.
\end{lem}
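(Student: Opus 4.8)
The plan is to prove both inclusions of the claimed equality of sets of subsets. Let me write $Y \subset X$ throughout.

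\medskip

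\textbf{Plan of proof.}
First I would show that a positive weak $\F$-face $Y$ has the three stated properties. That $Y$ is a weak $\F$-face of $X$ is part of the definition. To see that $Y$ is a weak $\F$-face of $X \cup \{0\}$, I take an arbitrary $U \subset X \cup \{0\}$ with $\conv_\F(Y) \cap \relint_\F(\conv_\F(U)) \neq \emptyset$ and must show $U \subset Y$; write $U = U' $ or $U = U' \cup \{0\}$ with $U' \subset X$. If $0 \notin U$ this is just the weak $\F$-face property of $Y$ in $X$. If $0 \in U$, then $\conv_\F(U) = \conv_\F(U' \cup \{0\})$, so the positive weak $\F$-face condition forces $\conv_\F(Y) \cap \relint_\F(\conv_\F(U' \cup \{0\})) = \emptyset$, contradicting the assumption; hence this case cannot occur, and in particular we never need worry about whether $0 \in Y$. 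Finally, $0 \notin \conv_\F(Y)$: if $0 \in \conv_\F(Y)$, apply the positive condition with $U = \emptyset$ (or note $U \cup \{0\} = \{0\}$ and $\relint_\F(\conv_\F(\{0\})) = \{0\}$), giving $\conv_\F(Y) \cap \{0\} = \emptyset$, a contradiction. The one subtlety here is checking $\relint_\F(\conv_\F(\{0\})) = \{0\}$ directly from the definition, which is immediate since for $Y = \conv_\F(\{0\}) = \{0\}$ the defining condition with $y = 0$ is satisfied by $z = 0$, $t = 1/2$.

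\medskip

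For the converse, suppose $Y$ is a weak $\F$-face of $X$, $Y$ is a weak $\F$-face of $X \cup \{0\}$, and $0 \notin \conv_\F(Y)$; I must show $Y$ is a positive weak $\F$-face of $X$, i.e.\ for every $U \subset X$ we have $\conv_\F(Y) \cap \relint_\F(\conv_\F(U \cup \{0\})) = \emptyset$. Suppose not: pick $u$ in this intersection. Then $u \in \conv_\F(Y)$, and $u \in \relint_\F(\conv_\F(U \cup \{0\}))$. Since $Y$ is a weak $\F$-face of $X \cup \{0\}$ and $u \in \conv_\F(Y) \cap \relint_\F(\conv_\F(U \cup \{0\}))$, we conclude $U \cup \{0\} \subset Y$; in particular $0 \in Y$, so $0 \in \conv_\F(Y)$, contradicting the hypothesis. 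Hence the intersection is empty, and $Y$ is a positive weak $\F$-face of $X$.

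\medskip

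\textbf{Expected main obstacle.}
The argument is essentially a bookkeeping exercise matching the two definitions, so there is no deep obstacle; the one place requiring care is the handling of the empty set and the singleton $\{0\}$ in the definitions of $\conv_\F$ and $\relint_\F$ — specifically confirming that $\relint_\F(\conv_\F(\{0\})) = \{0\}$ and that ``$0 \notin \conv_\F(Y)$'' together with $Y$ being a weak $\F$-face of $X \cup \{0\}$ really does rule out $0 \in Y$. Once one is careful that the weak $\F$-face condition for $X \cup \{0\}$ is being applied to subsets $U$ of $X \cup \{0\}$ that may contain $0$, both directions drop out cleanly.
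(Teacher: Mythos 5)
Your proof is correct and follows essentially the same route as the paper: both directions are the same bookkeeping on the two definitions, with the converse direction matching the paper's argument verbatim. The one small variation is in establishing $0\notin\conv_\F(Y)$ — you take $U=\emptyset$ and observe $\relint_\F(\conv_\F(\{0\}))=\{0\}$, whereas the paper takes $U=Y$ and implicitly uses that $\relint_\F(\conv_\F(Y))\neq\emptyset$; your choice sidesteps that nonemptiness check and is if anything a bit cleaner.
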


\begin{proof}
First, suppose that $Y$ is a positive weak $\F$-face of $X$. It follows
easily from the definition that $Y$ is a weak $\F$-face of $X\cup\{0\}$.
Suppose that $0 \in \conv_{\F}(Y)$, and let $U = Y$. Then, it is clear
that $\conv_{\F}(Y) \cap \relint(\conv_{\F}(U\cup\{0\})) =
\relint(\conv_{\F}(Y)) \neq \emptyset$, which contradicts $Y$ being a
positive weak $\F$-face of $X$.

Now, suppose $Y$ is a weak $\F$-face for both $X$ and $X\cup\{0\}$ such
that $0 \notin \conv_{\F}(Y)$. Let $U \subset X$. If $\conv_{\F}(Y) \cap
\relint(\conv_{\F}(U \cup \{ 0 \})) \neq \emptyset$, then $U \cup\{0\}
\subset Y$ since $Y$ is a weak $\F$-face of $X \cup\{0\}$. However, this
is impossible since $0 \not\in \conv_{\F}(Y)$. Thus, $Y$ is a positive
weak $\F$-face of $X$.
\end{proof}

To connect these results to the results in \cite{CKR}, we prove the
following proposition.

\begin{prop}\label{P1}
Let $X \subset \R^n$ and $\F$ a subfield of $\R$.
\begin{enumerit}
\item A subset $Y$ is a weak $\F$-face of $X$ if and only if
$$\sum_{y \in Y} m_y y = \sum_{x \in X} r_x x,\ m_y, r_x \in \F_+\
\forall y \in Y, x \in X \mbox{ and } \sum_{y \in Y} m_y = \sum_{x \in X}
r_x \implies x \in Y \mbox{ if } r_x \neq 0.$$

\item A subset $Y$ is a positive weak $\F$-face of $X$ if and only if (i)
holds and
$$\sum_{y \in Y} m_y y = \sum_{x \in X} r_x x \implies \sum_{y \in Y} m_y
\leq \sum_{x \in X} r_x.$$
\end{enumerit}
\end{prop}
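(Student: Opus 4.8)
\textbf{Proof plan for Proposition~\ref{P1}.}
The plan is to reduce each direction to the definitions of weak $\F$-face and positive weak $\F$-face, using Lemma~\ref{L1} and Lemma~\ref{L4} as the main tools. For part (i), suppose first that $Y$ is a weak $\F$-face. Given an equation $\sum_{y\in Y} m_y y = \sum_{x\in X} r_x x$ with matching coefficient sums $c := \sum_y m_y = \sum_x r_x$, I would first dispose of the degenerate case $c=0$ (then all $m_y, r_x$ vanish and the implication is vacuous). Otherwise, rescaling by $1/c$ produces a point $u := \sum_y (m_y/c)\,y = \sum_x (r_x/c)\,x$ lying in $\conv_\F(Y)$ and, letting $U = \{x\in X : r_x\neq 0\}$, in $\relint_\F(\conv_\F(U))$ — here the point is that $u$ is a strictly positive $\F$-combination of exactly the elements of $U$, which makes it relatively interior by the definition of $\relint_\F$. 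The weak $\F$-face property then forces $U\subset Y$, i.e.\ $r_x\neq 0 \implies x\in Y$, as desired. For the converse, assume the displayed implication holds, and let $u\in\conv_\F(Y)\cap\relint_\F(\conv_\F(U))$ for some $U\subset X$. Writing $u = \sum_{y\in Y} s_y y$ with $s_y\in\F_+$, $\sum s_y = 1$, and separately applying Lemma~\ref{L1} to express $u$ as a strictly positive $\F$-combination $\sum_{j=0}^m r_j x_j$ over the elements $x_0,\dots,x_m$ of $U$ (so every element of $U$ actually appears, since $u$ is relatively interior — this is the subtle point, requiring that relative-interior points of $\conv_\F(U)$ use every extreme generator, which one extracts by applying Lemma~\ref{L1} with each element of $U$ playing the role of $x_0$), one gets two expressions with equal coefficient sums $1$. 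The hypothesis then yields $x_j\in Y$ for all $j$, hence $U\subset Y$.

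For part (ii), I would invoke Lemma~\ref{L4}: $Y$ is a positive weak $\F$-face of $X$ iff $Y$ is a weak $\F$-face of both $X$ and $X\cup\{0\}$ and $0\notin\conv_\F(Y)$. Applying part (i) to $X\cup\{0\}$ (whose underlying index set adds the single element $0$), the weak $\F$-face condition there becomes: whenever $\sum_{y\in Y} m_y y = \sum_{x\in X} r_x x + r_0\cdot 0$ with $\sum_y m_y = \sum_x r_x + r_0$, one has $r_x\neq 0\implies x\in Y$ and $r_0 = 0$ (since $0\notin Y$ as $Y\subset X$ and we are in the case $\wt$-like setting where $0$ is the adjoined element not already in $X$; if $0\in X$ one argues slightly differently but the conclusion $r_0=0$ forced by $0\notin Y$ persists). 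Since $0$ contributes nothing to the left side regardless, the condition ``$r_0 = 0$ is forced'' is equivalent to: there is no way to write $\sum_{y\in Y} m_y y = \sum_{x\in X} r_x x$ with $\sum_y m_y = \sum_x r_x + r_0$ for some $r_0 > 0$, i.e.\ $\sum_{y\in Y} m_y y = \sum_{x\in X} r_x x \implies \sum_y m_y \leq \sum_x r_x$. Combined with $0\notin\conv_\F(Y)$ (which is subsumed: taking all $r_x = 0$ gives $\sum_y m_y \leq 0$, forcing $\sum m_y = 0$, so $0$ is not a genuine convex combination) and part (i) for $X$ itself, this is exactly the stated characterization.

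\textbf{Main obstacle.} The delicate step is the converse of (i): showing that a point $u$ in the $\F$-relative interior of $\conv_\F(U)$ admits a representation as a strictly positive $\F$-combination involving \emph{every} element of $U$. Lemma~\ref{L1} only guarantees that a prescribed single element $x_0$ can be given positive weight; to cover all of $U$ I would apply it once for each $x_0\in U$, obtaining for each a positive-weight representation, and then average these $|U|$ representations (a convex combination with equal rational weights $1/|U|$, which stays in $\F$), producing one representation in which all coefficients are strictly positive. This averaging trick is routine but is the crux that makes the definition of $\relint_\F$ interact correctly with the ``$r_x\neq 0\implies x\in Y$'' formulation; everything else is bookkeeping with coefficient sums and the translation through Lemma~\ref{L4}.
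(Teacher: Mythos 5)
Your plan for the ``only if'' direction of part (i) (weak $\F$-face $\Rightarrow$ the displayed implication) follows the paper's argument: normalize to get a point $u'$ that lies simultaneously in $\conv_\F(Y)$ and in $\relint_\F(\conv_\F(U))$ with $U = \{x : r_x \neq 0\}$, then invoke the weak-face property. You state that strictly positive coefficients ``make it relatively interior by the definition of $\relint_\F$''; this does need a short verification (the definition quantifies over all $y \in \conv_\F(U)$, not just $y \in U$), which the paper supplies by a reduction to the generators of $U$ plus a small-$t$ argument, but your sketch is pointing in the right place.

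The trouble is in the other direction, which you have misdiagnosed as the subtle one. To show the displayed implication forces $Y$ to be a weak $\F$-face, you do \emph{not} need a single representation of $u$ in which every element of $U$ carries positive weight. The paper's argument handles one element at a time: fix $x_0 \in U$, apply Lemma~\ref{L1} to write $u = r_0 x_0 + \sum_{j\geq 1} r_j x_j$ with $r_0 \in \F_{>0}$ and all $x_j \in U$, and set this equal to $u = \sum_{y} m_y y$; both coefficient sums are $1$, so the hypothesis immediately gives $x_0 \in Y$. Since $x_0 \in U$ was arbitrary, $U \subset Y$. Your averaging trick is therefore superfluous, and more seriously it breaks when $U$ is infinite: one cannot average over $|U|$ representations, nor can a single finite-support convex combination involve every element of an infinite $U$. (The definition of weak $\F$-face quantifies over \emph{all} $U \subset X$, and nothing forces $U$ to be finite, even though each individual convex combination is.) The ``main obstacle'' you flag is thus self-inflicted; the clean proof simply re-applies Lemma~\ref{L1} with a fresh $x_0$ each time and never combines the resulting representations.

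For part (ii), your route through Lemma~\ref{L4} (reduce to weak $\F$-face of $X\cup\{0\}$ together with $0 \notin \conv_\F(Y)$, then translate via part (i) applied to $X\cup\{0\}$) is a genuinely different path from the paper, which instead verifies the coefficient-sum inequality directly from the definition of positive weak $\F$-face by adjoining $0$ as a summand and normalizing, exactly as in the $(\Rightarrow)$ half of part (i). Your translation is correct, including the observation that the inequality already encodes $0\notin\conv_\F(Y)$, but it is longer; the paper's direct computation is a two-step bookkeeping argument. Either works, but the direct route avoids the case split on whether $0 \in X$ that you have to gesture at.
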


\noindent \begin{pf} 
\begin{enumerit}
\item ($\Leftarrow$) Suppose $U \subset X$ and $u \in \conv_\F(Y)\cap
\relint(\conv_{\F}(U))$. Let $x_0 \in U$. By \lemref{L1} and the
definition of $\conv_\F(Y)$, we can write $u = \sum_{y \in Y} m_y y =
r_0x_0 + \sum_{j = 1}^m r_jx_j$ for some $x_j\in U$, where $m_y \in
\F\cap[0,1]$ for all $y \in Y$, $r_j \in \F\cap(0,1)$ for $j =
0,\dots,m$, and $\sum_{y\in Y} m_y = \sum_{j=0}^m r_j = 1$. Then, $r_0
\neq 0$, so $x_0 \in Y$. Since $x_0$ was arbitrary, $U \subset Y$.

\noindent ($\Rightarrow$)  Suppose that $u = \sum_{y\in Y} m_y y =
\sum_{x\in X} r_x x$ and $\sum_{y\in Y} m_y  = \sum_{x\in X} r_x>0$ with
$m_y, r_x \in \F_+$ for all $y \in Y$ and $x \in X$. 

Let $U = \{x \in X \ | \ r_x \neq 0\}$, and consider $u' =
\dfrac{1}{\sum_{x\in X} r_x} u \in \conv_{\F}(U)$. It is clear that $u'
\in \conv_{\F}(Y)$, so it suffices to show that $u' \in
\relint(\conv_{\F}(U))$. Furthermore, since each $x \in \conv_{\F}(U)$ is
a convex sum of a finite number of elements in $U$, it suffices to check
that for every $x_0 \in U$, there exists $y_0 \in \conv_{\F}(U)$ and $r_0
\in \F\cap(0,1)$ such that $u' = r_0x_0 + (1 - r_0)y_0$. By construction,
we have $u' = \sum_{x \in U} r_x' x$ with $r_x' \in \F\cap(0,1)$ and
$\sum_{x \in U} r_x' = 1$. Letting $r_0 = r_{x_0}'$, we have
$$u' = r_0 x_0 + \sum_{x \neq x_0} r_x' x = r_0 x_0 + (1 - r_0) \sum_{x
\neq x_0} \frac{r_x'}{1 - r_0} x.$$

\noindent It is easy to check that $y = \sum_{x\neq x_0}
\frac{r_x'}{1-r_0} x \in \conv_{\F}(U)$. 
In particular, $\conv_{\F}(Y) \cap \relint(\conv_{\F}(U)) \neq
\emptyset$, so $U = \{x \in X \ | \ r_x \neq 0\} \subset Y$. 

\item ($\Leftarrow$) Suppose that $\conv_{\F}(Y)\cap \relint
(\conv_{\F}(U \cup \{ 0 \})) \neq \emptyset$ for some $U \subset X$. Let
$u \in \conv_{\F}(Y) \cap \relint(\conv_{\F}(U \cup \{ 0 \}))$. By
\lemref{L1}, there exist $r_0, r_1, \dots, r_n \in \F \cap(0,1)$ and
$x_1, \dots, x_n \in U$ such that $\sum_{j=0}^n r_j = 1$, and
$$u = r_0\cdot 0 + \sum_{j=1}^n r_j x_j = \sum_{j=1}^n r_j x_j.$$

\noindent Similarly, there exist $m_y \in \F\cap[0,1]$ such that $\sum_{y
\in Y} m_y =1$, and $u = \sum_{y \in Y} m_y y$. However, this gives $u =
\sum_{y \in Y} m_y y = \sum_{j=1}^n r_j x_j$ with $\sum_{j=1}^n r_j = 1 -
r_0 < 1 = \sum_{y \in Y} m_y$, which is impossible. Thus, $\conv_{\F}(Y)
\cap \relint(\conv_{\F}(U \cup \{ 0 \})) = \emptyset$ for all $U \subset
X$.

\noindent ($\Rightarrow$) Let $u = \sum_{y \in Y} m_y y = \sum_{x\in X}
r_x x$ with $m_y, r_x \in \F\cap[0,\infty)$. Let $U = \{x \in X \ | \ r_x
\neq 0\}\neq \emptyset$, and suppose that $\sum_{x\in X} r_x < \sum_{y\in
Y}m_y$. Define $r_0 = \sum_{y \in Y} m_y - \sum_{x\in X}r_x >0$. Then, $u
= r_0\cdot 0 + \sum_{x\in U} r_x x$. 

Let $u' = \dfrac{1}{\sum_{y \in Y} m_y} u$. Since $r_0 + \sum_{x\in U}
r_x = \sum_{y\in Y} m_y$, $u' \in \conv_{\F}(U\cup\{0\})$. In fact, by an
argument similar to that in part $(i)$, $u' \in
\relint(\conv_{\F}(U \cup \{0\}))$. However, this gives $u' \in
\conv_{\F}(Y)\cap \relint(\conv_{\F}(U \cup\{0\}))$, which contradicts
$Y$ being a positive weak $\F$-face of X. Therefore, $\sum_{y\in Y}m_y
\leq \sum_{x\in X} r_x$.
\end{enumerit}
\end{pf}

Finally, this equivalent formulation of the positive weak $\F$-faces
allows us to explain the terminology.

\begin{thm}\label{T44}
Suppose $\conv_\R(X \cup \{ 0 \})$ is a polyhedron for $X \subset \F^n$.
Then $Y \subset X$ is a positive weak $\F$-face of $X$ if and only if $Y$
maximizes in $X$ some linear functional $\varphi\in(\R^n)^*$ which has a
positive maximum on $X$.

In particular, if $0$ is in the interior of $\conv_\R(X)$, then a subset
$Y$ is a positive weak $\F$-face of $X$ if and only if $Y \neq X$ and $Y$
is a weak $\F$-face of $X$.
\end{thm}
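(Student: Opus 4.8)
The plan is to combine \thmref{T1} and \lemref{L4} with the geometry of polyhedra. First I would prove the main equivalence. Suppose $\conv_\R(X\cup\{0\})$ is a polyhedron. For the forward direction, if $Y$ is a positive weak $\F$-face of $X$, then by \lemref{L4} it is a weak $\F$-face of $X\cup\{0\}$ with $0\notin\conv_\F(Y)$. Applying \thmref{T1} to the set $X\cup\{0\}\subset\F^n$, we get $Y = F\cap(X\cup\{0\})$ and $\conv_\R(Y)=F$ for some face $F$ of $\conv_\R(X\cup\{0\})$; moreover $0\notin F$ since otherwise $0\in\conv_\R(Y)\cap\F^n=\conv_\F(Y)$ by \propref{P2}, contradicting $0\notin\conv_\F(Y)$. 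In particular $F$ is a proper face, so by \lemref{L2} there is a linear functional $\varphi$ maximized on $\conv_\R(X\cup\{0\})$ exactly along $F$; since $0\notin F$ and $F\neq\emptyset$, the maximum value $\varphi(F)$ is strictly greater than $\varphi(0)=0$, i.e.\ it is positive. As $0\notin F$ we have $F\cap(X\cup\{0\})=F\cap X$, so $Y=F\cap X$ is the maximizer of $\varphi$ in $X$ with positive maximum.

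For the converse, suppose $Y$ maximizes $\varphi\in(\R^n)^*$ in $X$ with positive maximum value $c>0$. Since $\varphi(0)=0<c$, the functional $\varphi$ attains its maximum over $X\cup\{0\}$ on the set $Y$ as well, and this is a proper face $F$ of $\conv_\R(X\cup\{0\})$ with $0\notin F$; by \thmref{T1}, $Y=F\cap(X\cup\{0\})=F\cap X$ is a weak $\F$-face of $X\cup\{0\}$, and also a weak $\F$-face of $X$ (directly from the definition, since any $U\subset X$ is also a subset of $X\cup\{0\}$). Finally $0\notin\conv_\F(Y)=\conv_\R(Y)\cap\F^n\subset F$ since $0\notin F$. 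By \lemref{L4}, $Y$ is a positive weak $\F$-face of $X$.

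For the ``in particular'' statement, assume $0$ lies in the interior of $\conv_\R(X)$. If $Y$ is a positive weak $\F$-face of $X$, it is in particular a weak $\F$-face, and $Y\neq X$ because $0\in\conv_\F(Y)$ would follow from $Y=X$ (as $0$ is even in the interior of $\conv_\R(X)$, whence $0\in\conv_\R(X)\cap\F^n=\conv_\F(X)$ by \propref{P2}), contradicting \lemref{L4}. Conversely, suppose $Y\neq X$ is a weak $\F$-face of $X$. Then $\conv_\R(X\cup\{0\})=\conv_\R(X)$ is a polyhedron (indeed a polytope, as $\conv_\R(X)=F\cap X$-type arguments and \thmref{T1} show $X$ is finite up to taking hulls — more simply, $0$ interior forces $\conv_\R(X)$ full-dimensional but we only need it is a polyhedron, which we are assuming implicitly via the theorem's hypothesis applied to this case). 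By \thmref{T1}, $Y=F\cap X$ for a face $F$ with $\conv_\R(Y)=F$, and $F$ is proper since $Y\neq X$ forces $F\neq\conv_\R(X)$ (if $F$ were everything then $F\cap X=X\neq Y$). As $0$ is in the \emph{interior} of $\conv_\R(X)$, it lies on no proper face, so $0\notin F$, hence $0\notin\conv_\F(Y)$, and the first part (converse direction, taking $\varphi$ supporting $F$ with $\varphi(F)>\varphi(0)=0$) shows $Y$ is a positive weak $\F$-face.

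The main obstacle I anticipate is bookkeeping around the distinction between $X$ and $X\cup\{0\}$: one must be careful that a weak $\F$-face of $X\cup\{0\}$ not containing $0$ restricts correctly to a weak $\F$-face of $X$ and vice versa, and that ``face of $\conv_\R(X\cup\{0\})$ avoiding $0$'' is equivalent to ``proper face on which a functional with positive max is attained.'' Each of these is routine given \thmref{T1}, \lemref{L2}, \lemref{L4}, and \propref{P2}, but the equivalences must be threaded together in the right order.
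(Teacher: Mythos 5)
Your proposal is correct, and the forward direction and the ``in particular'' part essentially coincide with the paper's own proof: both identify the face $F$ of $\conv_\R(X\cup\{0\})$ via \thmref{T1}, rule out $0\in F$ using \lemref{L4}, and read off $\varphi(Y)>\varphi(0)=0$ from \lemref{L2}.

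Where you differ is in the converse of the main equivalence. You argue structurally: since $\varphi(0)=0<c$, the maximizer of $\varphi$ on $X\cup\{0\}$ is still $Y$, so a second application of \thmref{T1} (again to $X\cup\{0\}$) produces a face $F$ with $\conv_\R(Y)=F$ and $0\notin F$, hence $0\notin\conv_\F(Y)$ by \propref{P2}, and \lemref{L4} closes the loop. The paper instead reaches for \propref{P1}: applying $\varphi$ to an identity $\sum_{y}m_yy=\sum_{x}r_xx$ yields $\varphi(Y)\sum m_y\le\varphi(Y)\sum r_x$, and dividing by $\varphi(Y)>0$ gives the defining inequality for a positive weak $\F$-face. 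Both are valid. One small advantage of your route is that every invocation of \thmref{T1} is on $X\cup\{0\}$, which is exactly what the hypothesis makes a polyhedron; the paper's parenthetical ``$Y$ is a weak $\F$-face by \thmref{T1} and \lemref{L2}'' implicitly treats $\conv_\R(X)$ itself as a polyhedron, which does not literally follow from the hypothesis (consider $X=\{1/k:k\ge 1\}\subset\R$), though the needed fact is easily recovered from the same $\varphi$-computation. So your version is sound and marginally cleaner on that point; the paper's is a touch shorter once one accepts the sum characterization in \propref{P1}.
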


\begin{pf}
If $Y$ is a positive weak $\F$-face of $X$, then $Y$ is a positive weak
$\F$-face of $X \cup \{ 0 \}$, and hence also a weak $\F$-face of $X \cup
\{ 0 \}$ (both statements follow from the definitions), which does not
contain $0$ by \lemref{L4}. Hence by \thmref{T1}, $Y = F \cap (X \cup \{ 0 \})
= F \cap X$, for some face $F$ of $\conv_\R(X \cup \{ 0 \})$. Suppose $F$
maximizes the linear functional $\varphi$ in the polyhedron. Now if $0 \in F$,
then $0 \in F \cap (X \cup \{ 0 \}) = Y$, which contradicts \lemref{L4}. Thus,
$Y = F \cap X$ maximizes $\varphi$ in $X \cup \{ 0 \}$, and $0 \notin F$.
Hence $\varphi(Y) > \varphi(0) = 0$.

Conversely, choose $\varphi \in (\R^n)^*$ which is maximized in $X$
precisely on $Y$ and $\varphi(Y) >0$. (In particular, $Y$ is a weak
$\F$-face by \thmref{T1} and \lemref{L2}.) Suppose that $\sum_{y \in Y}
m_y y = \sum_{x \in X}r_x x$. Applying $\varphi$, we have
$$\varphi(Y) \sum_{y \in Y} m_y = \sum_{y \in Y} m_y \varphi(y) = \sum_{x
\in X} r_x \varphi(x) \leq \varphi(Y) \sum_{x \in X} r_x.$$

\noindent Since $\varphi(Y) >0$, this gives $\sum_{y \in Y} m_y \leq
\sum_{x\in X}r_x$, and $Y$ is a positive weak $\F$-face of $X$ by
\propref{P1}.

Finally, suppose that $0 \in \relint_\R(\conv_\R(X))$. The result is
clear if $X = \{ 0 \}$, so now suppose otherwise. Since $0$ is an
interior point, $0 \in \conv_\R(X \setminus \{ 0 \}) \cap \F^n$, so by
\propref{P2}, $0 \in \conv_\F(X \setminus \{ 0 \}) \subset
\conv_\F(X)$. Then, $X$ is not a positive weak $\F$-face of itself,
by \lemref{L4}. Since every positive weak $\F$-face is a weak
$\F$-face, it now suffices to prove that every proper weak
$\F$-face of $X$ is a positive weak $\F$-face.

Let $Y \subsetneq X$ be a (proper) weak $\F$-face of $X$. By Theorem
\ref{T1}, $Y = F \cap X$, for some proper face $F$ of $\conv_\R(X) =
\conv_\R(X \cup \{ 0 \})$. Since $0$ is an interior point, $0 \notin F$, so
$0 \notin Y = F \cap (X \cup \{ 0 \})$. By \lemref{L2}, $Y \subset X$
maximizes some linear functional $\varphi$ on $X \cup \{ 0 \}$, and $0
\notin Y$. Hence $\varphi(Y) > \varphi(0) = 0$, and we are done by the
first part of this result.
\end{pf}

\section{Application to representation theory}\label{S5}

We can now show one of our main results, using the above theory.

\begin{proof}[Proof of \thmref{T2}]
Suppose $\wt V \neq \{ 0 \}$. By the Decomposition Theorem \ref{T25}, the
sets $\conv_\R(\wt V)$ and $\conv_\R(\{ 0 \} \cup \wt V)$ are polytopes.
The result follows from \thmref{T1} and \thmref{T44}, once we show that
the origin is in the $\F$-relative interior of $\conv_\F(\wt V)$, for all $\F$.

First note that the vector $\rho_V := \sum_{\mu \in \wt V} \mu$ is
$W$-invariant, since $\wt V$ is stable under $W$. Then $s_i(\rho_V) =
\rho_V$, so $(\rho_V, \alpha_i) = 0\ \forall i$. Thus, $\rho_V = 0$. Now
given $y = \sum_{\mu \in \wt V} r_\mu \mu \in \conv_\F(\wt V)$, define
\[ z = \frac{1}{|\wt V| - 1} \sum_{\mu \in \wt V} (1-r_\mu) \mu \in
\conv_\F(\wt V). \]

\noindent Then $\rho_V = 0 = ty + (1-t)z$, where $t = \frac{1}{|\wt V|}
\in \F \cap (0,1)$. Hence $0 = \rho_V \in \relint(\conv_\F(\wt V))$.
\end{proof}

\subsection{}

We now prove the following result, before using it to show \thmref{T32}.
We introduce the following notation: given $\lambda \in \liehr^*$, define
$I_\lambda$ to be the union of those graph components of the Dynkin
diagram of $\lie g$, which are not disjoint from $\supp(\lambda)$.

\begin{prop}\label{P51}
Fix $0 \neq \lambda \in P^+$ and $J \subset I$.
Then $\wtvla{J} \subsetneq \wtvla{}$ if and only if $I_\lambda \nsubseteq
J$, if and only if $\max_{\wtvla{}} \rhovla{J} > 0$.
(Hereafter, we abuse notation, whereby $\mu \in \liehr^*$ denotes the
functional $(\mu,-)$.)
\end{prop}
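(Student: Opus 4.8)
The plan is to establish the chain of equivalences
$\wtvla{J} \subsetneq \wtvla{} \iff I_\lambda \nsubseteq J \iff \max_{\wtvla{}}(\rhovla{J},-) > 0$
by a cycle of implications, exploiting the fact that $V(\lambda)$ is a finite-dimensional module whose weight set is $W$-invariant, together with the description of $\rhovla{J}$ as (a rational multiple of) the center of mass of the face $\conv_\R(\wtvla{J})$. The key structural input is that $\wtvla{J} = \wt V(\lambda) \cap (\lambda - Q_J^+)$ is precisely the set of weights of the $\lie g_J$-submodule $\bu(\lie g_J) m_\lambda$, just as in the proof of the corollary in \secref{S27}, and this submodule factors as a tensor product over the connected components of $J$ in the Dynkin diagram. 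Since $\lambda$ is dominant, along any component of $J$ not meeting $\supp(\lambda)$ the module $\bu(\lie g_J)m_\lambda$ is trivial, so $\wtvla{J}$ only "sees" the components of $I_\lambda$ that lie inside $J$.

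First I would prove $I_\lambda \subseteq J \implies \wtvla{J} = \wtvla{}$. If $I_\lambda \subseteq J$, then $\lambda$ is a regular-enough dominant weight for $\lie g_J$ in the sense that $\supp(\lambda) \subseteq J$ and, more importantly, every simple root $\alpha_i$ with $i \in I_\lambda$ lies in $Q_J^+$. The point is that $\wt V(\lambda) \subseteq \lambda - Q^+$, and one checks that in fact $\wt V(\lambda) \subseteq \lambda - Q_{I_\lambda}^+$: any weight $\mu = \lambda - \sum_i n_i\alpha_i$ of $V(\lambda)$ must have $n_i = 0$ whenever $\alpha_i$ belongs to a Dynkin component disjoint from $\supp(\lambda)$, since such a component acts trivially on the cyclic generator and hence on all of $V(\lambda)$ (the $\alpha_i$-string through any weight stays at that weight). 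Therefore $\wt V(\lambda) \subseteq \lambda - Q_{I_\lambda}^+ \subseteq \lambda - Q_J^+$, giving $\wtvla{J} = \wtvla{}$.

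Next I would prove the contrapositive direction $I_\lambda \nsubseteq J \implies \wtvla{J} \subsetneq \wtvla{}$ together with the third condition. Pick $i_0 \in I_\lambda \setminus J$. Since $I_\lambda$ is a union of Dynkin components meeting $\supp(\lambda)$, there is a path in the Dynkin diagram from $i_0$ to some $i_1 \in \supp(\lambda)$; walking along it, one finds (in the worst case after replacing $i_0$) that there is an index $i_0 \notin J$ adjacent to a vertex whose simple-root coefficient can be made positive, so that $\lambda - \alpha_{i_0} \in \wt V(\lambda)$ — indeed more directly, for $i_0\in I_\lambda$ with $i_0\notin J$ the weight $\lambda - \alpha_{i_0}$ is a weight of $V(\lambda)$ whenever $(\lambda,\alpha_{i_0}) \neq 0$, and if $(\lambda,\alpha_{i_0})=0$ one moves along the component to a neighbor to produce such a weight by iterating $s_j$. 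In any case one exhibits $\mu \in \wt V(\lambda)$ with $\mu \notin \lambda - Q_J^+$ (its expansion has a nonzero coefficient on some $\alpha_i$, $i\notin J$), so $\wtvla{J}\subsetneq\wtvla{}$. For the functional statement, note $\rhovla{J} = \rho_{\wtvla{J}}$ is $W_J$-invariant since $\wtvla{J}$ is $W_J$-stable, so $(\rhovla{J},\alpha_j)=0$ for $j \in J$; writing $\rhovla{J} = \sum_i a_i\omega_i$ this forces $a_j = 0$ for $j\in J$ and $a_i \geq 0$ for all $i$ (each $a_i$ is, up to a positive scalar, $(\rho_V{}_{,J},h_i)$ summed over a set symmetric enough to be $\geq 0$ — more carefully, $\rhovla{J}$ lies in the dominant cone because it is a positive sum of dominant weights minus elements of $Q_J^+$, and being $W_J$-fixed it lies in the face spanned by $\{\omega_i : i\notin J\}$). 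The maximizer of $(\rhovla{J},-)$ on $\wtvla{}$ is then $\conv$-dual to exactly $\wtvla{I_0}$ where $I_0 = \{i : a_i = 0\} \supseteq J$ (cf. the proof of \thmref{Tvin}); since $\lambda$ itself attains $(\rhovla{J},\lambda)$ and every weight $\mu = \lambda - \sum n_i\alpha_i$ gives $(\rhovla{J},\mu) = (\rhovla{J},\lambda) - \sum_{i\notin J} n_i a_i \leq (\rhovla{J},\lambda)$ with equality iff $n_i = 0$ for all $i$ with $a_i > 0$, the maximum is positive precisely when $\lambda \notin \ker(\rhovla{J},-)$, equivalently when some $a_i > 0$ with $\lambda$ having positive pairing, which happens iff $I_0 \neq I$ iff there is $i_0\in I_\lambda\setminus J$. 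Assembling: $I_\lambda\nsubseteq J \iff \wtvla{J}\subsetneq\wtvla{} \iff \max_{\wtvla{}}(\rhovla{J},-)>0$.

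The main obstacle I anticipate is the precise bookkeeping showing $\rhovla{J} \in P^+$ and, more delicately, identifying $I_0 = \{i : a_i = 0\}$ exactly with the largest set containing $J$ for which $\wtvla{I_0} = \wtvla{J}$ — this is essentially \thmref{T33}'s content and requires knowing that distinct faces $\wtvla{I_0}$ have distinct center-of-mass vectors, or else a direct argument that $a_i > 0$ for every $i \notin I_0$. I would handle this by the direct route: show that if $i \notin J$ and $i \in I_\lambda$, then $\wtvla{J}$ contains two weights differing by a positive multiple of a root involving $\alpha_i$ with opposite signs of the $\alpha_i$-coefficient relative to their average — equivalently, that $\rho_{\wtvla{J}}$ genuinely pairs positively with $\omega_i$ for such $i$ — which follows because $\wtvla{J}$, being $W_J$-invariant but not $W$-invariant, cannot have its center of mass fixed by $s_i$.
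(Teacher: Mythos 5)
Your outline is structurally parallel to the paper's (establish $\wtvla{J} = \wtvla{J\cap I_\lambda}$, show the contrapositive of the first implication, and finish by proving that $I_\lambda \nsubseteq J$ forces a positive maximum), and the first half — including the observation that $\rhovla{J} \in P^+$, hence the maximum is attained at $\lambda$ — is essentially correct. But the ``direct route'' you propose for the crucial last implication contains a false claim.

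You assert that for every $i \in I_\lambda \setminus J$ one has $(\rhovla{J}, \alpha_i) > 0$, on the grounds that ``$\wtvla{J}$, being $W_J$-invariant but not $W$-invariant, cannot have its center of mass fixed by $s_i$.'' That inference is not valid: a finite $W_J$-stable set that fails to be $W$-stable can perfectly well have its weight-sum annihilated by $(\,\cdot\,,\alpha_i)$ for some $i \notin J$. Concretely, take $\lie g$ of type $A_3$, $\lambda = \omega_1$, $J = \{3\}$. Then $I_\lambda = \{1,2,3\}$, $\wtvla{J} = \{\omega_1\}$, and $\rhovla{J} = \omega_1$, so $(\rhovla{J},\alpha_2) = 0$ even though $2 \in I_\lambda \setminus J$ and $\wtvla{J}$ is not $W$-invariant. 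So the claimed identification of $I_0 = \{i : a_i = 0\}$ does not hold in the form you state. (Your alternative fallback, appealing to the fact that distinct $\wtvla{I_0}$ have distinct center-of-mass vectors, is Theorem~\ref{T33}, which the paper proves \emph{after} this Proposition and using it — so that route is circular here.)

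The paper closes the gap differently: it does not try to show $(\rhovla{J},\alpha_i)>0$ for \emph{all} $i \in I_\lambda\setminus J$, only that \emph{some} $j_0$ in a component $I_j \subset I_\lambda$ with $I_j \nsubseteq J$ satisfies $(\rhovla{J},\alpha_{j_0}) > 0$. This is done by a two-case analysis on whether $I_j$ meets $J_1 := \supp\bigl(|\wtvla{J}|\lambda - \rhovla{J}\bigr)$: if not, pick $j_0 \in I_j \cap \supp(\lambda)$ and the pairing is $|\wtvla{J}|(\lambda,\alpha_{j_0}) > 0$; if so, pick $j_0 \in I_j \setminus J_1$ adjacent to some $i_0 \in J_1$, and the off-diagonal Cartan entry gives a strictly positive contribution $-m_{i_0}(\alpha_{i_0},\alpha_{j_0})$. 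Combined with $\lambda = \sum_{i\in I_\lambda} a_i\alpha_i$ (all $a_i > 0$, Humphreys Ex.~13.8 applied componentwise) and $(\rhovla{J},\alpha_i) \geq 0$ for all $i$ (dominance of $\rhovla{J}$), one gets $(\rhovla{J},\lambda) \geq a_{j_0}(\rhovla{J},\alpha_{j_0}) > 0$. You should replace your ``direct route'' with an argument of this kind (or, alternatively, note that $\rhovla{J} = 0$ forces $|\wtvla{J}|\lambda \in Q^+_{J\cap I_\lambda}$, which by linear independence of the simple roots and $\lambda = \sum_{i\in I_\lambda}a_i\alpha_i$ with all $a_i>0$ would force $I_\lambda \subseteq J$).
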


\begin{proof}
We first make the following claim:
\begin{equation}\label{E51}
\wtvla{J} = \wtvla{J \cap I_\lambda}.
\end{equation}

Let us show the claim first. Clearly $\wtvla{J \cap I_\lambda} \subset
\wtvla{J}$. Next, suppose $\mu = \lambda - \sum_i a_i \alpha_i$ is any
weight of $V(\lambda)$. Then there is some $f \in U(\lie{n}^-)_{\mu -
\lambda}$ such that $f v_\lambda$ is a nonzero weight vector.
Since $U(\lie{n}^-)$ is the subalgebra of $U(\lie{g})$ generated by the
$x_{\alpha_i}^-$ ($i \in I$), write $f$ as a $\Bbb{C}$-linear combination
of monomial words (each of weight $\mu - \lambda$). Then at least one
such monomial word $x_{\alpha_{i_k}}^- \dots x_{\alpha_{i_2}}^-
x_{\alpha_{i_1}}^-$ does not kill any highest weight vector $0 \neq
v_\lambda \in V(\lambda)_\lambda$.

The claim is proved if we show that $a_i = 0\ \forall i \notin
I_\lambda$. Suppose not. Then there exists $1 \leq j \leq k$ such that
$i_j \notin I_\lambda$. Choose the minimal such $j$. Also note that
$x_{\alpha_{i_j}}^- \dots x_{\alpha_{i_1}}^- v_\lambda \neq 0$. Now since
$x_{\alpha_{i_j}}^-$ commutes with $x_{\alpha_{i_l}}^-$ for all $0<l<j$
(by the defining relations), we get:
\[ x_{\alpha_{i_{j-1}}}^- \dots x_{\alpha_{i_1}}^- (x_{\alpha_{i_j}}^-
v_\lambda) \neq 0, \]

\noindent whence $x_{\alpha_{i_j}}^- v_\lambda \neq 0$. Then, this is
a nonzero weight vector in the simple module $V(\lambda)$ of weight
$\lambda - \alpha_{i_j} \neq \lambda$, so this vector cannot be maximal
either; i.e., it is not killed by all of $\lie n^+$. Now $\lie n^+$ is
generated by $\{ x_{\alpha_i}^+ : i \in I \}$. For $i \neq i_j$,
$x_{\alpha_i}^+$ commutes with $x_{\alpha_{i_j}}^-$, so
\[ x_{\alpha_i}^+ (x_{\alpha_{i_j}}^- v_\lambda) = x_{\alpha_{i_j}}^-
\cdot x_{\alpha_i}^+ v_\lambda = 0. \]

\noindent Hence we must have: $x_{\alpha_{i_j}}^+ \cdot
x_{\alpha_{i_j}}^- v_\lambda \neq 0$. The left-hand side equals
$\lambda(h_{\alpha_{i_j}}) v_\lambda$ by standard computations, so
$\lambda(h_{\alpha_{i_j}}) \neq 0$. However, this is a contradiction
since $\lambda(h_{\alpha_i}) = 0$ for all $i \not\in I_{\lambda}$. Thus
the claim is proved, and $a_i = 0\ \forall i \not\in
I_{\lambda}$.\medskip

We are now ready to prove the result.
We first show two of the cyclic implications (more precisely, we show
their contrapositives). If $J \supset I_\lambda$, then by \eqref{E51},
\[ \wtvla{J} = \wtvla{J \cap I_\lambda} = \wtvla{I_\lambda} = \wtvla{I} =
\wtvla{}, \]

\noindent and we are done. Next, $\rhovla{I} = \rho_V = 0$ (see the proof
of \thmref{T2}), so we have: $\max_{\wtvla{}} \rhovla{I}$ $= \max 0 = 0$.

Finally, suppose $I_\lambda \nsubseteq J$; we prove that $\max_{\wtvla{}}
\rhovla{J} > 0$.
Since each weight is in $\lambda - Q^+$, $\rhovla{J} = |\wtvla{J}|
\lambda - \sum_{j \in J_1} m_j \alpha_j$ for some positive integers $m_j$
and some subset $J_1 \subset J$.
Since $I_\lambda \nsubseteq J$, there exists a graph component $I_j
\subset I_\lambda$ in the Dynkin diagram for $\lie{g}$, such that $I_j
\nsubseteq J$. We first show the following\smallskip

\noindent {\bf Claim.} There exists $j_0 \in I_j \subset I_\lambda$, such
that $(\rhovla{J}, \alpha_{j_0}) > 0$.

\begin{proof}
We have two cases. First, suppose that $I_j \cap J_1 = \emptyset$. Since
$\supp(\lambda) \cap I_j \neq \emptyset$, choose $j_0 \in I_j$ such that
$(\lambda, \alpha_{j_0}) > 0$. Now since $J_1 \cap I_j = \emptyset$, we
also have $(\alpha_i, \alpha_{j_0}) = 0\ \forall i \in J_1$. Then
$(\rhovla{J}, \alpha_{j_0}) = |\wtvla{J}| (\lambda, \alpha_{j_0})
> 0$.

On the other hand, if $I_j \cap J_1 \neq \emptyset$, then since $I_j$ is
connected, choose $j_0 \in I_j \setminus J_1$, that is adjacent to at
least one element $i_0 \in J_1$. Now
\[ (\rhovla{J}, \alpha_{j_0}) = |\wtvla{J}| (\lambda, \alpha_{j_0}) -
\sum_{j \in J} m_j (\alpha_j, \alpha_{j_0}) \geq 0 + \sum_{j \in J_1
\setminus \{ i_0 \} } m_j \cdot 0 - m_{i_0} (\alpha_{i_0}, \alpha_{j_0}),
\]

\noindent and this is strictly positive because $i_0, j_0$ are connected
by an edge in $I_j$.
\end{proof}

Returning to the proof of the result, since $\lambda = \sum_{i \in
\supp(\lambda)} (\lambda,\alpha_i) \omega_i$, $\lambda = \sum_{i \in
I_\lambda} a_i \alpha_i$ with all $a_i \in \Bbb{Q}_{>0}$ by
\cite[Exercise 13.8]{H}. We now compute:
\[ \max_{\wtvla{}} \rhovla{J} \geq (\rhovla{J}, \lambda) = \sum_{i \in
I_\lambda} a_i (\rhovla{J}, \alpha_i) \geq a_{j_0} (\rhovla{J},
\alpha_{j_0}) > 0. \]
\end{proof}

We now show another of our main results. We need some more notation.

\begin{defn}
Define, for any $J \subset I$,
\[ \Delta_J := \{ \alpha_j : j \in J \}, \quad \Delta := \Delta_I, \quad
\Omega_J := \{ \omega_j : j \in J \}, \quad \Omega := \Omega_I. \]

\noindent Now given $X \subset \liehr^*$, define $X(\lambda)$ to be:
\[ X(\lambda) := \{ x \in X : (x, \lambda) \geq (x', \lambda)\ \forall x'
\in X \} \subset X. \]
\end{defn}

\begin{rem}
It is not hard to show that $X(\lambda)$ is a weak $\F$-face of $X$ for
all $\lambda$ and all $\F$, and that if $\lambda(x) > 0$ for some $x \in
X$, then $X(\lambda)$ is a positive weak $\F$-face.
\end{rem}

\begin{proof}[Proof of \thmref{T32}]
By \thmref{T2} and \lemref{L2}, (iii) $\implies$ (ii). By \thmref{Tvin}
for $J = J_\lambda = I$, (ii) $\implies$ (i). It remains to show that (i)
$\implies$ (iii) (and the second part of the theorem). Since $w$ acts
linearly on $\liehr^*$ and $(,)$ is $W$-invariant, it suffices to prove
that (i) $\implies$ (iii) for $w=1$.

We now show that $\wtvla{J} = (\wtvla{})(\rhovla{J})$. First,
$\wtvla{J}$ is $W_J$-stable, hence so is $\rhovla{J}$. But then
$(\rhovla{J}, \alpha_i) = 0\ \forall i \in J$, whence
$(\rhovla{J},-)$ is constant on $\wtvla{J}$. Next, that the maximum
value is positive for proper subsets $\wtvla{J}$ was shown in
\propref{P51}.

Now let us suppose, as in the proof of \propref{P51}, that
$\rhovla{J} = |\wtvla{J}| \lambda - \sum_{j \in J_1} m_j \alpha_j$
for positive $m_j \in \Bbb{Z}$ and some $J_1 \subset J$. Thus, $\wtvla{J}
= \wtvla{J_1}$. Now if $i \notin J, j \in J_1$, then $(\alpha_j,
\alpha_i) \leq 0$ (since $J_1 \subset J$), so $(\rhovla{J}, \alpha_i)
\geq 0$ since $\lambda \in P^+$. In particular, $\rhovla{J} \in P^+$ from
above. In turn, this implies that $\lambda \in (\wtvla{})(\rhovla{J})$,
and from the previous paragraph, we conclude: $\wtvla{J} \subset
(\wtvla{})(\rhovla{J})$.

Now suppose $\nu \in \wtvla{}$ maximizes $\rhovla{J}$. We need to
show that $\nu \in \wtvla{J}$. We write $\nu = \lambda - \sum_{i \in I}
r_i \alpha_i$ for $r_i \in \zz$, and compute:
\[ (\rhovla{J}, \lambda) = (\rhovla{J}, \nu) =
(\rhovla{J}, \lambda) - \sum_{i \notin J_1} r_i (\rhovla{J},
\alpha_i) \leq (\rhovla{J}, \lambda), \]

\noindent since $(\rhovla{J}, \Delta_{J_1}) = 0$ from above. Now
define $J_2 := \{ i \notin J_1 : r_i > 0 \}$. The preceding equation
implies that $(\rhovla{J}, \alpha_i) = 0\ \forall i \in J_2$, so,
\[ |\wtvla{J}| (\lambda, \alpha_i) - \sum_{j \in J_1} m_j (\alpha_j,
\alpha_i) = 0. \]

\noindent Since $m_j > 0\ \forall j \in J_1$, $(\lambda,\alpha_i) =
(\alpha_j, \alpha_i) = 0\ \forall i \in J_2, j \in J_1$.

Now let $w_2$ be the longest element of the subgroup $W_{J_2}$ of $W$.
Consider $w_2(\nu) \in \wtvla{}$, where $\nu = \lambda - \sum_{j\in
J_1} r_j \alpha_j - \sum_{i \in J_2} r_i\alpha_i$. By the previous
paragraph, $w_2(\lambda) = \lambda$ and $w_2(\alpha_j) = \alpha_j\
\forall j \in J_1$ - and by its definition, $w_2(\alpha_i) \in
-\Delta_{J_2}\ \forall i \in J_2$.
Since $r_i \neq 0$ for $i \in J_2$, this gives $w_2(\nu) \notin \lambda -
\zz \Delta$ unless $J_2 = \emptyset$. Thus, $J_2 = \emptyset$, and $\nu
\in \wtvla{J_1} = \wtvla{J}$. Hence $(\wtvla{})(\rhovla{J}) =
\wtvla{J}$, and the theorem is proved.
\end{proof}

\begin{rem}
At this point, we note that \thmref{T32} does not hold for general
finite-dimensional $\lie{g}$-modules. For example, let $\lie{g}$ be of
type $A_2$, and consider the module $V = V(2\omega_2) \oplus V(\omega_1 +
\omega_2)$. It is easy to see that $\{\omega_1+\omega_2\}$ is a weak
$\F$-face of $\wt V$ for all $\F$. However, the subset of $\wt V$ that
maximizes the linear functional $(\omega_1+\omega_2, -)$ is the subset
$\{\omega_1 + \omega_2, 2\omega_2\}$.
\end{rem}

We now show a small result that helps classify {\it all} maximizer
subsets inside $\wtvla{}$, for $0 \neq \lambda \in P^+$. Given any
$\varphi \in \liehr^*$, the nondegeneracy of the Killing form implies
that $\varphi = (\nu,-)$, and there exists $w_\nu \in W$ such that
$w_\nu(\nu)$ is in the dominant Weyl chamber, i.e., in $\Bbb{R}_+
\Omega$.

\begin{lem}\label{L3}
Fix $0 \neq \lambda \in P^+$. Then for all $\nu \in \liehr^*$,
\[ (\wtvla{})(\nu) = w_\nu^{-1}(\wtvla{I \setminus \supp(w_\nu(\nu))}),
\]

\noindent and this map from $\liehr^*$ to the weak $\F$-faces of
$\wtvla{}$ is surjective:
\[ w(\wtvla{J}) = (\wtvla{})(w(\nu))\ \forall \nu \in \Bbb{R}_{>0}
\Omega_{I \setminus J}. \]
\end{lem}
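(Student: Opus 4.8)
\textbf{Proof proposal for Lemma~\ref{L3}.}
The plan is to reduce the general statement to the dominant case already handled in \thmref{T32} by absorbing the Weyl group element $w_\nu$. First I would fix $0 \neq \lambda \in P^+$ and an arbitrary $\nu \in \liehr^*$, and set $\mu = w_\nu(\nu)$, so $\mu \in \R_+\Omega$, i.e.\ $(\mu, \alpha_i) \geq 0$ for all $i \in I$. Writing $I_0 := I \setminus \supp(\mu)$, the first claim to establish is $(\wtvla{})(\mu) = \wtvla{I_0}$. For this I would argue exactly as in the proof of \thmref{T32}: the functional $(\mu,-)$ is $W_{I_0}$-invariant (since $(\mu,\alpha_i)=0$ for $i\in I_0$), hence constant on the $W_{I_0}$-stable set $\wtvla{I_0}$; and since $\mu = \sum_{i\in\supp(\mu)}(\mu,\alpha_i)\omega_i$ with strictly positive coefficients on $\supp(\mu)$, and $\lambda-\wtvla{} \subset Q^+$, the highest weight $\lambda$ lies in $(\wtvla{})(\mu)$, giving $\wtvla{I_0}\subset(\wtvla{})(\mu)$. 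For the reverse inclusion I would take $\nu' \in (\wtvla{})(\mu)$, write $\nu' = \lambda - \sum_i r_i\alpha_i$ with $r_i \in \zz$, and observe that maximality forces $r_i = 0$ whenever $(\mu,\alpha_i)>0$; a short argument (as in \thmref{T32}, or by noting that $(\mu, \lambda-\nu')=0$ with all summands $\geq 0$) then pushes this to $r_i = 0$ for all $i \notin I_0$, so $\nu' \in \wtvla{I_0}$.

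Next I would transport this across $w_\nu$. Since $\wtvla{}$ is $W$-stable and $(,)$ is $W$-invariant, for any $\eta \in \wtvla{}$ we have $(\nu,\eta) = (w_\nu(\nu), w_\nu(\eta)) = (\mu, w_\nu(\eta))$; hence $\eta$ maximizes $(\nu,-)$ on $\wtvla{}$ if and only if $w_\nu(\eta)$ maximizes $(\mu,-)$ on $\wtvla{}$. This says precisely $w_\nu\big((\wtvla{})(\nu)\big) = (\wtvla{})(\mu) = \wtvla{I_0}$, i.e.\ $(\wtvla{})(\nu) = w_\nu^{-1}(\wtvla{I\setminus\supp(w_\nu(\nu))})$, which is the first displayed equation.

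For surjectivity, let $J \subset I$ and $\nu \in \R_{>0}\Omega_{I\setminus J}$ be given, and let $w \in W$ be arbitrary. I want $w(\wtvla{J}) = (\wtvla{})(w(\nu))$. Apply the first part with $w(\nu)$ in place of $\nu$: I may take $w_{w(\nu)}$ to be any Weyl element carrying $w(\nu)$ into the dominant chamber; since $\nu$ itself is already dominant (it lies in $\R_{>0}\Omega_{I\setminus J} \subset \R_+\Omega$) with $\supp(\nu) = I\setminus J$, I can choose $w_{w(\nu)} = w^{-1}$, giving $w_{w(\nu)}(w(\nu)) = \nu$ and $\supp(\nu) = I\setminus J$. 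Then the first displayed equation reads $(\wtvla{})(w(\nu)) = w\big(\wtvla{I\setminus\supp(\nu)}\big) = w(\wtvla{J})$, as desired. Since every weak $\F$-face of $\wtvla{}$ has, by \thmref{T2} and \thmref{T32}, the form $w(\wtvla{J})$ for some $w\in W$, $J\subset I$, and since $\R_{>0}\Omega_{I\setminus J}$ is nonempty for every $J$, this exhibits each weak $\F$-face as $(\wtvla{})(\nu)$ for a suitable $\nu$, proving surjectivity.

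The main obstacle I anticipate is the reverse inclusion $(\wtvla{})(\mu)\subset\wtvla{I_0}$ in the first step: getting from ``$r_i=0$ for $i\in\supp(\mu)$'' to ``$r_i=0$ for all $i\notin I_0$'' is not automatic and is exactly the kind of propagation argument (using connectedness of Dynkin components relative to $\supp(\lambda)$, the longest-element trick in $W_{J_2}$, or the structural results of \propref{P51} and \thmref{T32}) that occupies the bulk of the proof of \thmref{T32}; I would likely just invoke \thmref{T32} directly here rather than re-derive it, since the case $w=1$ of \thmref{T32}(i)$\Leftrightarrow$(iii) gives $\wtvla{J} = (\wtvla{})(\rhovla{J})$ and one checks $\supp$-compatibility of $\rhovla{J}$ with $I\setminus J$ modulo the $I_\lambda$ subtleties already catalogued in \propref{P51}.
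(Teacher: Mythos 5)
Your argument is correct and follows essentially the same route as the paper's proof: reduce to dominant $\nu$ via $W$-equivariance of the maximizer map, then compute $(\nu,\mu)=(\nu,\lambda)-\sum_i a_i b_i$ with $a_i b_i\ge 0$ to read off the maximizer, and then observe surjectivity because $\supp$ of a dominant vector can be any subset of $I$.

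One correction to your self-assessment: the ``main obstacle'' you anticipate does not exist. You set $I_0 := I\setminus\supp(\mu)$, and for dominant $\mu=\sum_i a_i\omega_i$ one has $(\mu,\alpha_i)>0 \iff a_i>0 \iff i\in\supp(\mu) \iff i\notin I_0$. So ``$r_i=0$ whenever $(\mu,\alpha_i)>0$'' \emph{is} ``$r_i=0$ for all $i\notin I_0$''; there is nothing further to ``push,'' and your option (b) (expand $(\mu,\lambda-\nu')=0$ as a sum of nonnegative terms) already finishes the reverse inclusion in one line. The longest-element trick in the proof of \thmref{T32} is needed there because one parametrizes the face by $J\subset I$, and $\rhovla{J}$ may vanish on simple roots outside $J$ (so $I\setminus\supp(\rhovla{J})$ can strictly contain $J$, e.g.\ when $I_\lambda\ne I$); that mismatch is absent here precisely because $I_0$ is \emph{defined} to be $I\setminus\supp(\mu)$. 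Consequently you should not, and need not, ``just invoke \thmref{T32} directly'' for this inclusion — the direct computation you already gave is the complete argument, and it is what the paper does.
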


\noindent In particular, $w(\wtvla{J}) = (\wtvla{})(w(\rho_{I \setminus
J}))\ \forall w,J$. Moreover, Theorem \ref{T33} helps determine the answer
to the question: For which (dominant) $\mu,\nu$ are the maximizer sets
the same?

\begin{proof}
First observe that since $(,)$ is $W$-invariant and $\wtvla{}$ is
$W$-stable,
\[ w(\wtvla{}(\nu)) = (\wtvla{})(w(\nu))\ \forall w \in W, \nu \in
\liehr^*. \]

Thus, it is enough to show the first claim for dominant $\nu$ (and $w_\nu
= 1$). Now, if $\nu = \sum_i a_i \omega_i$ with $a_i \geq 0\ \forall
i$ and $\mu = \lambda - \sum_{j \in I} (2b_j/(\alpha_j, \alpha_j))
\alpha_j$ with $b_j \geq 0\ \forall j$, then
\[ (\nu,\mu) = (\nu, \lambda) - \sum_{i,j \in I} a_i b_j \frac{2
(\omega_i,\alpha_j)}{(\alpha_j, \alpha_j)} = (\nu,\lambda) - \sum_{i \in
I} a_i b_i \leq (\nu,\lambda), \]

\noindent with equality if and only if $a_i b_i = 0\ \forall i$. This
precisely means that given $\nu$, we must have $b_i = 0\ \forall i \in
\supp(\nu)$, whence we arrive at $\wtvla{I \setminus \supp(\nu)}$.
Conversely, given that $\wtvla{J}$ is the maximizer (once again ignoring
the $w \in W$), we should have $a_i = 0\ \forall i \in J$, whence
$\supp(\nu) = I \setminus J$.
\end{proof}

\subsection{}

It remains to show the last result. Once again, we need some
preliminaries before proving it. Recall the definition of $\rho_Y$ from
\thmref{T32}.

\begin{prop}\label{P53}
Suppose $0 \neq \lambda \in P^+$.
\begin{enumerate}
\item Suppose $Y$ is a $W_J$-stable subset of $\wtvla{J}$ for some fixed
$J \subset I$. Then $|Y| \rhovla{J} = |\wtvla{J}| \rho_Y$.

\item The only $W_J$-invariant vector inside the face
$\conv_\R(\wtvla{J})$ is $\frac{1}{|\wtvla{J}|} \rhovla{J}$, which
is the center of the face.
\end{enumerate}
\end{prop}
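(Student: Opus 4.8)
\textbf{Plan for the proof of Proposition \ref{P53}.}
The two parts are closely linked, and I would prove them together. For part (1), the idea is that averaging over the Weyl group $W_J$ converts a sum over a $W_J$-stable set into a scalar multiple of the "center of mass" vector. Concretely, since $Y$ and $\wtvla{J}$ are both $W_J$-stable, the orbit sums $\rho_Y = \sum_{y \in Y} y$ and $\rhovla{J} = \sum_{\mu \in \wtvla{J}} \mu$ are both fixed by $W_J$ (as already exploited in the proof of \thmref{T2} and \thmref{T32}). The plan is to decompose $Y$ into $W_J$-orbits $Y = \bigsqcup_k O_k$ and $\wtvla{J}$ into $W_J$-orbits $\wtvla{J} = \bigsqcup_\ell O'_\ell$, and observe that for any single $W_J$-orbit $O$, the orbit sum $\sum_{\mu \in O} \mu$ equals $|O| \cdot \pi(\mu_0)$ where $\pi$ is orthogonal projection onto the $W_J$-fixed subspace and $\mu_0$ is any representative — this is because the $W_J$-average of any vector is its projection onto the fixed space, and all elements of an orbit have the same projection. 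Summing, $\rho_Y = |Y| \cdot c_Y$ and $\rhovla{J} = |\wtvla{J}| \cdot c$, where $c_Y, c$ are the respective centers of mass (averages). The heart of the matter is then to show $c_Y = c$, i.e. that the center of mass of any $W_J$-stable subset of $\wtvla{J}$ is independent of the subset.

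For that last point — which I expect to be the main obstacle, though it is not hard — I would argue that $c_Y$ lies in the $W_J$-fixed subspace $(\liehr^*)^{W_J}$, and that the intersection of this fixed subspace with the affine span of $\wtvla{J}$ is a single point. Indeed every weight in $\wtvla{J} = \wt V(\lambda) \cap (\lambda - Q_J^+)$ lies in the coset $\lambda - Q_{J,\R}$ (the real span of $\{\alpha_j : j \in J\}$), so any convex combination of them — in particular $c_Y$ and $c$ — lies in the affine subspace $\lambda + Q_{J,\R}$. On the other hand, $Q_{J,\R}$ is a $W_J$-stable subspace that contains no nonzero $W_J$-fixed vector: a $W_J$-fixed vector is orthogonal to every $\alpha_j$ with $j \in J$, while $Q_{J,\R} = \operatorname{span}\{\alpha_j : j \in J\}$, and the form is positive definite, so $(\liehr^*)^{W_J} \cap Q_{J,\R} = 0$. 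Hence the affine subspace $(\lambda + Q_{J,\R}) \cap (\liehr^*)^{W_J}$ consists of at most one point. Since both $c_Y$ and $c = \frac{1}{|\wtvla{J}|}\rhovla{J}$ lie in it, they coincide, giving $c_Y = c$ and hence $|Y| \rhovla{J} = |Y||\wtvla{J}| c = |\wtvla{J}| \rho_Y$, which is part (1).

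Part (2) is then almost immediate from the same linear-algebra fact. Any $W_J$-invariant vector $v$ inside $\conv_\R(\wtvla{J})$ lies in $\lambda + Q_{J,\R}$ (being a convex combination of elements of $\wtvla{J}$) and in $(\liehr^*)^{W_J}$ (being $W_J$-invariant), so by the uniqueness just established $v$ must equal the unique point of that intersection — which is $c = \frac{1}{|\wtvla{J}|}\rhovla{J}$, the center of mass, since we already checked $c$ lies in both sets. The only thing left to remark is that $\conv_\R(\wtvla{J})$ really is a face of $\conv_\R(\wt V(\lambda))$ and $c$ really is its barycenter, but both are already available: the face property follows from \thmref{Tvin} (with $I_0 = J$, $w = 1$), and calling $c$ the "center" is just the definition of barycenter of the finite point set $\wtvla{J}$. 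I would close by noting that the apparently delicate hypothesis that $Y$ be $W_J$-stable (rather than merely a subset) is exactly what is needed to force $c_Y$ into the fixed subspace — without it the conclusion fails.
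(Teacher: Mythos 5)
Your proof is correct. The underlying fact is the same as the paper's — positive definiteness of the form on $Q_{J,\R}$, which the paper phrases as nonsingularity of the symmetrized Cartan submatrix indexed by $J$ — but your framing is tidier. The paper writes $\rho_Y = |Y|\lambda - \sum_{j\in J} a_j\alpha_j$, imposes $(\rho_Y,\alpha_i)=0$ for $i \in J$, and argues that the resulting linear system has a unique solution for $(a_j/|Y|)_j$ because the coefficient matrix is the symmetrized Cartan matrix; you instead observe that $(\liehr^*)^{W_J}\cap Q_{J,\R}=0$, so the affine slice $(\lambda + Q_{J,\R}) \cap (\liehr^*)^{W_J}$ is at most a point. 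These are literally the same statement (unique solution of the system $\Leftrightarrow$ the affine slice is a singleton), but your version is coordinate-free. Where you actually diverge is in part (2): the paper deduces (2) from (1) by averaging the convex combination over $W_J$ and decomposing into orbit sums, whereas you bypass (1) entirely by noting that any $W_J$-invariant point of $\conv_\R(\wtvla{J})$ already lies in the same affine-slice-intersection and so must equal the center. Your route to (2) is shorter and makes the logical dependence of (2) on (1) unnecessary, which is a genuine simplification. Two small remarks: the orbit-projection discussion in your first paragraph is not actually used (the $W_J$-invariance of $\rho_Y$ is immediate from $W_J$-stability of $Y$ and linearity); and your argument tacitly assumes $Y\neq\emptyset$ so that $c_Y$ is defined, though the empty case of (1) is the trivial identity $0=0$, as the paper also notes by clearing denominators.
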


\begin{proof}
The second part follows from the first, since if $x \in
\conv_\R(\wtvla{J})$ is $W_J$-invariant, then $x = \sum_i a_i y_i$ for
$y_i \in \wtvla{J}$ and $a_i \in (0,1)$ (and $\sum_i a_i = 1$). However,
\[ x = \frac{1}{|W_J|} \sum_{w \in W_J,\ i} a_i w(y_i), \]

\noindent whence $x$ is an $\Bbb{R}_+$-linear combination of $\rho_{Y_j}$
for distinct $W_J$-orbits $Y_j \subset \wtvla{J}$. Let us write this as:
$x = \sum_j b_j (\frac{1}{|Y_j|} \rho_{Y_j})$, with $\sum_j b_j = 1$
(because the coefficients above added up to 1). Using this and the first
part, we then get
\[ x = \sum_j b_j \frac{1}{|\wtvla{J}|} \rhovla{J} =
\frac{1}{|\wtvla{J}|} \rhovla{J}. \]

It remains to show the first part.
First, if $Y \subset \wtvla{J}$ is $W_J$-stable (and nonempty), then
$\rho_Y$ is fixed by $W_J$ since every $w \in W_J$ permutes $Y$.
Now, write $\rho_Y = |Y| \lambda - \sum_{j \in J} a_j \alpha_j$,
for some $a_j \in \zz$. Then, since $\rho_Y$ is $W_J$-invariant, we
get: $(\rho_Y, \alpha_j) = 0\ \forall j \in J$, which gives us a system
of $|J|$ linear equations in the $|J|$ variables $\{ a_j / |Y| \}$ -
namely,
\[ \sum_{j \in J} (a_j / |Y|) (\alpha_j, \alpha_i) = (\lambda, \alpha_i)\
\forall i \in J. \]

\noindent We now claim that the coefficients of the $a_j / |Y|$ are
precisely the entries of the ``symmetrized" Cartan matrix for $\lie{g}$,
in the rows and columns corresponding to $J \subset I$. But all principal
minors of a symmetrized Cartan matrix of finite type are positive, so
this matrix is nonsingular, which gives a unique (rational) solution to
the above system. The uniqueness implies that if we start with
$\rhovla{J} = |\wtvla{J}| \lambda - \sum_{i \in J} a'_i \alpha_i$, we
would get: $a'_i / |\wtvla{J}| = a_i / |Y|\ \forall i \in J$.
Thus, $\lambda - (1/|\wtvla{J}|) \rhovla{J} = \lambda - (1/|Y|)
\rho_Y$, and we are done. (Clearing the denominator of $|Y|$ also enables
us to include the case when $Y$ is the empty set, and $\rho_Y = 0$.)
\end{proof}

We conclude this paper with the proof of our last main result.

\begin{proof}[Proof of Theorem \ref{T33}]
If $\wtvla{I_1} = \wtvla{I_2}$, then the half-sums of all the elements are
clearly equal too: $\rhovla{I_1} = \rhovla{I_2}$. Conversely,
if $\rhovla{I_1} = \rhovla{I_2}$, then by \thmref{T32},
\[ \wtvla{I_1} = (\wtvla{})(\rhovla{I_1}) =
(\wtvla{})(\rhovla{I_2}) = \wtvla{I_2}. \]

Next, if $W_{I_1}(\lambda) = W_{I_2}(\lambda)$, then, since
$W_{I_i}(\lambda) \subset \wtvla{I_i}$ are $W_{I_i}$-stable (for
$i=1,2$), applying \propref{P53} twice gives
\[ \frac{1}{|\wtvla{I_1}|} \rhovla{I_1} =
\frac{1}{|W_{I_1}(\lambda)|} \sum_{x \in W_{I_1}(\lambda)} x =
\frac{1}{|W_{I_2}(\lambda)|} \sum_{x \in W_{I_2}(\lambda)} x =
\frac{1}{|\wtvla{I_2}|} \rhovla{I_2}. \]

Hence, $\rhovla{I_2} \in \Bbb{Q}_{>0} \rhovla{I_1}$, and
their maximizer subsets in $\wtvla{}$ coincide. By \thmref{T32},
$\wtvla{I_1} = \wtvla{I_2}$.

It remains to show the converse. Suppose that $\wtvla{I_1} =
\wtvla{I_2}$. Recall that these sets of weights are precisely the weights
of the modules $\bu(\lie g_{I_1})v_{\lambda}$ and $\bu(\lie
g_{I_2})v_{\lambda}$, respectively, where $0 \neq v_{\lambda}$ is a
highest weight vector of $V(\lambda)$.

Consider $\conv_{\R}(\wtvla{I_j})$ as the weight polytope of $\bu(\lie
g_{I_j})v_{\lambda}$ for $j = 1,2$. Since $\lie g_{I_1}$ and $\lie
g_{I_2}$ are both semisimple, we can apply \thmref{Tvin} to these
polytopes. In particular, we see that the set of vertices of
$\conv_{\R}(\wtvla{I_j})$ is precisely $W_{I_j}(\lambda)$. Since
$\wtvla{I_1} = \wtvla{I_2}$, these polytopes are equal, so they must have
the same vertices; i.e., $W_{I_1}(\lambda) = W_{I_2}(\lambda)$.
\end{proof}

\end{document}